    \pgfplotsset{compat=newest}
\newtheorem{theorem}{Theorem}[section]
\newtheorem{proposition}{Proposition}[section]
\newtheorem{lemma}{Lemma}[section]
\numberwithin{equation}{section}
\numberwithin{figure}{section}
\newcommand{\ve}[1]{{\boldsymbol {#1}}}
\newcommand{\R}{\mathbb R}
\providecommand{\keywords}[1]
{
  \small	
  \textbf{\textit{Keywords---}} #1
}
\title
{Random matching in 2D with exponent 2 for gaussian densities}
\author{ Emanuele Caglioti$\,\,^*$,  Francesca Pieroni
  \thanks{Electronic address: \texttt{caglioti@mat.uniroma1.it, francesca.pieroni@uniroma1.it}}}
\affil{Dipartimento di Matematica \\Sapienza Universit\`a di Roma}
\begin{document}
  \maketitle

\begin{abstract} 
We solve the Random Euclidean Matching problem with exponent 2 for the Gaussian distribution defined on the plane. 
Previous works by Ledoux and Talagrand determined the leading behavior of the average cost up to a multiplicative constant. 
We explicitly determine the constant, showing that the average cost is proportional to $(\log\, N)^2,$ where $N$ is the number of points. Our approach relies on a geometric decomposition allowing an explicit computation of the constant. Our results illustrate the potential for exact solutions of random matching problems for many distributions defined on unbounded domains on the plane.
 \end{abstract}

\keywords{
Euclidean matching - Optimal transport - Monge-Amp\`ere equation\\ Empirical
measures
}
\section*{\small{Acknowledgement}}
This work has been partially supported by the grant \enquote{Progetti di ricerca di Ateneo
2019} by Sapienza University, Rome and  by PNRR MUR project PE0000013-FAIR.
\newpage
\tableofcontents

\newpage
  \section{Introduction}
\label{sez:intro}
Here we consider the Random Euclidean Matching problem with exponent 2 for some unbounded distribution in the plane.
The case of unbounded distributions is important both from a mathematical and an applied point of view.

In particular we consider the case of the Gaussian, the case of the Maxwellian and we give some hints on the case of other unbounded exponentially decaying densities.

\vskip.3truecm
Random Euclidean Matching is a combinatorial optimization problem in which $N$ red points and $N$ blue points extracted independently from the same probability distribution are paired, a red point with a single blue point and vice versa, in order to minimize the sum of the distances (the distance raised to a certain power) between the points.
The problem is equivalent to calculating the Wasserstein distance between the two empirical measures associated with the points or, which is the same, finding the optimal transport between the two empirical measures.
\vskip.3truecm
Apart for the great mathematical interest in the subject, in the last years applications of Matching and Optimal Transport in statistical learning and more in general in applied mathematics has enormously increased.
In particular we refer to  \cite{CP},\cite{BLG},\cite{FG} and references therein for applications of Optimal Transport to statistical learning, 
  computer graphics,  image processing, shape analysis, pattern recognition, particle systems and more.
For general reviews on Optimal Transport we refer to \cite{V} and with a more applicative aim to \cite{Sa}, while for a general review of results and tools and methods in probability, with applications to the matching problem we refer the reader to \cite {Treview}.
\vskip.3truecm
Coming to the mathematical problem, recently there has been a lot of activity on this topic because of some sharp progress, starting from a conjecture by Caracciolo et al., which shows how the problem can be rephrased in terms of PDE. 
\vskip.3truecm
Let $\mu$ be a probability distribution defined on $\Lambda\subset \R^2.$
Let us consider two sets $X^N
=\{X_i\}_{i=1}^N$ and 
$Y^N = \{Y_i\}_{i=1}^N$
of $N$ points independently sampled from the distribution $\mu$.
The Euclidean Matching problem with exponent $2$ consists in finding
the matching $i\to \pi_i$, i.e. the permutation $\pi$
of $\{1,\dots N\}$ which minimizes
the sum of the squares of the distances between $X_i$ and
$Y_{\pi_i}$,
that is
\begin{equation}
  \label{eq:cn}
  C_N(X^N,Y^N):= \min_{\pi}\sum_{i=1}^N
  |X_i-Y_{\pi_i}|^2.
\end{equation}
The cost defined above can be seen, but for a
constant factor $N,$ as the square of the 2-Wasserstein distance
between two probability measures.
In fact, the $p-$Wasserstein distance $W_p(\mu,\nu)$,
with exponent $p\geq1$, between two probability
measures $\mu$ and  $\nu,$ is defined by 
\begin{equation*}
  W_p^p(\mu,\nu):=\inf_{J_{\mu,\nu}}
  \int dJ_{\mu, \nu}
  (x,y) |x- y|^p,
\end{equation*}
where the infimum is taken on all the joint probability
distributions $dJ_{\mu,\nu}(x,y)$
with marginals with respect to $dx$ and $dy$ given by
$\mu$ and $\nu$, respectively.  
Defining the empirical measures
$$
\mu^N:=\frac{1}{N}\sum_{i=1}^N\delta_{X_i},\phantom{aa}
\nu^N:=\frac{1}{N}\sum_{i=1}^N\delta_{Y_i},
$$
it is possible to show that
$$C_N(X^N, Y^N) = N W^2_2(\mu^N,\nu^N),$$
(see for instance \cite{B}).
In the sequel we will shorten
$C_N: = C_N(X^N,Y^N)$.

The first general result on Random Euclidean Matching was obtained by combinatorial arguments in \cite{AKT}. 
In particular, in the case of dimension 2 and exponent 2, assumed that $X_i$ and $Y_i$ are
independently sampled with uniform density on the unit square $Q$ they prove that
$\mathbb{E}_\sigma[W_2^2(\mu^N,\nu^N)] $
behaves like $\frac{\log N}{N}$, where with $\mathbb {E}_\sigma$ we have indicated with the expected value
with respect to the uniform distribution $d\sigma(x) = dx$
of the points $\{X_i\}$ and $\{Y_i\}$.

In the challenging paper \cite{CLPS}, Caracciolo et al. conjecture that
\begin{equation}
    \label{eq:ecn}
    \mathbb {E}_\sigma[C_N] \sim \frac{\log N}{2\pi},
\end{equation}
where we say that $f\sim g$
if $\lim_{N\rightarrow+\infty} f(N)/g(N) =1$.
In terms of $W_2^2$ the conjecture is equivalent to
\begin{equation}
    \label{done1}
    \mathbb{E}_\sigma[W_2^2(\mu^N,\nu^N)] \sim \frac{\log N}{2\pi N}.
\end{equation}

Furthermore, in \cite{CLPS} it is conjectured that asymptotically the
expected value of
$W_2^2(\mu^N,\sigma)$ between the empirical density $X^N$ and the uniform probability measure $\sigma$ on $Q$ is given by
\begin{equation}
    \label{done2}
    \mathbb{E}_\sigma[W_2^2(\mu^N,\sigma)]
    \sim \frac{\log N}{4\pi N}.
\end{equation}

The above conjectures were proved by Ambrosio et al. \cite{AST}.
In \cite{AG} more precise estimates are given and it is proved
that the result can be extended to the case where
particles are sampled from the uniform measure on a two-dimensional
Riemannian compact manifold. In \cite{AGlT} it is shown that the
optimal transport map
for $W_2(\mu^N,\sigma)$ can be approximated as conjectured in \cite{CLPS}.

We notice that, by simple scaling arguments, if we consider squares or manifold $\Lambda$ of measure $|\Lambda|$,
the cost has to be multiplied by $|\Lambda|$.
\vskip.3cm

Then, in \cite{BC} it has been conjectured that, if the points are sampled from a smooth and strictly positive density $\rho$ in a regular set $\Lambda,$ then the result is the same: i.e.
  the leading term of the expected value of the cost is $ \frac{|\Lambda|}{2\pi}\log N.$
  The conjecture is based on a linearization of Monge-Ampere equation close to a non uniform density and  a proof of the estimate form above is given when $\Lambda$ is a square.   
  This result has been proved by Ambrosio et al. \cite{AGT}. In particular they generalize the result to H\"{o}lder continuous positive densities in bounded regular sets and in Riemannian manifolds.
  
  Summarizing, if the density $\sigma=\rho dx ,$ is supported in a bounded regular set $\Lambda$ where $\rho$ is Holder continuous, and if there exist constants $a$ and $b$ such that $0 < a <\rho<b,$ then 
\begin{eqnarray}\label{univ}
\mathbb {E}_\sigma[C_N] \sim \frac{|\Lambda|}{2\pi}\log N
\end{eqnarray}
  
In \cite{BCCDSS}, in the case of constant densities, the correction to the leading behavior has been studied. In particular it is conjectured that the correction is given in terms of the regularized trace of the inverse of the Laplace operator in the set.   
  
\subsection{Main results}
Interestingly \eqref{univ} implies that the limiting average cost is not continuous in the space of densities, even in $L_{\infty}$ norm.
 
Indeed, if we consider a sequence of smooth strictly positive densities $\rho_k$ on a disk of radius $2,$  converging, as $k\rightarrow\infty$ to $\rho=\frac{1}{\pi}1_{|x|<1},$ that is the uniform density on the disk of radius $1,$  we get that for any $k:$ $\mathbb {E}_\sigma[C_N] \sim 2\log N,$ while for the limiting density $\rho$ we get 
 $\mathbb {E}_\sigma[C_N] \sim \frac12\log N.$
 
 It is therefore natural to ask if it is possible to define sequences of densities, positive  on all the disk of radius $2,$ that converge to the density $\rho=\frac{1}{\pi}1_{|x|<1},$ and such that $\mathbb {E}_\sigma[C_N] \sim c\log N,$ where $c\in(\frac12,2).$ The answer is yes.

For instance, if we consider, in the disk of radius $2$ the sequence of $N$-dependent "multiscaling" densities
\begin{equation}
	\rho_N=
	\begin{cases}
	\frac{1}{\pi}\left(1-N^{\alpha-1}\right)\,; 0<|x|\leq 1\\
	\frac1{3\pi} N^{\alpha-1}\phantom{,,,}; 1 < |x|\leq2
	\end{cases}
\end{equation}
where $0< \alpha<1.$ That is, in the average, there are $N-N^\alpha$ points in the disk and $N^\alpha$ points in the annulus. 
 
 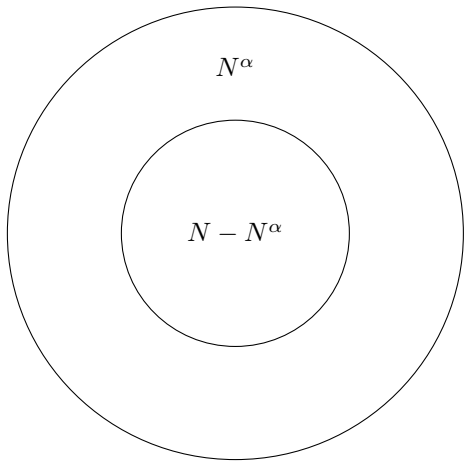
\begin{figure}[hbt!]
\centering
 \begin{tikzpicture}
 \node[] at (2,2) {$N-N^\alpha$};
 \node[] at (2,4.2) {$N^\alpha$};
 \draw (2,2) circle (1.5cm);
\draw (2,2) circle (3cm);
\end{tikzpicture}
\caption{Multiscaling density} \label{fig:M1}
 \end{figure}
 In this case if $d\sigma(x)=\rho_N(x)dx$, as we prove in Section \ref{sec2}, the average of the cost is given by 
 $$\mathbb {E}_\sigma[C_N] \sim\frac{1}{2\pi}(\pi \log N + 3\pi\log N^\alpha)=\left(\frac12+\frac32 \alpha\right)\log N$$
Here we consider three problems.

The first is a generalization of the example seen above to any finite number of circular annuli.
 As we shall see this can be considered as a toy model for the Gaussian case.
 
Under suitable monotonicity conditions we will prove, see Theorem \ref{multiscalingthm}, that the average cost is 
given by
$$\mathbb{E}_\sigma[C_N]\sim\frac{1}{2\pi}\int_{\Lambda}dx \log (N \rho_N(x))$$
 
 The second case is the case of the Gaussian distribution, that is $d\mu(x)=\rho(x)dx$ and
 $$\rho(x)=\frac{1}{2\pi}e^{-\frac{|x|^2}{2}}.$$ 
 In this case 
Talagrand proved
  \cite{Ta}, that the average cost, for large $N$
  satisfies
    $$ \frac1C(\log N)^2 \leq \mathbb{E}_\mu[C_N]\leq C  (\log N)^2.$$
  An estimate from above proportional to $(\log N)^2$ was
  previously proved by Ledoux in \cite{Le}, see also \cite{Le2} where an estimate from below is proved using PDE techniques as in \cite{AST}.
  
 In this case we prove, see Theorem \ref{gauxthm}, that the average cost is 
 $$\mathbb{E}_\mu[C_N]\sim\frac12  (\log N)^2 .$$ 
 The third case, is when the density is given by 
 $$\rho(x)=\chi_{[0,1]}(x_1)\frac1{\sqrt{2\pi}}e^{-\frac{x_2^2}2}.$$
and again $d\mu(x)=\rho(x)dx$. This density, interpreting $x_2$ as a velocity, is simply the Maxwellian distribution for a gas in the box (the segment) $[0,1].$  
 
 In this case we prove, see Theorem \ref{Maxthm}, that the average limit cost is 
 $$\mathbb{E}_\mu[C_N]\sim\frac{2\sqrt{2}}{3\pi} (\log N)^{3/2} .$$ 
 As we shall see the problem of the Gaussian and the problem of the Maxwellian can be considered as the limit of a suitable sequence of the multiscaling densities, in particular, in both cases we obtain that \begin{equation}\mathbb{E}_\mu[C_N]\sim\frac{1}{2\pi}\int dx[\log (N \rho(x))]_+ \label{generalbehavior}\end{equation}
 Dealing with other radially symmetric exponentilaly decaying densities, that is with densities proportional to $e^{-|x|^\alpha}$, Talagrand showed that the leading behavior is  proportional to $(\log N)^{1+2/\alpha}.$
 We think that it would be possible to modify the proofs given here to deal with these cases and to get also in this case the exact leading behavior (i.e. determining also the multiplicative constant). More precisely, by  \eqref{generalbehavior} we would get 
 $$\mathbb{E}_\mu[C_N]\sim \frac{\alpha}{4+2\alpha}(\log N)^{1+2/\alpha} $$
 Note that for $\alpha = 2$ we find the constant $\frac14$ instead of $\frac12$  because here we are considering the Gaussian $e^{-|x|^2}$  and not $e^{-|x|^2/2}.$
 
 To prove this is out of the aim of the present paper and the technique would be slightly different, because here we make use of the fact that the gaussian is a product measure.
 
For what concerns probability distributions that decays as a power of the distance, for instance $\frac{1}{1+|x|^\alpha},$ we do not dare to make conjectures. In that case the slow decay of the distribution does not allow us to apply techniques similar to those used in this work.
 
 \vskip.3cm
 The structure of the paper is the following.
 
 In Section \ref{subsec1} we give some general results on Wasserstein distance that we use in the sequel.
 
In Section \ref{sec2} we consider the case of multiscaling densities, in Section \ref{sec3} the case of the Gaussian density and in Section \ref{sec1} the case of the Maxwellian density.

\vskip.3cm

 Now we briefly review what it is known, up to our knowledge, on the Random Euclidean Matching in dimension $d\neq 2,$ with particular attention to the case of the constant distribution in the unite cube and of the Gaussian distribution.

In dimension $1$ the Random Euclidean Matching problem  is
almost completely characterized, for any $p\geq 1.$ This is due to the
fact that the best matching between two set of points on a line is 
monotone, see for instance \cite{CS2014}, \cite{CDS}, and \cite{BL} where 
a general discussion on the one-dimensional case, also for the case of
non-constant densities is given.  
In particular, for a segment of lenght $1$ and for $p=2:$ $\mathbb{E}[C_N]\to 1/3$ as $N\to\infty.$ For the normal distribution in dimension $1$ 
 in \cite{BF} it is proved that $\mathbb{E}[C_N]\sim \log \log N,$ while in \cite{BL} estimates from below and from above proportional to $\log \log N$ were given.
 
 In dimension $d\geq 3,$ for the constant density in a cube, it has been proved that 
$\mathbb{E}[C_N]$ behaves as  $N^{1-p/d},$  for any $p\geq 1$ (see \cite{T1996},
\cite{DY}, \cite{Le}). In  \cite{GT} it has been proved the existence of the limit $\frac{\mathbb{E}[C_N]}{N^{1-p/d}}$ for any $p\geq 1$).
 
In dimension $d\geq 3$, the case of unbounded densities and in particular the gaussian case has been widely studied, see \cite{Ta},
\cite{DY},   \cite{BaB}, \cite{Le3}.
In particular, in \cite{BaB}, it has been proved that $\mathbb{E}[C_N]$ behaves as $N^{1-p/d},$ for any $0 < p < d/2,$ and an explicit expression for the constant multiplying $N^{1-p/d}$ is conjectured, while
in \cite{Le3}, it has been proved that 
$\mathbb{E}[C_N]$ behaves as $N^{1-p/d},$   for any $1\leq p < d.$
General results on Random Euclidean Matching, including the case $p > d$ and the case of unbounded densities are given in  \cite{FouG}.

\section{Useful results and notations}\label{subsec1}

In this Section, we recall some preliminary results that we will need later.

The following Lemma links the cost of semidiscrete problem to the cost of bipartite one.
\begin{lemma}[\cite{AST}, Proposition 4.8]\label{semibip}
Let $\rho$ be any probability density on $\mathbb{R}^2$ and let $X_1,\dots,X_N$ and $Y_1,\dots,Y_N$ independent random variables in $\mathbb{R}^2$ with common distribution $\rho$, then
$$
\mathbb{E}\left[W_2^2\left(\frac{1}{N}\sum_{i=1}^N\delta_{X_i},\frac{1}{N}\sum_{i=1}^N\delta_{Y_i}\right)\right]\leq2\mathbb{E}\left[W_2^2\left(\frac{1}{N}\sum_{i=1}^N\delta_{X_i},\rho\right)\right]
$$
\end{lemma}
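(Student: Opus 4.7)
The statement compares the bipartite matching cost $W_2^2(\mu^N,\nu^N)$ with the semidiscrete cost $W_2^2(\mu^N,\rho)$, where $\mu^N:=\frac1N\sum_i\delta_{X_i}$ and $\nu^N:=\frac1N\sum_i\delta_{Y_i}$. The obvious way to relate them is the triangle inequality for the $2$-Wasserstein metric, using the common parent density $\rho$ as the intermediate measure.

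The first step is to write
$$W_2(\mu^N,\nu^N)\le W_2(\mu^N,\rho)+W_2(\rho,\nu^N)$$
and square it. A direct application of $(a+b)^2\le 2(a^2+b^2)$ yields the sample-wise bound
$$W_2^2(\mu^N,\nu^N)\le 2\,W_2^2(\mu^N,\rho)+2\,W_2^2(\rho,\nu^N).$$
An equivalent, more constructive derivation uses the Laguerre tessellations $\{A_i\}$ and $\{B_j\}$ associated with the optimal semi-discrete transport maps $\rho\to\mu^N$ and $\rho\to\nu^N$: the coupling placing mass $\rho(A_i\cap B_j)$ on the pair $(X_i,Y_j)$ has marginals $\mu^N$ and $\nu^N$, and the elementary inequality $|X_i-Y_j|^2\le 2|X_i-z|^2+2|z-Y_j|^2$ for $z\in A_i\cap B_j$ reproduces the same pointwise estimate after integration against $\rho$.

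The second step is to take expectation, and to use that $\{X_i\}$ and $\{Y_i\}$ are identically distributed, so that $\mathbb{E}[W_2^2(\mu^N,\rho)]=\mathbb{E}[W_2^2(\rho,\nu^N)]$. This directly produces an upper bound on $\mathbb{E}[W_2^2(\mu^N,\nu^N)]$ proportional to $\mathbb{E}[W_2^2(\mu^N,\rho)]$, which is the form claimed in the lemma.

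The delicate point, which I expect to be the main obstacle, is getting the \emph{sharp constant}: the direct chain above yields constant $4$, while the lemma asserts $2$. A natural way to save a factor is to keep the cross term in $(W_2(\mu^N,\rho)+W_2(\rho,\nu^N))^2$ instead of bounding it by $\tfrac12(W_2^2(\mu^N,\rho)+W_2^2(\rho,\nu^N))$, then to exploit the \emph{independence} of $\{X_i\}$ and $\{Y_i\}$ to factor $\mathbb{E}[W_2(\mu^N,\rho)\,W_2(\rho,\nu^N)]=(\mathbb{E}[W_2(\mu^N,\rho)])^2$, and finally to control this by Jensen. The fine bookkeeping that brings the constant exactly down to $2$ is where I would follow the specific argument of AST's Proposition~4.8, rather than rely on an elementary manipulation.
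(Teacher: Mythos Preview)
The paper does not give its own proof of this lemma; it merely cites \cite{AST}, Proposition~4.8. So the comparison is with the argument there.

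Your diagnosis is accurate: the scalar triangle inequality followed by $(a+b)^2\le 2a^2+2b^2$ only gives constant $4$, and your attempted fix via the scalar cross term does not help either, since $\mathbb{E}[W_2(\mu^N,\rho)W_2(\rho,\nu^N)]=(\mathbb{E}[W_2(\mu^N,\rho)])^2\le \mathbb{E}[W_2^2(\mu^N,\rho)]$ by Jensen still lands you back at $4$. You correctly sense that something more is needed but stop short of identifying it.

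The missing idea is to exploit independence \emph{pointwise at the level of the transport maps}, not at the level of the scalar Wasserstein distances. Let $T$ and $S$ be the optimal maps from $\rho$ to $\mu^N$ and from $\rho$ to $\nu^N$ respectively (these exist since $\rho$ is absolutely continuous). Then $(T,S)_\#\rho$ is a coupling of $\mu^N$ and $\nu^N$, so
\[
W_2^2(\mu^N,\nu^N)\le\int|T(x)-S(x)|^2\rho(x)\,dx
=\int|T(x)-x|^2\rho\,dx+\int|S(x)-x|^2\rho\,dx
-2\int\langle T(x)-x,\,S(x)-x\rangle\,\rho\,dx.
\]
The first two integrals are exactly $W_2^2(\mu^N,\rho)$ and $W_2^2(\nu^N,\rho)$. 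For the cross term, take expectation over the samples: since $T$ depends only on $\{X_i\}$ and $S$ only on $\{Y_i\}$, for each fixed $x$ the vectors $T(x)-x$ and $S(x)-x$ are independent and identically distributed, hence
\[
\mathbb{E}\big[\langle T(x)-x,\,S(x)-x\rangle\big]=\big|\mathbb{E}[T(x)-x]\big|^2\ge 0,
\]
so the cross term contributes with a \emph{non-positive} sign after expectation. This is what brings the constant down from $4$ to $2$. Your Laguerre-cell coupling is in fact this same construction, but you then threw away the sign information by applying $|X_i-Y_j|^2\le 2|X_i-z|^2+2|z-Y_j|^2$ instead of expanding the square exactly.
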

We will use also the following property for the upper bounds of the leading terms, that is a consequence Benamou-Brenier formula.
\begin{theorem}[\cite{BB}, Benamou-Brenier formula]
If $\mu$ and $\nu$ are probability measures on $\Omega\subseteq\mathbb{R}^d$, then 
\begin{eqnarray*}
W_2^2(\mu,\nu)=\inf\left\{\left.\int_0^1dt\int_{\Omega}d\mu_t|v_t|^2\right|\begin{array}{lcr}\frac{d}{dt}\mu_t+{\it div}(v_t\mu_t)=0\\\mu_0=\mu,\mu_1=\nu\end{array}\right\}
\end{eqnarray*}
\end{theorem}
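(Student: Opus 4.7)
The plan is to establish the two inequalities separately, exploiting Brenier's theorem on one side and the superposition principle for solutions of the continuity equation on the other.

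First, for the direction $W_2^2(\mu,\nu) \geq \inf\{\cdots\}$, I would exhibit a candidate pair $(\mu_t,v_t)$ whose action equals $W_2^2(\mu,\nu)$. By Brenier's theorem (assuming $\mu$ absolutely continuous; the general case follows by approximation) there is an optimal transport map $T$ with $\nu = T_\#\mu$ and $W_2^2(\mu,\nu)=\int|T(x)-x|^2\,d\mu(x)$. Taking the McCann displacement interpolation $T_t(x)=(1-t)x+tT(x)$, $\mu_t=(T_t)_\#\mu$, and prescribing the velocity on the support of $\mu_t$ by $v_t(T_t(x))=T(x)-x$, one checks distributionally that the continuity equation holds, and the change of variables gives
\[
\int_0^1\!dt\int_\Omega|v_t|^2\,d\mu_t \;=\; \int_\Omega|T(x)-x|^2\,d\mu(x) \;=\; W_2^2(\mu,\nu),
\]
so the infimum is no larger than $W_2^2(\mu,\nu)$.

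For the converse $W_2^2(\mu,\nu) \leq \inf\{\cdots\}$, fix any admissible $(\mu_t,v_t)$ with finite action and construct from it a coupling of $\mu$ and $\nu$. If $v_t$ were smooth, I would follow the characteristics $\dot X_t=v_t(X_t)$ with $X_0\sim\mu$; then $X_1\sim\nu$ by the continuity equation, so the law of $(X_0,X_1)$ is a transport plan, and Cauchy-Schwarz gives
\[
|X_1-X_0|^2 \;\leq\; \int_0^1|v_t(X_t)|^2\,dt,
\]
which upon integrating against $\mu$ yields the desired bound. In the general low-regularity case I would invoke Ambrosio's superposition principle to represent $\mu_t=(e_t)_\#\eta$ for a probability measure $\eta$ on $C([0,1];\Omega)$ concentrated on absolutely continuous curves $\gamma$ with $\dot\gamma(t)=v_t(\gamma(t))$ for $\eta$-a.e. $\gamma$; the same Cauchy-Schwarz step applied curve by curve gives the conclusion after integrating against $\eta$.

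The main obstacle is precisely the regularity of the velocity field: in the stated generality no classical flow of $v_t$ exists, so the naive characteristic argument does not directly apply. The superposition principle is the clean tool to circumvent this; alternatively one can mollify $v_t$, solve the smooth problem, and pass to the limit using weak compactness of the approximating transport plans together with lower semicontinuity of the kinetic action functional $(\mu_t,v_t)\mapsto\int_0^1\!\int|v_t|^2 d\mu_t\,dt$ (convex and lower semicontinuous once written in the momentum variables $m_t=v_t\mu_t$).
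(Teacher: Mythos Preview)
The paper does not prove this statement at all: it is quoted from the literature (Benamou--Brenier \cite{BB}) as a known tool, with no argument given. There is therefore nothing in the paper to compare your proposal against.

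That said, your outline is the standard one and is correct in spirit. The displacement-interpolation construction for the upper bound on the infimum is exactly right, and for the lower bound the superposition principle (or smoothing and passing to the limit via the convex reformulation in momentum variables) is the accepted way to handle the lack of regularity of $v_t$. One small caveat: as stated in the paper, the theorem is asserted for arbitrary probability measures on $\Omega$, while your upper-bound argument invokes Brenier's map and hence needs $\mu$ absolutely continuous; you correctly flag that the general case requires an approximation step, but you should make explicit that this uses lower semicontinuity of $W_2^2$ under weak convergence together with density of absolutely continuous measures.
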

The result we will use is the following.
\begin{theorem}[\cite{AG}, Corollary 4.4]\label{benbrefor}
If $\mu$ and $\nu$ are probability densities on $\Omega\subseteq\mathbb{R}^d$ and $\phi$ is a weak solution of $\Delta\phi=\mu-\nu$ with Neumann boundary conditions, then
\begin{eqnarray*}
W_2^2(\mu,\nu)\leq4\int_{\Omega}dx\frac{|\nabla\phi(x)|^2}{\mu(x)}
\end{eqnarray*}
\end{theorem}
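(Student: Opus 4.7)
The plan is to apply the Benamou--Brenier formula to a carefully constructed curve $(\mu_s, v_s)$ joining $\mu$ to $\nu$, built out of the given potential $\phi$. As a warm-up, consider the linear interpolation $\mu_t = (1-t)\mu + t\nu$ together with $v_t = \nabla\phi/\mu_t$. The continuity equation is satisfied, since
$$\partial_t \mu_t = \nu - \mu = -\Delta\phi = -\mathrm{div}(\nabla\phi) = -\mathrm{div}(v_t \mu_t),$$
and the Neumann condition $\partial_\nu \phi = 0$ on $\partial\Omega$ ensures no normal flux through the boundary. Benamou--Brenier then gives $W_2^2(\mu,\nu)\le \int_0^1\!\!\int_\Omega |\nabla\phi|^2/\mu_t\,dx\,dt$, but $1/\mu_t$ has a logarithmic singularity as $t\to 1$ on $\{\mu = 0, \nu > 0\}$, so this naive path only produces a weaker bound.

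To pull out the exact constant $4$, I would reparametrize time by $t(s) = 1-(1-s)^2$, so that $1-t(s) = (1-s)^2$ and $\dot t(s) = 2(1-s)$. Setting $\tilde\mu_s := \mu_{t(s)}$ and $\tilde v_s := \dot t(s)\,v_{t(s)} = 2(1-s)\,\nabla\phi/\tilde\mu_s$ gives another admissible pair from $\mu$ to $\nu$ (the continuity equation scales covariantly under time reparametrization). Its action is
$$\int_0^1\!\!\int_\Omega \tilde\mu_s|\tilde v_s|^2\,dx\,ds = \int_0^1\!\!\int_\Omega \frac{4(1-s)^2\,|\nabla\phi(x)|^2}{(1-s)^2\mu(x) + \bigl(1-(1-s)^2\bigr)\nu(x)}\,dx\,ds.$$
Since $\nu\ge 0$, the denominator is bounded below by $(1-s)^2\mu$, so the integrand is dominated pointwise by $4|\nabla\phi|^2/\mu$, which no longer depends on $s$. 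Integrating in $s\in[0,1]$ yields $W_2^2(\mu,\nu)\le 4\int_\Omega |\nabla\phi|^2/\mu\,dx$, which is the desired estimate.

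The remaining technical issues I would need to sort out are: (i) approximating $\mu$, $\nu$ and $\phi$ by smooth objects (e.g.\ via mollification), so that Benamou--Brenier and the continuity equation can be used classically, then passing to the limit using lower semicontinuity of $W_2^2$ and weak convergence of the curves; (ii) interpreting the integrand on $\{\mu = 0\}$, which is only problematic when $\nu$ charges that set, and in that case the right-hand side is $+\infty$ and the inequality is vacuous; and (iii) the compatibility condition $\int_\Omega(\mu-\nu)\,dx = 0$ needed for the Neumann problem $\Delta\phi = \mu-\nu$ to admit a weak solution, which is automatic for probability densities. The conceptual heart of the proof is entirely the elementary time-reparametrization above; everything else is bookkeeping. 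I expect the main obstacle to be the smoothness/approximation step, rather than the algebra, since one has to verify that $|\nabla\phi|^2/\mu$ is stable enough under mollification to allow passing to the limit.
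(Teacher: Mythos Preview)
The paper does not actually prove this statement: it is quoted as Corollary~4.4 of \cite{AG} and used as a black box. There is therefore nothing in the paper to compare your argument against.

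That said, your proof is correct and is essentially the standard one (it is the argument in \cite{AG}, and appears in similar form in Santambrogio's book). The linear interpolation $\mu_t=(1-t)\mu+t\nu$ with flux $\nabla\phi$ solves the continuity equation by construction, and your time reparametrization $t(s)=1-(1-s)^2$ is exactly the trick that turns the naive bound $\int_0^1\!\int|\nabla\phi|^2/\mu_t$ into the clean $4\int|\nabla\phi|^2/\mu$: the factor $(1-s)^2$ from $\dot t(s)^2$ cancels the $(1-s)^2$ in front of $\mu$ in the denominator after dropping the nonnegative $\nu$-term. Your remarks on the technical points (approximation, the set $\{\mu=0\}$, the compatibility condition) are all to the point; none of them hides a real obstacle.
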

We will also use a result by Talagrand, that relates the Wasserstein distance with the relative entropy, that is the following.
\begin{theorem}[\cite{T1996}, Talagrand formula]\label{talfor}
Let $\rho$ be the Gaussian density in $\mathbb{R}^d$, that is $\rho(x)=\frac{1}{\sqrt{2\pi}^d}e^{-\frac{|x|^2}{2}}$. If $\mu$ is another density on $\mathbb{R}^d$, then we have
\begin{eqnarray*}
W_2^2(\rho,\mu)\leq\int_{\mathbb{R}^d}dx\mu(x)\log\frac{\mu(x)}{\rho(x)}
\end{eqnarray*}
\end{theorem}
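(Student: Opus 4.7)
The plan is to give a direct proof in general dimension $d$ via the Brenier optimal transport map, exploiting the log-quadratic structure of the Gaussian and bypassing any induction on dimension.

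First I would invoke Brenier's theorem: since finite relative entropy forces $\mu$ to have finite second moment, there is a convex potential $\phi:\R^d\to\R$ such that $T=\nabla\phi$ pushes $\rho$ forward to $\mu$ and $W_2^2(\rho,\mu)=\int|T(x)-x|^2\rho(x)\de x$. The associated Monge--Amp\`ere equation reads $\rho(x)=\mu(T(x))\,\det D^2\phi(x)$.

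Next I would rewrite the relative entropy through the change of variable $y=T(x)$. Using the Gaussian-specific identity $\log\rho(x)-\log\rho(T(x))=(|T(x)|^2-|x|^2)/2$, one obtains
\[H(\mu|\rho)=\int\rho(x)\Bigl[\tfrac{|T(x)|^2-|x|^2}{2}-\log\det D^2\phi(x)\Bigr]\de x.\]
Expanding $|T-x|^2$ in the definition of $W_2^2$ and forming the combination $2H(\mu|\rho)-W_2^2(\rho,\mu)$, the purely quadratic part collapses to $2\int\rho\,x\cdot(T(x)-x)\,\de x$. An integration by parts against the Gaussian identity $\nabla\rho(x)=-x\,\rho(x)$ turns this into $2\int\rho\,[\mathrm{tr}(D^2\phi)-d]\,\de x$, and so
\[2H(\mu|\rho)-W_2^2(\rho,\mu)=2\int\rho(x)\Bigl[\mathrm{tr}(D^2\phi(x))-d-\log\det D^2\phi(x)\Bigr]\de x.\]
Diagonalising the non-negative symmetric matrix $D^2\phi(x)$ with eigenvalues $\lambda_1(x),\ldots,\lambda_d(x)>0$, the bracketed integrand equals $\sum_i[\lambda_i(x)-1-\log\lambda_i(x)]$, which is pointwise non-negative by the elementary convexity inequality $s-1-\log s\ge 0$ for $s>0$. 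This yields the classical Talagrand bound in the form $W_2^2(\rho,\mu)\le 2H(\mu|\rho)$, which is what the argument delivers for the unit-variance Gaussian stated in the theorem.

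The main obstacle is rigorously justifying the integration by parts and the pointwise use of Monge--Amp\`ere: the Brenier potential $\phi$ is only convex a priori, so $D^2\phi$ is defined in the Alexandrov/distributional sense, and the boundary terms at infinity also need to be controlled since we have no a priori decay on $T(x)-x$. The standard cure is to approximate $\mu$ by smooth, compactly supported $\mu_\varepsilon$, carry out the computation on $\mu_\varepsilon$ (where Caffarelli's regularity theory gives a classical $C^2$ solution), and pass to the limit using lower semicontinuity of $W_2^2$ and of the relative entropy under weak convergence with second-moment bounds. An alternative route, closer to Talagrand's original argument, is to treat the one-dimensional case by direct calculation with the monotone rearrangement (where the same inequality $s-1-\log s\ge 0$ appears) and then tensorise using the product structure of $\rho$; this sidesteps the regularity issues entirely, at the cost of more bookkeeping.
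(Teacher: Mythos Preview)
The paper does not prove this theorem at all: it is quoted as a preliminary result from Talagrand's 1996 paper, with no argument given. So there is nothing to compare your approach against within the paper itself. Your Brenier--map argument is the now-standard optimal-transport proof (in the style of Otto--Villani and Cordero--Erausquin), and the computation you outline is correct; the technical caveats you flag about Alexandrov second derivatives and integration by parts are real, and your proposed cure by smooth approximation and lower semicontinuity is the usual way around them. For the record, Talagrand's original proof proceeds exactly along the alternative route you sketch at the end: the one-dimensional case via the monotone rearrangement, followed by tensorisation using the product structure of the Gaussian.

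One point worth making explicit: your computation correctly produces the inequality $W_2^2(\rho,\mu)\le 2H(\mu\,|\,\rho)$, with the factor $2$. The statement as printed in the paper omits this factor, but if you look at the two places where the paper actually invokes the inequality (Lemma~\ref{cutoff} and the estimate of \eqref{oksquares2} in Proposition~\ref{oksquares}), the factor $2$ is used. So the discrepancy is a typo in the statement, not a genuine mismatch between what you proved and what is needed.
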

Then, when proving the convergence, while for the upper bound we will use the canonical Wasserstein distance, for the lower bound we will use, as in \cite{AGT}, a distance between non-negative measures introduced in \cite{FG}, that is
\begin{eqnarray}\label{figalligigli}
Wb_2^2(\mu,\nu):=\inf\left\{\left.\int_{\bar\Omega\times\bar\Omega}dJ(x,y)|x-y|^2\right|\begin{array}{lc}\pi^1_{\#}J|_{\Omega}=\mu\\\pi^2_{\#}J|_{\Omega}=\nu\end{array}\right\}
\end{eqnarray}
In the following Theorem we denote with $W_{2,*}$ either the canonical $W_2$ and the boundary version of it, that is $Wb_2$, and we collect some known results that we will need later.

\begin{theorem}[\cite{AGT}, Theorems 1.1 \& 1.2, Propositions 3.1 \& 3.2]\label{teoremone}
Let $\Omega\subseteq\mathbb{R}^2$ be a bounded connected domain with Lipschitz boundary and let $\rho$ be a H\"older continuous probability density on $\Omega$ uniformly strictly positive and bounded from above. Given iid random variables $\{X_i\}_{i=1}^N$ and $\{Y_i\}_{i=1}^M$ with common distribution $\rho$, we have
\begin{eqnarray}
\label{teoremone1}&&\frac{N}{\log N}\mathbb{E}\left[W_{2,*}^2\left(\frac{1}{N}\sum_{i=1}^N\delta_{X_i},\rho\right)\right]\xrightarrow[N\to\infty]{}\frac{|\Omega|}{4\pi}
\\
\label{teoremone2}&&\frac{N}{\log N}\mathbb{E}\left[W_{2,*}^2\left(\frac{1}{N}\sum_{i=1}^N\delta_{X_i},\frac{1}{M}\sum_{i=1}^M\delta_{Y_i}\right)\right]\xrightarrow[N,M\to\infty]{\frac{N}{M}\to q}\frac{|\Omega|}{4\pi}(1+q)
\end{eqnarray}
\end{theorem}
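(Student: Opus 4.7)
The plan is to follow the PDE linearisation strategy developed by Ambrosio-Stra-Trevisan \cite{AST}, extended by Ambrosio-Glaudo-Trevisan \cite{AGT} to the H\"older continuous density setting. The heuristic is that when $\mu^N$ is a small perturbation of $\rho$, the Monge-Amp\`ere equation for the Brenier potential linearises to the weighted Poisson equation $-\nabla\cdot(\rho\nabla\psi)=\mu^N-\rho$ with Neumann condition, and the transport cost linearises to the quadratic form $\int_\Omega\rho|\nabla\psi|^2\,dx$. Since $Wb_2\leq W_2$ by construction, it suffices to prove the upper bound for $W_2$ and the lower bound for $Wb_2$; the two statements \eqref{teoremone1}--\eqref{teoremone2} then reduce to the same eigenfunction computation, with variance $N^{-1}$ in the one-sample case replaced by $N^{-1}+M^{-1}$ in the bipartite case (producing the factor $1+q$ under $N/M\to q$).

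For the upper bound in \eqref{teoremone1}, I regularise $\mu^N$ by the Neumann heat semigroup for time $t_N\sim 1/N$, producing a smooth density $\rho^N_{t_N}$, and split
\begin{equation*}
W_2^2(\mu^N,\rho)\leq 2W_2^2(\mu^N,\rho^N_{t_N})+2W_2^2(\rho^N_{t_N},\rho).
\end{equation*}
The first piece is controlled by the short-time bound $W_2^2(\mu,P_t\mu)\lesssim t$, negligible for the chosen $t_N$. For the second I apply a Benamou-Brenier type estimate (a sharpening of Theorem \ref{benbrefor}) along the heat interpolation $s\mapsto P_s\mu^N$, reducing the cost to the quadratic form $\int_\Omega\rho|\nabla\psi|^2\,dx$ with $\psi$ the solution of the weighted Poisson equation above. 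Expanding $\psi$ on the eigenbasis $\{\varphi_k,\lambda_k\}$ of the weighted Laplacian $u\mapsto-\rho^{-1}\nabla\cdot(\rho\nabla u)$ with Neumann boundary conditions, using independence of the sample to compute $\mathbb{E}[\langle\mu^N-\rho,\varphi_k\rangle^2]\sim N^{-1}$, and invoking Weyl's law $\#\{k:\lambda_k\leq\Lambda\}\sim |\Omega|\Lambda/(4\pi)$, the resulting truncated harmonic series evaluates to $\frac{|\Omega|}{4\pi}\frac{\log N}{N}(1+o(1))$, which is the announced constant.

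For the lower bound of $Wb_2^2(\mu^N,\rho)$, I use Kantorovich duality with a test pair built from the same $\psi$: a pair of the form $u(x)=2\psi(x)-\varepsilon_N|\nabla\psi(x)|^2$ and $v(y)=-2\psi(y)$ is admissible (the boundary correction being absorbed by the free endpoint in the definition \eqref{figalligigli} of $Wb_2$), and substitution and expectation reproduce the same eigenfunction sum as in the upper bound. The bipartite statement \eqref{teoremone2} follows by replacing $\mu^N-\rho$ with $\mu^N-\nu^M$ throughout; independence of the two samples gives $\mathbb{E}[\langle\mu^N-\nu^M,\varphi_k\rangle^2]\sim(N^{-1}+M^{-1})$, which under $N/M\to q$ produces the factor $(1+q)$. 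The main obstacle is making the Monge-Amp\`ere linearisation rigorous when the source is the singular empirical measure: one has to propagate the contiguity argument of \cite{AST} to the variable density case, and the H\"older regularity of $\rho$ is precisely what allows one to absorb the cubic and higher-order remainders in the expansion of the Monge-Amp\`ere determinant into the leading quadratic form.
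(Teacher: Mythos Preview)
The paper does not prove this statement: Theorem~\ref{teoremone} is quoted from \cite{AGT} (and its antecedent \cite{AST}) as a black-box input, with no proof supplied in the present paper. Your sketch is a faithful high-level summary of the strategy actually carried out in those references---heat-semigroup regularisation plus Benamou--Brenier for the upper bound, Kantorovich duality with potentials built from the linearised Poisson equation for the lower bound, and the Weyl-law eigenvalue count producing the constant $|\Omega|/(4\pi)$---so there is no discrepancy to report beyond the fact that the comparison you were asked to make is vacuous here.
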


Finally, we specify that hereafter we will denote the expected value conditioned to a random variable $X$ as $\mathbb{E}_{X}[\cdot]:=\mathbb{E}[\cdot|X]$.

In the sequel, we denote with the same symbol a probability measure absolutely continuous with respect to Lebesgue measure and its density.

\section{A piecewise multiscaling density}\label{sec2}

In this Section, we examine the trasportation cost of a random matching problem when $X_1,\dots,X_N$ and $Y_1,\dots,Y_N$ are independent random variables in the disk $C=\{|x|\leq S\}$ with common distribution $\rho^L_N$, defined by
\begin{equation}
\rho^L_N(x):=\frac{N-\sum_{l=1}^{L-1}N^{\alpha_l}}{N}\frac{\mathbbm{1}_{C_0}(x)}{|C_0|}+\sum_{l=1}^{L-1}\frac{N^{\alpha_l}}{N}\frac{\mathbbm{1}_{C_l}(x)}{|C_l|}
\label{distrcorone}
\end{equation}
where we have chosen the exponents $\alpha_l$ strictly positive and decreasing with the index $l$, $\alpha_0:=1,$ and where the annuli $C_l$ are defined by
$$
C_l:=\{s_l<|x|<s_{l+1}\}\quad;\quad0=s_0<s_1<\dots<s_{L-1}<s_L=S
$$
This density is piecewise constant on the annuli $C_l$, it depends on the number of particles we are considering and it allows to have (in the expected value) $N^{\alpha_l}$ particles (or $N-\sum_{l=1}^{L-1}N^{\alpha_l}$ if $l=0$) in the annulus $C_l$.
 \begin{figure}[hbt!]
\centering
 \begin{tikzpicture}
 \node[] at (2,2) {$N-N^{\alpha_1}- ...$};
 \node[] at (2,3.85) {$N^{\alpha_1}$};
 \node[] at (2,4.6) {...};
  \node[] at (2,5.3) {$N^{\alpha_{L-1}}$};
 \draw (2,2) circle (1.5cm);
\draw (2,2) circle (2.2cm);
\draw (2,2) circle (2.9cm);
\draw (2,2) circle (3.6cm);
\end{tikzpicture}
\caption{Multiscaling density} \label{fig:M7}
 \end{figure}
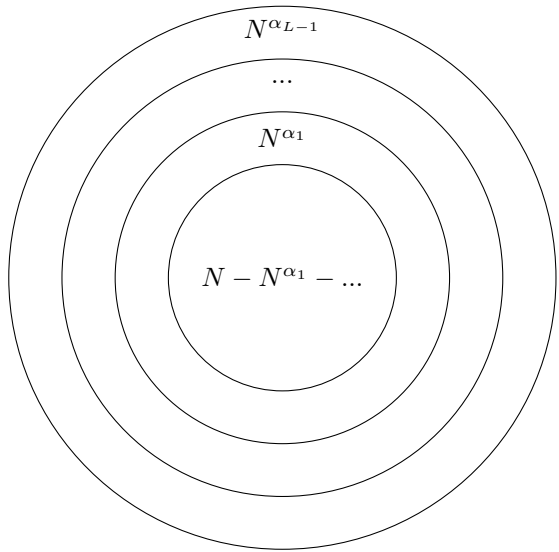
Here we prove the following theorem.
\begin{theorem}\label{multiscalingthm}
If $X_1,\dots,X_N$ and $Y_1,\dots,Y_N$ are iid random variables with common distribution $\rho^L_N$, it holds
\begin{eqnarray*}
&&\frac{N}{\log N}\mathbb{E}\left[W_2^2\left(\frac{1}{N}\sum_{i=1}^N\delta_{X_i},\rho^L_N\right)\right]\xrightarrow[N\to\infty]{}\frac{1}{4\pi}\sum_{l=0}^{L-1}\alpha_l|C_l|
\\
&&\frac{N}{\log N}\mathbb{E}\left[W_2^2\left(\frac{1}{N}\sum_{i=1}^N\delta_{X_i},\frac{1}{N}\sum_{i=1}^N\delta_{Y_i}\right)\right]\xrightarrow[N\to\infty]{}\frac{1}{2\pi}\sum_{l=0}^{L-1}\alpha_l|C_l|
\end{eqnarray*}
\end{theorem}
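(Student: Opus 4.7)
The plan is to reduce the matching problem for the multiscaling density to $L$ independent matching problems on annuli of uniform density. Conditionally on the vector $\ve N=(N_0,\dots,N_{L-1})$ of counts, the $N_l$ samples falling in $C_l$ are i.i.d.\ uniform on $C_l$, and the $N_l$'s are multinomial, concentrating sharply around $\mathbb E[N_0]=N-\sum_{l\ge 1}N^{\alpha_l}$ and $\mathbb E[N_l]=N^{\alpha_l}$ for $l\ge 1$ with fluctuations of order $N^{\alpha_1/2}$. Applying Theorem~\ref{teoremone} in each annulus with $N_l$ points on a domain of area $|C_l|$ and aggregating should yield
\[
\mathbb E_{\ve N}\!\left[W_2^2(\mu_N,\rho^L_N)\right]\sim\sum_{l=0}^{L-1}\frac{|C_l|}{4\pi}\,\frac{\log N_l}{N}\sim\frac{\log N}{4\pi N}\sum_{l=0}^{L-1}\alpha_l|C_l|,
\]
where in the last step I use $\mathbb E[\log N_l]\sim\alpha_l\log N$ (with $\alpha_0=1$). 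For the bipartite version the analogous computation uses \eqref{teoremone2} with $q=1$ in every annulus and replaces $1/(4\pi)$ by $1/(2\pi)$.

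\textbf{Upper bound.} I would introduce the intermediate radial measure
\[
\tilde\rho_N(x):=\sum_{l=0}^{L-1}\frac{N_l}{N}\,\frac{\mathbbm{1}_{C_l}(x)}{|C_l|},
\]
which carries the same mass $N_l/N$ on each $C_l$ as $\mu_N$. By triangle inequality, $W_2(\mu_N,\rho^L_N)\le W_2(\mu_N,\tilde\rho_N)+W_2(\tilde\rho_N,\rho^L_N)$. The first term splits annulus-by-annulus because mass is preserved on each $C_l$, and Theorem~\ref{teoremone} applied to $C_l$ with $N_l$ points delivers the claimed constant. The second term is a purely radial redistribution: solving $\Delta\phi=\tilde\rho_N-\rho^L_N$ with Neumann data on $\partial C$ gives $\phi'(r)=r^{-1}\int_0^r s\,[\tilde\rho_N-\rho^L_N](s)\,ds$, whence $|\phi'|\lesssim N^{\alpha_1/2-1}$. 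Inserting this in Theorem~\ref{benbrefor} yields $W_2^2(\tilde\rho_N,\rho^L_N)=O(1/N)=o(\log N/N)$, since $\alpha_1<1$.

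\textbf{Lower bound.} Apply the reverse triangle inequality $W_2(\mu_N,\rho^L_N)\ge W_2(\mu_N,\tilde\rho_N)-W_2(\tilde\rho_N,\rho^L_N)$ together with $W_2(\tilde\rho_N,\rho^L_N)=O(N^{-1/2})=o(\sqrt{\log N/N})$, so it suffices to bound $W_2^2(\mu_N,\tilde\rho_N)$ from below. Following the strategy used in \cite{AGT}, I would replace $W_2$ by the boundary variant $W_{b,2}$ of \eqref{figalligigli} and use its sub-additivity under the disjoint partition $\{C_l\}$,
\[
W_2^2(\mu_N,\tilde\rho_N)\ge W_{b,2}^2(\mu_N,\tilde\rho_N)\ge \sum_{l=0}^{L-1} W_{b,2}^2\!\left(\mu_N|_{C_l},\tilde\rho_N|_{C_l}\right),
\]
and then invoke Theorem~\ref{teoremone} with $W_{2,*}=W_{b,2}$ in each $C_l$ with $N_l$ uniform points to recover the same constant as in the upper bound. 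For the bipartite statement, the upper bound is immediate from Lemma~\ref{semibip} (which multiplies the semi-discrete constant by $2$), and the lower bound follows by the same annulus decomposition invoking \eqref{teoremone2} with $q=1$ inside every $C_l$.

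\textbf{Main obstacle.} The genuinely delicate point is the annulus-by-annulus decoupling needed for the lower bound: one must ensure that constraining the transport to respect the partition $\{C_l\}$ costs only $o(\log N/N)$. This is precisely where the strict monotonicity $\alpha_0=1>\alpha_1>\dots>\alpha_{L-1}>0$ enters, as it forces the inter-annulus mass fluctuation to be $O(N^{\alpha_1/2-1})$ and the corresponding redistribution cost to be $O(1/N)$, strictly smaller than the leading rate. A secondary technical nuisance is to upgrade Theorem~\ref{teoremone} to a statement uniform in $N_l$ so that expectation over the multinomial counts can be exchanged with the asymptotics—standard concentration estimates for Binomial variables together with deterministic a-priori bounds on $W_2^2$ on a fixed annulus should suffice.
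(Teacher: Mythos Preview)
Your overall architecture coincides with the paper's: prove the upper bound for the semi-discrete problem and the lower bound for the bipartite problem, deduce the remaining two inequalities via Lemma~\ref{semibip}, and in both cases decouple into the annuli $C_l$ by inserting the intermediate measure $\tilde\rho_N=\sum_l\frac{N_l}{N}\frac{\mathbbm 1_{C_l}}{|C_l|}$ and invoking Theorem~\ref{teoremone} on each $C_l$.

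There is, however, a real gap in your treatment of the radial redistribution $W_2^2(\tilde\rho_N,\rho^L_N)$. The \emph{global} estimate $|\phi'|\lesssim N^{\alpha_1/2-1}$ is correct, but it does not give $O(1/N)$ once plugged into Theorem~\ref{benbrefor}, because the weight there is $1/\rho^L_N$, and on $C_l$ one has $1/\rho^L_N\asymp N^{1-\alpha_l}$. With your bound this produces
\[
\int_C\frac{|\phi'|^2}{\rho^L_N}\ \lesssim\ \sum_{l=0}^{L-1}|C_l|\,N^{\alpha_1-2}\cdot N^{1-\alpha_l}
=\sum_{l=0}^{L-1}|C_l|\,N^{\alpha_1-\alpha_l-1},
\]
and for $l\ge 2$ the exponent $\alpha_1-\alpha_l-1$ is strictly larger than $-1$, so the right-hand side is \emph{not} $o(\log N/N)$. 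The point you are missing---and which is exactly the content of the paper's Lemma~\ref{okannuli}---is that the fluctuation is \emph{annulus-local}: for $r\in C_l$ the cumulative mass discrepancy $\int_0^r s[\tilde\rho_N-\rho^L_N]\,ds$ equals $(P_l-\mathbb E P_l)/N$ plus a fraction of $(N_l-\mathbb E N_l)/N$, and since $\mathrm{Var}(P_l)\asymp N\rho^L_N(B_l)(1-\rho^L_N(B_l))\asymp N^{\alpha_l}$ by the monotonicity of the $\alpha_l$'s, one gets $\mathbb E|\phi'(r)|^2\lesssim N^{\alpha_l-2}$ on $C_l$. This local bound cancels $N^{1-\alpha_l}$ exactly, yielding $O(1/N)$ per annulus and hence $\mathbb E\,W_2^2(\tilde\rho_N,\rho^L_N)\le c/N$. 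Your ``Main obstacle'' paragraph gestures at this balance but states it incorrectly (the relevant fluctuation in $C_l$ is $N^{\alpha_l/2-1}$, not $N^{\alpha_1/2-1}$).

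A smaller omission: in the bipartite lower bound the measures $\sum_l\frac{N_l}{N}\nu^{M_l}$ and $\sum_l\frac{M_l}{N}\nu^{M_l}$ do not agree annulus-by-annulus (since $N_l\ne M_l$), so one cannot apply superadditivity of $Wb_2^2$ directly. The paper handles this (Proposition~\ref{lowermulti}) by restricting to the event $A_\theta$ and paying an extra error controlled by \eqref{okannuli4}--\eqref{okannuli6} together with Lemma~\ref{forbipartitematching}; this again relies on the same annulus-local balance above. You should flag this step rather than fold it into ``the same annulus decomposition invoking \eqref{teoremone2}''.
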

Let us notice that also if $\rho^L_N$ is supported on all the disk $C$ the asymptotic cost of the problem (except for a factor $2\pi$ or $4\pi$) is multiplied for $\sum_{l=0}^{L-1}\alpha_l|C_l|$, and
$$
\frac{\log N}{N}|C_0|=\frac{\log N}{N}\alpha_0|C_0|<\frac{\log N}{N}\sum_{l=0}^{L-1}\alpha_l|C_l|<\frac{\log N}{N}\sum_{l=0}^{L-1}|C_l|=|C|\frac{\log N}{N}
$$
therefore the cost is strictly smaller than the cost of the problem with particles distributed with a density bounded from below from a positive constant, as proved in \cite{AGT}. This happens because $\rho^L_N$ is not bounded from below: except for the disk $C_0$ it is everywhere vanishing for large $N$.

We can also notice that
$$
\rho^L_N\xrightarrow[N\to\infty]{\|\|_{\infty}}\frac{\mathbbm{1}_{C_0}}{|C_0|}=\mu_0
$$
while the total cost is strictly larger than the cost of the problem when the particles are distributed with measure $\mu_0$.

Finally, let us notice that the second statement of Theorem \ref{multiscalingthm} is equivalent to
 $$\mathbb{E}[C_N]\sim\frac{1}{2\pi}\int dx \log (N \rho^L_N(x))$$

First we prove the following Lemma, similar to propositions proved in \cite{BC} and \cite{AGT}. It allows to compute the total cost as the sum of the costs of the problems on the annuli. The argument used to estimate the Wasserstein distance between two measures that are not bounded from below is that when we use Benamou-Brenier formula we find a divergent term due to a vanishing denominator. This term in the annulus $C_l$ is balanced from the numerator, which involves the fluctuations of the particles in $C_l$ and in $\cup_{l=0}^{L-1}C_l$, whose order is the same thanks to the choice of the exponents $\alpha_l$.

\begin{lemma}\label{okannuli}
There exists a constant $c>0$ such that if $X_1,\dots,X_N$ and $Y_1,\dots,Y_N$ are independent random variables in $C$ with common distribution $\rho^L_N$, $N_l$ and $M_l$ are respectively the number of points $X_i$ and $Y_i$ in $C_l$, i.e. $N_l:=\sum_{i=1}^N\mathbbm{1}(X_i\in C_l)$ and $M_l:=\sum_{i=1}^N\mathbbm{1}(Y_i\in C_l)$, $\theta$ small enough and
$$
A_{\theta}:=\left\{\begin{array}{lcr}|N_l-N\rho^L_N(C_l)|&\leq&\theta N\rho^L_N(C_l)\\|M_l-N\rho^L_N(C_l)|&\leq&\theta N\rho^L_N(C_l)\end{array}\forall l=0,\dots,L\right\}
$$
it holds
\begin{eqnarray}
\label{okannuli5}&&\mathbb{E}\left[W_2^2\left(\rho^L_N,\sum_{l=0}^{L-1}\frac{N_l}{N}\frac{\rho^L_N\mathbbm{1}_{C_l}}{\rho^L_N(C_l)}\right)\right]\leq\frac{c}{N}
\\
\label{okannuli4}&&\mathbb{E}\left[W_2^2\left(\rho^L_N,\sum_{l=0}^{L-1}\frac{M_l-N_l}{3\theta N}+\frac{M_l}{N}\frac{\rho^L_N\mathbbm{1}_{C_l}}{\rho^L_N(C_l)}\right)\mathbbm{1}_{A_{\theta}}\right]\leq\frac{c}{\theta^2N}
\\
\label{okannuli6}&&\mathbb{E}\left[W_2^2\left(\sum_{l=0}^{L-1}\frac{3\theta M_l}{N}\frac{\rho^L_N\mathbbm{1}_{C_l}}{\rho^L_N(C_l)},\sum_{l=0}^{L-1}\frac{M_l-N_l+3\theta M_l}{N}\frac{\rho^L_N\mathbbm{1}_{C_l}}{\rho^L_N(C_l)}\right)\mathbbm{1}_{A_{\theta}}\right]\leq\frac{c}{\theta N}
\end{eqnarray}
\end{lemma}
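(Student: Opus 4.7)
All three bounds compare two radially symmetric measures on the disk $C$ that are piecewise constant on the annuli $\{C_l\}$, so the uniform strategy is to apply Theorem \ref{benbrefor}: for any comparison $(\mu,\nu)$, solve the Neumann problem $\Delta\phi=\mu-\nu$ on $C$ and bound $W_2^2(\mu,\nu)\leq 4\int_C|\nabla\phi|^2/\mu\,dx$. Because $\mu-\nu$ is radial and piecewise constant, one can take $\phi=\phi(r)$, reducing the PDE to the ODE $(r\phi')'=r(\mu-\nu)$; integrating once gives $r\phi'(r)=\int_0^r s(\mu-\nu)(s)\,ds$, a piecewise quadratic function which is an explicit sum over the annuli $C_k\subset B_r$ plus a partial contribution from the annulus containing $r$.

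To prove \eqref{okannuli5}, set $\tilde\rho:=\sum_l\frac{N_l}{N\rho^L_N(C_l)}\,\rho^L_N\mathbbm{1}_{C_l}$; on $C_l$ the difference $\rho^L_N-\tilde\rho$ is the constant $(N\rho^L_N(C_l)-N_l)/(N|C_l|)$, and the Neumann condition $r\phi'(S)=0$ is automatic because $\sum_l(N\rho^L_N(C_l)-N_l)=0$. Using $\rho^L_N=N^{\alpha_l-1}/|C_l|$ on $C_l$ (for $l\geq 1$) one gets, after the radial integration,
$$\mathbb{E}\int_{C_l}\frac{|\nabla\phi|^2}{\rho^L_N}\,dx\leq \frac{C\,|C_l|}{N^{\alpha_l-1}}\cdot\mathbb{E}\frac{(N_l-N\rho^L_N(C_l))^2}{N^2}\leq \frac{C\,|C_l|}{N},$$
where the last step uses the binomial variance bound $\mathbb{E}[(N_l-N\rho^L_N(C_l))^2]\leq N\rho^L_N(C_l)=N^{\alpha_l}$. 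Since the number of annuli is fixed, summing produces the desired $c/N$.

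The estimates \eqref{okannuli4} and \eqref{okannuli6} follow by the same Poisson/Benamou--Brenier scheme applied to the respective differences, with $M_l-N_l$ now playing the role of the single-sample fluctuation (and $\mathbb{E}[(M_l-N_l)^2]\leq 2N^{\alpha_l}$ by independence). The coefficient $3\theta$ is a positive cushion guaranteeing that each integrand's denominator is bounded below by a multiple of $\theta\rho^L_N$ rather than $\rho^L_N$; this is exactly the role of the indicator $\mathbbm{1}_{A_\theta}$, since on $A_\theta$ one has, for $\theta$ small enough, $(M_l-N_l+3\theta M_l)/(3\theta N)\geq \frac{1}{2}\rho^L_N(C_l)$ and similarly for the other coefficients. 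Each such lower bound introduces a factor $\theta^{-1}$ in the Benamou--Brenier integral, producing the bounds $c/(\theta^2N)$ and $c/(\theta N)$ respectively.

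The delicate point is that in the outer annuli $\rho^L_N\sim N^{\alpha_l-1}\to 0$, so $1/\rho^L_N$ is large; the argument succeeds only because the typical fluctuations $|N_l-N\rho^L_N(C_l)|$ are correspondingly of order $N^{\alpha_l/2}$, and the two scales conspire to yield exactly an $N^{-1}$ contribution per annulus. A related subtlety is that $\phi$ is constructed globally on $C$, not annulus by annulus, so $r\phi'(r)$ inside $C_l$ also carries the cumulative fluctuations from all $C_k$ with $k<l$; the monotonicity $\alpha_0>\alpha_1>\cdots$ of the exponents ensures that the innermost fluctuation $N-N_0=O(\sqrt{N})$ dominates this cumulative sum and that the final bound $c/N$ (respectively $c/(\theta^2 N)$, $c/(\theta N)$) is preserved.
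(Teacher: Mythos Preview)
Your overall strategy---Benamou--Brenier with a radial potential, then the binomial variance bound---is exactly the paper's, and your displayed estimate for the $N_l$ contribution in $C_l$ is correct. The gap is in your treatment of the cumulative term.

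Inside $C_l$ one has $r\phi'(r)=\frac{1}{2\pi}\bigl[(P_l-\mathbb{E}P_l)/N+(\text{partial term in }N_l)\bigr]$ with $P_l:=\sum_{i<l}N_i$, so the Benamou--Brenier integral over $C_l$ contains a term $\tfrac{|C_l|}{N^{\alpha_l-1}}\cdot\tfrac{\mathrm{Var}(P_l)}{N^2}$. Your claim that ``the innermost fluctuation $N-N_0=O(\sqrt{N})$ dominates this cumulative sum'' is both numerically and conceptually wrong: $N-N_0$ has mean $\sim N^{\alpha_1}$ (not $O(\sqrt N)$), and if the fluctuation of $P_l$ were of order $\sqrt{N}$ (or even $N^{\alpha_1/2}$) for every $l$, the contribution in $C_l$ would be $|C_l|\,N^{1-\alpha_l}/N$, which diverges for $l\ge 1$ and destroys the $c/N$ bound. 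What actually saves the estimate is the complementary identity $P_l=N-\sum_{j\ge l}N_j$, so that
\[
\mathrm{Var}(P_l)=N\,\rho^L_N(B_l)\bigl(1-\rho^L_N(B_l)\bigr)\le c\,N\cdot N^{\alpha_l-1}=c\,N^{\alpha_l},
\]
using that $1-\rho^L_N(B_l)=\sum_{j\ge l}N^{\alpha_j-1}\le c\,N^{\alpha_l-1}$ by the monotonicity $\alpha_l>\alpha_{l+1}>\cdots$. Thus $\mathrm{Var}(P_l)$ has the \emph{same} order as $\mathrm{Var}(N_l)$, and the balance with $1/\rho^L_N|_{C_l}$ goes through exactly as in your $N_l$ computation. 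This is the heart of the paper's argument (their inequality $\rho^L_N(C_l)(1-\rho^L_N(C_l))\le c\,\rho^L_N|_{C_l}|C_l|$), and it is what your last paragraph should say.

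A minor remark on \eqref{okannuli4}: the denominator in the Benamou--Brenier bound there is $\rho^L_N$ itself (no $\theta$); the factor $\theta^{-2}$ arises because the fluctuating quantity is $(M_l-N_l)/(3\theta)$, whose variance carries $\theta^{-2}$. Your description attributes both powers of $\theta^{-1}$ to the denominator, which is not where they come from.
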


\begin{proof} Since the proofs are very similar, we only focus on \eqref{okannuli5} and then we explain how to obtain \eqref{okannuli4} and \eqref{okannuli6}.

Let $f_N$ be the weak solution of
$$
\Delta f_N=\rho^L_N-\sum_{l=0}^{L-1}\frac{N_l}{N}\frac{\rho^L_N\mathbbm{1}_{C_l}}{\rho^L_N(C_l)}
$$
then by Theorem \ref{benbrefor} we get
\begin{eqnarray}
\nonumber W_2^2\left(\rho^L_N,\sum_{l=0}^{L-1}\frac{N_l}{N}\frac{\rho^L_N\mathbbm{1}_{C_l}}{\rho^L_N(C_l)}\right)&\leq&4\int_Cdx\frac{|\nabla f_N(x)|^2}{\rho^L_N(x)}
\\
\label{okannuli2}&=&4\sum_{l=0}^{L-1}\int_{C_l}dx|\nabla f_N(x)|^2\frac{1}{\rho^L_N|_{C_l}}
\end{eqnarray}
As explained before, now we are going to prove that, when we take the expectation, the divergent term due to the vanishing density is balanced by the small fluctuations of the particles.

We can find $f_N$ depending only on $|x|$, i.e. in the form
$$
f_N(x)=\int_0^{|x|}dr\frac{1}{r}\int_0^rdss\Delta f_N(s)+constant
$$
then if we define
$$
B_l:=\cup_{i=0}^{l-1}C_i\quad;\quad P_l:=\sum_{i=0}^{l-1}N_i=\sum_{i=1}^N\mathbbm{1}(X_i\in B_l)
$$
and we observe that the factor $2\pi s$ is exactly what we need to write the integral in polar coordinates, we have
\begin{eqnarray}
\nonumber\int_{C_l}dx|\nabla f_N(x)|^2&=&\int_{s_l}^{s_{l+1}}dr\frac{1}{2\pi r}\left(\int_{B_l\cup \{s_l<|x|<r\}}dx\Delta f_N(x)\right)^2
\\
\nonumber&=&\int_{s_l}^{s_{l+1}}dr\frac{1}{2\pi r}\left[\sum_{i=0}^{l-1}\int_{C_i}dx\rho^L_N(x)\left(1-\frac{N_i}{N\rho^L_N(C_l)}\right)\right.
\\
\nonumber&+&\left.\int_{s_l<|x|<r}dx\rho^L_N(x)\left(1-\frac{N_l}{N\rho^L_N(C_l)}\right)\right]^2
\\
\nonumber&=&\int_{C_l}dr\frac{1}{2\pi r}\left[\frac{N\rho^L_N(C_l)-P_l}{N}+\frac{N\rho^L_N(C_l)-N_l}{N}\frac{r^2-s_l^2}{s_{l+1}^2-s_l^2}\right]^2
\\
\label{okannuli3}&\leq& c\left[\frac{(N\rho^L_N(C_l)-P_l)^2}{N^2}+\frac{(N\rho^L_N(C_l)-N_l)^2}{N^2}\right]
\end{eqnarray}
where in the last inequality we have used that if $l=0$ the first summand disappears since $P_l=N\rho^L_N(P_l)=0$ almost everywhere and therefore the function $2\pi r$ in the denominator is multiplied for $r^4$, thus it is integrable.
Moreover, thanks to the choice of $\{\alpha_l\}_{l=0}^{L-1}$ we have
\begin{eqnarray*}
\rho^L_N(C_l)(1-\rho^L_N(C_l))=\underbrace{\frac{N-\sum_{j=l}^{L-1}N^{\alpha_j}}{N}}_{\leq 1}\underbrace{\frac{\sum_{j=l}^{L-1}N^{\alpha_j}}{N}}_{\leq c\frac{N^{\alpha_l}}{N}}\leq c\frac{N^{\alpha_l}}{N}=c\rho^L_N|_{C_l}|C_l|
\end{eqnarray*}
where $c$ depends on $L$. Therefore
\begin{eqnarray}
\label{okannuli1}\mathbb{E}\left[\frac{(N\rho^L_N(C_l)-P_l)^2}{N^2}\right]=\frac{\rho^L_N(C_l)(1-\rho^L_N(C_l))}{N}\leq c\frac{\rho^L_N|_{C_l}|C_l|}{N}
\end{eqnarray}
while
\begin{eqnarray*}
\label{okannuli9}\mathbb{E}\left[\frac{(N\rho^L_N(C_l)-N_l)^2}{N^2}\right]&=&\frac{\rho^L_N(C_l)(1-\rho^L_N(C_l))}{N}\leq c\frac{\rho^L_N|_{C_l}|C_l|}{N}
\end{eqnarray*}
Finally from \eqref{okannuli2}, \eqref{okannuli3} and \eqref{okannuli1} we get
\begin{eqnarray*}
\mathbb{E}\left[W_2^2\left(\rho^L_N,\sum_{l=0}^{L-1}\frac{N_l}{N}\frac{\rho^L_N\mathbbm{1}_{C_l}}{\rho^L_N(C_l)}\right)\right]\leq c\sum_{l=0}^{L-1}\frac{\rho^L_N|_{C_l}|C_l|}{\rho^L_N|_{C_l}N}=\frac{c|C|}{N}
\end{eqnarray*}

Then, the proof of \eqref{okannuli4} is exactly the same as \eqref{okannuli5}.

To obtain \eqref{okannuli6} it is sufficient to observe that in $A_{\theta}$
\begin{eqnarray*}
&&W_2^2\left(\sum_{l=0}^{L-1}\frac{3\theta M_l}{N}\frac{\rho^L_N\mathbbm{1}_{C_l}}{\rho^L_N(C_l)},\sum_{l=0}^{L-1}\frac{M_l-N_l+3\theta M_l}{N}\frac{\rho^L_N\mathbbm{1}_{C_l}}{\rho^L_N(C_l)}\right)
\\
&\leq&6\theta W_2^2\left(\sum_{l=0}^{L-1}\frac{M_l}{N}\frac{\rho^L_N\mathbbm{1}_{C_l}}{\rho^L_N(C_l)},\rho^L_N\right)
\\
&+&6\theta W_2^2\left(\rho^L_N,\sum_{l=0}^{L-1}\frac{M_l-N_l}{3\theta N}+\frac{M_l}{N}\frac{\rho^L_N\mathbbm{1}_{C_l}}{\rho^L_N(C_l)}\right)
\end{eqnarray*}
and then we use \eqref{okannuli5} and \eqref{okannuli4}.

\end{proof}

Now we can prove Theorem \ref{multiscalingthm}.

Thanks to Lemma \ref{semibip} it is sufficient to prove the upper bound for semidiscrete matching, in Proposition \ref{uppermulti}, and the lower bound for bipartite matching, in Proposition \ref{lowermulti}.

The structure of the proofs is the same as Theorem 1 in \cite{BC} and Theorems 1.1 and 1.2 in \cite{AGT}. First, we use the fact that the total transportation cost on the disk $C$ is estimated by the sum of the costs on the annuli $C_l$. This is possible thanks to Lemma \ref{okannuli}. Then we use the fact that the problem on the annulus $C_l$ has been solved in \cite{AGT} (it is a particular case of Theorem 1.1 and 1.2) because the probability density $\rho^L_N$ is piecewise constant on the annuli $C_l$ (and, thus, piecewise bounded from below). Therefore, if $N_l$ is the number of particles in $C_l$, each annulus contributes to the total cost with a term approximated by
$$
|C_l|\frac{\log N_l}{N_l}\approx|C_l|\frac{\log \mathbb{E}(N_l)}{N_l}
$$
except for a factor $4\pi$ or $2\pi$ in semidiscrete and bipartite matching respectively. The total cost is a convex combination of all these terms, so the main contributions (avoiding the factors $4\pi$ and $2\pi$) turns out to be
$$
\sum_{l=0}^{L-1}|C_l|\frac{\log\mathbb{E}(N_l)}{N}\approx\frac{\log N}{N}\sum_{l=0}^{L-1}\alpha_l|C_l|
$$
\begin{proposition}\label{uppermulti}
Let $X_1,\dots,X_N$ be independent random variables in $C$ with common distribution $\rho^L_N$. Then
$$
\limsup_{N\to\infty}\frac{N}{\log N}\mathbb{E}\left[W_2^2\left(\frac{1}{N}\sum_{i=1}^N\delta_{X_i},\rho^L_N\right)\right]\leq\frac{1}{4\pi}\sum_{l=0}^{L-1}\alpha_l|C_l|
$$
\end{proposition}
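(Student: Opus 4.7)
The plan is to compare $\mu^N := \frac{1}{N}\sum_{i=1}^N \delta_{X_i}$ with $\rho^L_N$ through the piecewise-uniform intermediate measure
$$\eta^N := \sum_{l=0}^{L-1} \frac{N_l}{N}\,\frac{\rho^L_N \mathbbm{1}_{C_l}}{\rho^L_N(C_l)},$$
which agrees with $\mu^N$ on the mass it places on each annulus $C_l$. Using the refined triangle inequality
$$W_2^2(\mu^N,\rho^L_N) \leq (1+\varepsilon)\,W_2^2(\mu^N,\eta^N) + (1+\varepsilon^{-1})\,W_2^2(\eta^N,\rho^L_N),$$
combined with estimate \eqref{okannuli5} of Lemma \ref{okannuli}, which yields $\mathbb{E}[W_2^2(\eta^N,\rho^L_N)] \leq c/N = o(\log N/N)$, it is enough to establish
$$\limsup_{N\to\infty}\frac{N}{\log N}\,\mathbb{E}[W_2^2(\mu^N,\eta^N)] \leq \frac{1}{4\pi}\sum_{l=0}^{L-1}\alpha_l|C_l|,$$
and then let $\varepsilon\to 0$.

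Because $\mu^N$ and $\eta^N$ carry the same mass $N_l/N$ on each $C_l$, an admissible transport plan transfers mass only within each annulus, giving
$$W_2^2(\mu^N,\eta^N) \leq \sum_{l=0}^{L-1} \frac{N_l}{N}\,W_2^2(\tilde{\mu}^N_l,\tilde{\nu}_l),$$
where $\tilde{\mu}^N_l$ is the normalized empirical measure of the $N_l$ points $X_i\in C_l$ and $\tilde{\nu}_l$ is the uniform probability density on $C_l$. Since $\rho^L_N$ is constant on $C_l$, conditionally on $\{N_l=n\}$ those $n$ points are iid $\tilde{\nu}_l$-distributed. I then apply Theorem \ref{teoremone} to the bounded connected Lipschitz domain $C_l$ with the Hölder (in fact constant) and uniformly positive density $\tilde{\nu}_l$, obtaining $nf_l(n)/\log n \to |C_l|/(4\pi)$, where $f_l(n):=\mathbb{E}[W_2^2(\tilde{\mu}^N_l,\tilde{\nu}_l)\mid N_l=n]$.

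It remains to take the outer expectation, using that $N_l$ is $\mathrm{Binomial}(N,p_l)$ with mean $N^{\alpha_l}$ for $l\geq 1$ (and of order $N$ for $l=0$). For any $\varepsilon>0$ there exists $n_0$ such that $nf_l(n)\leq (1+\varepsilon)\tfrac{|C_l|}{4\pi}\log n$ for all $n\geq n_0$, while $f_l(n)\leq \operatorname{diam}(C_l)^2$ holds trivially for $n\geq 1$. Combining this with Chebyshev's inequality — giving $N_l/\mathbb{E}[N_l]\to 1$ in probability and hence $\mathbb{P}(N_l<n_0)\to 0$ — and dominated convergence based on $\log N_l\leq \log N$, one gets $\mathbb{E}[\log N_l\,\mathbbm{1}_{N_l\geq 1}]/\log N \to \alpha_l$, and thus
$$\limsup_{N\to\infty}\frac{1}{\log N}\,\mathbb{E}[N_l\, f_l(N_l)] \leq (1+\varepsilon)\frac{\alpha_l\,|C_l|}{4\pi}.$$
Summing over $l$ and letting $\varepsilon\to 0$ concludes the argument. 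The delicate point is exchanging the asymptotic of $f_l(n)$ with the outer expectation: this needs both the uniform upper bound on $nf_l(n)$ extracted from Theorem \ref{teoremone} and the binomial concentration of $N_l$ around $N^{\alpha_l}$ to dispose of the rare small-$N_l$ regime.
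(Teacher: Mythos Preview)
Your proof is correct and essentially identical to the paper's: same intermediate measure $\eta^N$, same use of \eqref{okannuli5} for the cross term, same convexity splitting into annuli, and same appeal to Theorem~\ref{teoremone} on each $C_l$. The only cosmetic differences are that the paper truncates at $\{N_l\geq \mathbb{E}(N_l)/2\}$ rather than at a fixed threshold $n_0$, and obtains $\mathbb{E}[\log N_l]\leq \alpha_l\log N$ in one line from Jensen's inequality (concavity of $\log$) rather than via convergence in probability plus bounded convergence.
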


\begin{proof} Let $N_l$ be the number of points $X_i$ in $C_l$, i.e. $N_l:=\sum_{i=1}^N\mathbbm{1}(X_i\in C_l)$. Then, if we define
\begin{eqnarray*}
\frac{1}{N}\sum_{i=1}^N\delta_{X_i}=\sum_{l:N_l>0}\frac{N_l}{N}\frac{1}{N_l}\sum_{i: X_i\in C_l}\delta_{X_i}=:\sum_{l:N_l>0}\frac{N_l}{N}\mu^{N_l}
\end{eqnarray*}
Hence, thanks to triangle inequality and convexity of quadratic Wasserstein distance, if $\beta>0$
\begin{eqnarray}
\nonumber W_2^2\left(\frac{1}{N}\sum_{i=1}^N\delta_{X_i},\rho^L_N\right)&=&W_2^2\left(\sum_{l:N_l>0}\frac{N_l}{N}\mu^{N_l},\rho^L_N\right)
\\
\nonumber&\leq&(1+\beta)\sum_{l:N_l>0}\frac{N_l}{N}W_2^2\left(\mu^{N_l},\frac{\rho^L_N\mathbbm{1}_{C_l}}{\rho^L_N(C_l)}\right)
\\
\label{uppermulti1}&+&\frac{1+\beta}{\beta}W_2^2\left(\sum_{l=0}^{L-1}\frac{N_l}{N}\frac{\rho^L_N\mathbbm{1}_{C_l}}{\rho^L_N(C_l)},\rho^L_N\right)
\end{eqnarray}
and, since $\beta>0$ is arbitrary, combining \eqref{uppermulti1} with \eqref{okannuli5} of Lemma \ref{okannuli} we get
\begin{eqnarray}
&&\nonumber\limsup_{N\to\infty}\frac{N}{\log N}\mathbb{E}\left[W_2^2\left(\frac{1}{N}\sum_{i=1}^N\delta_{X_i},\rho^L_N\right)\right]
\\
\label{uppermulti10}&\leq&\limsup_{N\to\infty}\frac{N}{\log N}\mathbb{E}\left[\sum_{l:N_l>0}\frac{N_l}{N}W_2^2\left(\mu^{N_l},\frac{\rho^L_N\mathbbm{1}_{C_l}}{\rho^L_N(C_l)}\right)\right]
\end{eqnarray}
Let now $A_l$ be defined as
$$
A_l:=\left\{N_l\geq \frac{N\rho^L_N(C_l)}{2}\right\}=\left\{N_l\geq\frac{\mathbb{E}(N_l)}{2}\right\}
$$
We can compute the expected value in \eqref{uppermulti10} separately in the sets $A_l^c$ and $A_l$.

In $A_l^c$ we have the bound
\begin{eqnarray}
\label{uppermulti3}\frac{N_l}{N}W_2^2\left(\mu^{N_l},\frac{\rho^L_N\mathbbm{1}_{C_l}}{\rho^L_N(C_l)}\right)\leq c\rho^L_N(C_l)
\end{eqnarray}
while
\begin{eqnarray}
\label{uppermulti4}\mathbb{P}(A_l^c)\leq\mathbb{P}\left(|N_l-N\rho^L_N(C_l)|\geq\frac{\rho^L_N(C_l)}{2}\right)\leq\frac{4}{N\rho^L_N(C_l)}
\end{eqnarray}
Using \eqref{uppermulti3} and \eqref{uppermulti4} we get
\begin{eqnarray}
\label{uppermulti5}\mathbb{E}\left[\sum_{l:N_l>0}\frac{N_l}{N}W_2^2\left(\mu^{N_l},\frac{\rho^L_N\mathbbm{1}_{C_l}}{\rho^L_N(C_l)}\right)\mathbbm{1}_{A_l^c}\right]\leq\frac{c}{N}
\end{eqnarray}
Therefore we can limit ourselves to consider the expected value in $A_l$: we use the properties of the conditioned expected value and \eqref{teoremone1} of Theorem \ref{teoremone}.

Indeed, since $\min_{l=0,\dots,L-1}N\rho^L_N(C_l)\xrightarrow[N\to\infty]{}\infty$ there exists a function $\omega(N)\xrightarrow[N\to\infty]{}0$ such that
\begin{eqnarray}
\nonumber&\mathbb{E}&\left[\sum_{l:N_l>0}\frac{N_l}{N}W_2^2\left(\mu^{N_l},\frac{\rho^L_N\mathbbm{1}_{C_l}}{\rho^L_N(C_l)}\right)\mathbbm{1}_{A_l}\right]
\\
\nonumber&=&\mathbb{E}\left[\sum_{l=0}^{L-1}\frac{N_l}{N}\mathbbm{1}_{A_l}\mathbb{E}_{N_l}\left[W_2^2\left(\mu^{N_l},\frac{\rho^L_N\mathbbm{1}_{C_l}}{\rho^L_N(C_l)}\right)\right]\right]
\\
\label{uppermulti6}&\leq&\mathbb{E}\left[\sum_{l=0}^{L-1}\frac{\log N_l}{4\pi N}|C_l|(1+\omega(N))\right]
\end{eqnarray}
Finally, we use the concavity of the function $\log x$ to observe that
\begin{eqnarray}
\label{uppermulti8}\mathbb{E}\left[\sum_{l=0}^{L-1}\frac{\log N_l}{4\pi N}|C_l|\right]\leq\sum_{l=0}^{L-1}\frac{\log\mathbb{E}(N_l)}{4\pi N}|C_l|\leq\frac{\log N}{4\pi N}\sum_{l=0}^{L-1}\alpha_l|C_l|
\end{eqnarray}
Thus, using \eqref{uppermulti5}, \eqref{uppermulti6} and \eqref{uppermulti8} we obtain
\begin{eqnarray*}
\mathbb{E}\left[\sum_{l:N_l>0}\frac{N_l}{N}W_2^2\left(\mu^{N_l},\frac{\rho^L_N\mathbbm{1}_{C_l}}{\rho^L_N(C_l)}\right)\right]\leq\frac{\log N}{4\pi N}\sum_{l=0}^{L_1}\alpha_l|C_l|(1+\omega(N))+\frac{c}{N}
\end{eqnarray*}
and combining this with \eqref{uppermulti10} we obtain the thesis.

\end{proof}

\begin{proposition}\label{lowermulti}
Let $X_1,\dots,X_N$ and $Y_1,\dots,Y_N$ independent random variables in $C$ with common distribution $\rho^L_N$. Then it holds
$$
\liminf_{N\to\infty}\frac{N}{\log N}\mathbb{E}\left[W_2^2\left(\frac{1}{N}\sum_{i=1}^N\delta_{X_i},\frac{1}{N}\sum_{i=1}^N\delta_{Y_i}\right)\right]\geq\frac{1}{2\pi}\sum_{l=0}^{L-1}\alpha_l|C_l|
$$
\end{proposition}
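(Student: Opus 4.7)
The plan is to mirror Proposition \ref{uppermulti} with three structural changes: replace the localization on each annulus by the boundary Wasserstein $Wb_2^2$, invoke the bipartite limit \eqref{teoremone2} in place of the semidiscrete limit \eqref{teoremone1}, and use the concentration of $N_l$ near its mean (instead of Jensen concavity) to pass from $\mathbb{E}[\log N_l]$ back to $\log\mathbb{E}[N_l]$ in the reverse direction.

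First I would restrict the optimal transport plan for $W_2(\mu^N,\nu^N)$ to each annulus to obtain the annular lower bound
$$
W_2^2\!\left(\tfrac{1}{N}\sum_{i=1}^N\delta_{X_i},\,\tfrac{1}{N}\sum_{i=1}^N\delta_{Y_i}\right)\;\geq\;\sum_{l=0}^{L-1}Wb_2^2\!\left(\mu^N|_{C_l},\,\nu^N|_{C_l}\right),
$$
where on the right the boundary version is taken with $\Omega=C_l$. The justification is that any mass of the optimal plan leaving $C_l$ must cross $\partial C_l$ and so pays at least as much as if it had stopped at the boundary; this subadditivity is exactly the argument of \cite{AGT}. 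On the complement of the event $A_\theta$ from Lemma \ref{okannuli} the trivial bound $W_2^2\geq 0$ suffices, since $\mathbb{P}(A_\theta^c)\to 0$ and the overall cost is $O(\log N/N)$.

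Next, on $A_\theta$ and conditioning on $(N_l,M_l)$, the points $X_i,Y_j\in C_l$ are iid uniform on $C_l$ because $\rho_N^L$ is constant there, so Theorem \ref{teoremone} \eqref{teoremone2} with $\Omega=C_l$ would yield
$$
\mathbb{E}_{N_l,M_l}\!\left[Wb_2^2(\mu^{N_l},\nu^{M_l})\right]\;\sim\;\frac{|C_l|}{4\pi N_l}\!\left(1+\tfrac{N_l}{M_l}\right)\log N_l,
$$
for the normalized probability measures $\mu^{N_l}:=\frac{1}{N_l}\sum_{X_i\in C_l}\delta_{X_i}$ and $\nu^{M_l}:=\frac{1}{M_l}\sum_{Y_j\in C_l}\delta_{Y_j}$. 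Since $\mu^N|_{C_l}$ and $\nu^N|_{C_l}$ carry the slightly unequal masses $N_l/N$ and $M_l/N$, I would rescale to $(N_l/N)\mu^{N_l}$ and absorb the $(M_l-N_l)$ imbalance using \eqref{okannuli6} of Lemma \ref{okannuli}, whose $O(1/(\theta N))$ cost is negligible compared with $\log N/N$.

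Finally, summing over $l$, using $N_l/M_l\to 1$ on $A_\theta$ as $N\to\infty$ (so $(1+N_l/M_l)/4\pi\to 1/(2\pi)$) together with the concentration estimate $\log N_l\geq (1-o(1))\alpha_l\log N$ on $A_\theta$ (from $N_l\in[(1-\theta)N^{\alpha_l},(1+\theta)N^{\alpha_l}]$, the lower-bound analogue of the concavity step \eqref{uppermulti8}), I would obtain
$$
\mathbb{E}[W_2^2(\mu^N,\nu^N)]\;\geq\;\frac{\log N}{2\pi N}\sum_{l=0}^{L-1}\alpha_l|C_l|(1-o(1))-O\!\left(\tfrac{1}{\theta N}\right);
$$
dividing by $\log N/N$, sending $N\to\infty$, and then $\theta\to 0$ closes the argument. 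The main technical obstacle is the first step, the annular lower bound via $Wb_2^2$: restricting the optimal plan and bounding the boundary-crossing cost requires delicate bookkeeping of mass balance, and is precisely what makes the boundary Wasserstein framework \eqref{figalligigli} indispensable in place of the ordinary $W_2$, since otherwise the unequal masses of $\mu^N|_{C_l}$ and $\nu^N|_{C_l}$ would render the localization inadmissible.
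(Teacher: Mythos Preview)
Your outline matches the paper's proof almost exactly: restrict to $A_\theta$, invoke superadditivity of $Wb_2^2$ to localize on the annuli, apply \eqref{teoremone2} of Theorem~\ref{teoremone} for the main term, and let $\theta\to 0$ at the end. The paper chooses $\theta=1/\sqrt{\log N}$ rather than a fixed $\theta$, but your two-step limit works just as well.

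The one imprecision is your handling of the mass imbalance. You propose to ``absorb the $(M_l-N_l)$ imbalance using \eqref{okannuli6}'', but \eqref{okannuli6} compares two \emph{continuous} densities, not the empirical measures that actually appear. The paper does not rebalance locally; instead it first applies a global reverse triangle inequality to replace $\nu^N=\sum_l\frac{M_l}{N}\nu^{M_l}$ by $\sum_l\frac{N_l}{N}\nu^{M_l}$ (so that the masses match on each $C_l$), and only then localizes by superadditivity. The resulting error term is
\[
W_2^2\!\left(\sum_l\tfrac{N_l}{N}\nu^{M_l},\sum_l\tfrac{M_l}{N}\nu^{M_l}\right)\mathbbm{1}_{A_\theta},
\]
which is controlled not by \eqref{okannuli6} alone but by Lemma~\ref{forbipartitematching}: that lemma produces \emph{two} contributions, a term of order $\theta\cdot\frac{\log N}{N}\sum_l\alpha_l|C_l|$ (coming from transporting empirical to reference on each annulus) plus the density-level term bounded by \eqref{okannuli6}. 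Both vanish after dividing by $\log N/N$ and sending $\theta\to 0$, so your conclusion is correct, but the route through the imbalance is a bit longer than you indicate and (okannuli6) is only one ingredient.
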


\begin{proof} \textit{(Sketch)} Since the proof is quite similar to the previous one, we only explain the differences.

First, we can restrict to a special set
$$
A_{\theta}:=\left\{\begin{array}{lcr}|N_l-N\rho^L_N(C_l)|&\leq&\theta N\rho^L_N(C_l)\\|M_l-N\rho^L_N(C_l)|&\leq&\theta N\rho^L_N(C_l)\end{array}\qquad\forall l=0,\dots,L\right\}
$$
where $\theta=\frac{1}{\sqrt{\log N}}$. Its complementary has small probability. Then, if $N_l$ and $M_l$ are respectively the number of particles $X_i$ and $Y_i$ in $C_l$, we rename
$$
\mu^{N_l}:=\frac{1}{N_l}\sum_{i:X_i\in C_l}\delta_{X_i}\qquad;\qquad\nu^{M_l}:=\frac{1}{M_l}\sum_{i:Y_i\in C_l}\delta_{Y_i}
$$
Using the superadditivity of $Wb_2^2$, we obtain
\begin{eqnarray}
\nonumber&\mathbb{E}&\left[W_2^2\left(\frac{1}{N}\sum_{i=1}^N\delta_{X_i},\frac{1}{N}\sum_{i=1}^N\delta_{Y_i}\right)\right]
\\
\label{lowermulti5}&\geq&(1-\beta)\mathbb{E}\left[\sum_{l=0}^{L-1}\frac{N_l}{N}\mathbb{E}\left[Wb_2^2(\mu^{N_l},\nu^{M_l})\right]\mathbbm{1}_{A_{\theta}}\right]
\\
\label{lowermulti6}&-&\frac{1-\beta}{\beta}\mathbb{E}\left[W_2^2\left(\sum_{l=0}^{L-1}\frac{N_l}{N}\nu^{M_l},\sum_{l=0}^{L-1}\frac{M_l}{N}\nu^{M_l}\right)\mathbbm{1}_{A_{\theta}}\right]
\end{eqnarray}
The main contribution is given by the term in \eqref{lowermulti5}, and as in the previous Theorem it can be estimated using \eqref{teoremone2} of Theorem \ref{teoremone}.

To prove that the term in \eqref{lowermulti6} is negligible, it is sufficient to recall Lemmas \ref{forbipartitematching} and \ref{okannuli}.

Letting $\beta\to0$, we get the thesis.
\end{proof}

Now we have proved Theorem \ref{multiscalingthm}. Let us observe that we can choose the exponents $\alpha_l$ and the annuli $C_l$ in an interesting way. If 
$$
0=c_0<c_1<\dots<c_{L-1}<c_L=1\quad;\quad s_l=c_lS
$$
and 
$$
\alpha_l=1-c_l^2\quad;\quad l=0,\dots,L
$$
we obtain
$$
\sum_{l=0}^{L-1}\alpha_l|C_l|=\pi S^2\sum_{l=0}^{L-1}\alpha_l(c_{l+1}^2-c_l^2)=\pi S^2\sum_{l=0}^{L-1}\alpha_l(\alpha_l-\alpha_{l+1})
$$
that is a Riemann sum for the function $f(x)=\pi S^2x$. Therefore, if we decrease $\max_{l=0,\dots,L-1}\{\alpha_l-\alpha_{l+1}\}$, in the limit we obtain
$$
\pi S^2\int_0^1dxx=\frac{\pi S^2}{2}
$$
In particular, the case $S=\sqrt{2\log N}$ introduces us to Section \ref{sec3}, indeed the Gaussian density has the following property: if $\alpha_0=1>\alpha_1>\dots>\alpha_L=0$, the annulus $A_l$ defined by
$$
A_l:=\{\sqrt{2(1-\alpha_l)\log N}\leq|x|\leq\sqrt{2(1-\alpha_{l+1})\log N}\}
$$
has measure $2\pi(\alpha_l-\alpha_{l+1})\log N$. Moreover the averaged number of particles in $A_l$ is $N^{\alpha_l}-N^{\alpha_{l+1}}\sim N^{\alpha_l}$, therefore it contributes to the total cost with
$$
\frac{1}{2\pi}2\pi(\alpha_l-\alpha_{l+1})\log N\log[N^{\alpha_l}]=\alpha_l(\alpha_l-\alpha_{l+1})(\log N)^2
$$
and by summing over $l=0,\dots,L-1$ and decreasing $\max_{l=0,\dots,L-1}\{\alpha_l-\alpha_{l+1}\}$ we get
$$
\frac{1}{2}(\log N)^2
$$
Notwithstanding this, the case of the Gaussian density have some further difficulties.

The first one is that what we have proved in the case of the multiscaling density depends on the number of the annuli we are considering, while we are currently not able to approximate the gaussian density with a piecewise multiscaling density on a finite number of annuli. Otherwise, if we consider a countable set of annuli,  we need a uniform bound for the cost of the problem on an annulus. Instead, when dividing the disk of radius $\sqrt{2\log N}$ into squares, using a rescaling argument, we only need a bound for the cost of the problem on a square (and we already have it from \cite{AST}). Moreover, the main property we use here is not that the Gaussian density is a radial density, but rather that it is a product measure, i.e. a function of $x_1$ times a function of $x_2.$
\section{The Gaussian density}\label{sec3}

This Section concerns the problem of $X_1,\dots,X_N$ and $Y_1,\dots,Y_N$ independent random variables in $\mathbb{R}^2$ distributed according to Gaussian measure $\rho$, that is
$$
\rho(x):=\mu(x_1)\mu(x_2)\quad;\quad\mu(z):=\frac{e^{-\frac{z^2}{2}}}{\sqrt{2\pi}}
$$
In Subsection \ref{subsec7} we prove the following theorem.
\begin{theorem}\label{gauxthm}
If $X_1,\dots,X_N$ and $Y_1,\dots,Y_N$ are iid random variables distributed with the Gaussian density $\rho$, it holds 
\begin{eqnarray*}
&&\frac{N}{(\log N)^2}\mathbb{E}\left[W_2^2\left(\frac{1}{N}\sum_{i=1}^N\delta_{X_i},\rho\right)\right]\xrightarrow[N\to\infty]{}\frac{1}{4}
\\
&&\frac{N}{(\log N)^2}\mathbb{E}\left[W_2^2\left(\frac{1}{N}\sum_{i=1}^N\delta_{X_i},\frac{1}{N}\sum_{i=1}^N\delta_{Y_i}\right)\right]\xrightarrow[N\to\infty]{}\frac{1}{2}
\end{eqnarray*}
\end{theorem}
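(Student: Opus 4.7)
The plan is to carry the multiscaling strategy of Section \ref{sec2} to an $N$-dependent fine partition of the plane that exploits the product structure $\rho(x_1,x_2)=\mu(x_1)\mu(x_2)$. The Riemann sum computation sketched at the end of Section \ref{sec2} already identifies the target: the heuristic $\sum_{\ell}(\text{area of level cell})\cdot(\text{log of typical occupancy})$ should converge to
\[
\frac{1}{4\pi}\int_{\mathbb{R}^2}[\log(N\rho(x))]_+\,dx,
\]
and for $\rho(x)=\frac{1}{2\pi}e^{-|x|^2/2}$ a direct computation shows this integral equals $\pi(\log N)^2$, yielding $\tfrac14(\log N)^2$ in the semidiscrete case and, after the factor-of-two jump from Lemma \ref{semibip}, $\tfrac12(\log N)^2$ in the bipartite case. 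Thanks to Lemma \ref{semibip} it suffices, as in Section \ref{sec2}, to prove the upper bound for semidiscrete matching and the lower bound for bipartite matching.

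First I would localize to the square $Q_N:=[-R_N,R_N]^2$ with $R_N\sim\sqrt{2\log N}$, the natural edge at which $N\rho\sim 1$. The contribution of $\mathbb{R}^2\setminus Q_N$ to $W_2^2$ can be absorbed using Talagrand's inequality (Theorem \ref{talfor}) applied to the Gaussian restricted to the tail: the expected mass there is polynomially small and the relative entropy estimate gives $o((\log N)^2/N)$. Inside $Q_N$ I would partition each axis into intervals $I_j$ (with $\mu(I_j)$ slowly varying), producing a product grid $\{R_{j,k}=I_j\times I_k\}$ on which $\rho$ is approximately constant, fine enough that each cell satisfies $N\rho(R_{j,k})\to\infty$ but coarse enough that Theorem \ref{teoremone} applies uniformly on the rescaled cell. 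The product structure is essential here: it allows the grid to be chosen as a tensor product and, more importantly, it allows the Poisson problem in the analogue of Lemma \ref{okannuli} to be solved by separation of variables.

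For the semidiscrete upper bound I would condition on the cell occupations $N_{j,k}:=\#\{X_i\in R_{j,k}\}$ and split the cost via triangle inequality and convexity of $W_2^2$, as in \eqref{uppermulti1}, into an intra-cell piece and a global fluctuation piece. The fluctuation piece is controlled by a product-grid analogue of Lemma \ref{okannuli}: the Benamou--Brenier estimate of Theorem \ref{benbrefor} is applied on $Q_N$ and the divergent $1/\rho$ factor is compensated by $\mathrm{Var}(N_{j,k})/N^{2}\sim\rho(R_{j,k})/N$, exactly as in \eqref{okannuli3}--\eqref{okannuli1}. On each cell the intra-cell cost is given by \eqref{teoremone1} of Theorem \ref{teoremone} after rescaling, producing $|R_{j,k}|\log N_{j,k}/(4\pi N)$ up to a $(1+o(1))$ factor. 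Summing, using concavity of $\log$ and $\mathbb{E}(N_{j,k})=N\rho(R_{j,k})$, the main contribution is $\frac{1}{4\pi N}\sum_{j,k}|R_{j,k}|\log(N\rho(R_{j,k}))$, a Riemann sum for the integral above that equals $(\log N)^2/(4N)\cdot(1+o(1))$. The bipartite lower bound follows the same blueprint, restricting to a good set $A_\theta$ with $\theta=1/\sqrt{\log N}$ (as in Proposition \ref{lowermulti}), using superadditivity of $Wb_2^2$ in place of the triangle inequality, and applying \eqref{teoremone2} of Theorem \ref{teoremone} on each cell to pick up the factor $1+q=2$.

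The main obstacle will be the very edge of $Q_N$, where cells have $N\rho(R_{j,k})\to\infty$ but too slowly for the $o(1)$ in Theorem \ref{teoremone} to be uniform across all $\sim(\log N)^2$ cells simultaneously. An $N$-dependent choice of grid (coarser near the edge, finer in the bulk) together with an additional outer truncation layer absorbed via Lemma \ref{semibip} and Theorem \ref{talfor} should handle this, but checking that the resulting errors remain $o((\log N)^2/N)$ once summed over all cells is the delicate point. A second technical point is the concentration of the ratios $N_{j,k}/M_{j,k}$ around $1$ uniformly in $(j,k)$ needed to apply \eqref{teoremone2} in the bipartite argument; this is precisely what the set $A_\theta$ with $\theta=1/\sqrt{\log N}$ is designed to ensure, following the pattern in the proof of Proposition \ref{lowermulti}.
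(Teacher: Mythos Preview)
Your outline is correct and matches the paper's proof in structure and tools: truncation via Talagrand, product grid of small squares, conditioning plus subadditivity for the upper bound, $Wb_2^2$ superadditivity on a good event for the lower bound, and a Riemann sum for $\int[\log(N\rho)]_+$. The paper's implementation differs from your sketch in three technical points worth noting. First, the truncation radius is $r_N=\sqrt{2\log(N/(\log N)^\alpha)}$ with $1<\alpha<2$ rather than $\sqrt{2\log N}$; this guarantees $\mathbb{E}(N^j_k)\gtrsim(\log N)^{\alpha-1}\to\infty$ uniformly over all cells and replaces your proposed variable grid near the edge. Second, the fluctuation piece (Proposition \ref{oksquares}) is not a single two-dimensional Benamou--Brenier but a two-step argument: squares to horizontal rectangles via a one-dimensional Benamou--Brenier in $x_1$ (using Lemma \ref{productproperty} to drop the common $x_2$ marginal), then rectangles to $\rho$ via Talagrand's inequality again. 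Third, the good event uses $\theta=(\log N)^{-\xi}$ with $0<\xi<(\alpha-1)/2$, not $1/\sqrt{\log N}$, so that Chernoff summed over $\sim(\log N)^2$ cells each with occupancy only $\sim(\log N)^{\alpha-1}$ still forces $\mathbb{P}(A_\theta^c)\to0$.
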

First, we underline that also if Gaussian density has an unbounded support, the number of particles in a cube of side $dx$ is $Ne^{-{\frac{|x|^2}{2}}}dx$, and we can notice that $Ne^{-\frac{|x|^2}{2}}$ is strictly smaller than 1 when $|x|>\sqrt{2\log N}$. Therefore, using the results in \cite{BC} and \cite{AGT}, we can suppose that the cost for semidiscrete and bipartite matching is
$$
\frac{1}{N}\int_{|x|\leq\sqrt{2\log N}}dx\log(N\rho(x))=\pi\frac{(\log N)^2}{N}+\frac{{\cal O}(1)}{N}
$$
except for a factor $\frac{1}{4\pi}$ or $\frac{1}{2\pi}$ respectively.

To achieve these result, we apply a cut-off and we substitute $\rho$ with a density that we will call again $\rho_N$ and whose support is contained in $\{|x|\leq\sqrt{2\log N}\}$. To define $\rho_N$, we proceed in the following way.  We cannot arrive exactly at $\sqrt{2\log N}$, otherwise there would be too few particles close to the boundary of $\{|x|\leq\sqrt{2\log N}\}$, therefore we define
$$
r_N:=\sqrt{2\log\left(\frac{N}{(\log N)^{\alpha}}\right)}\quad;\quad1<\alpha<2
$$
and we construct a collection of squares that covers $\{|x|\leq r_N\}$, in this way
\begin{eqnarray*}
{\cal J}&:=&\left\{\left.(a_j,a_{j+1}):=\left(\frac{j\epsilon}{r_N},\frac{(j+1)\epsilon}{r_N}\right)\right|j\in\mathbb{Z}\right\}
\\
{\cal K}&:=&\left\{\left.(b_k,b_{k+1}):=\left(\frac{k\epsilon}{r_N},\frac{(k+1)\epsilon}{r_N}\right)\right|k\in\mathbb{Z}\right\}
\end{eqnarray*}
${\cal J}$ is a set of intervals in direction $x_1$, while ${\cal K}$ is a set of intervals in direction $x_2$. Now we define a set of squares that covers $\{|x|\leq r_N\}$, as follows. First, we denote by $k_{\min}$ and $k_{\max}$ and by $j_k^{\min}$ and $j_k^{\max}$
\begin{eqnarray*}
k_{\min}&:=&-\left\lfloor\frac{r_N^2}{\epsilon}\right\rfloor-1\quad;\quad k_{\max}:=\left\lfloor\frac{r_N^2}{\epsilon}\right\rfloor
\\
j_k^{\min}&:=&\inf\{j\in\mathbb{Z}|(a_j,a_{j+1})\times(b_k,b_{k+1})\cap\{|x|\leq r_N\}\neq\emptyset\}
\\
j_k^{\max}&:=&\sup\{j\in\mathbb{Z}|(a_j,a_{j+1})\times(b_k,b_{k+1})\cap\{|x|\leq r_N\}\neq\emptyset\}
\end{eqnarray*}
and then we define ${\cal Q}$ as the minimal set of squares that covers $\{|x|\leq r_N\}$
\begin{eqnarray*}
{\cal Q}:=\left\{Q^j_k:=(a_j,a_{j+1})\times(b_k,b_{k+1})\left|\begin{array}{lc}k_{\min}\leq k\leq k_{\max}\\j_k^{\min}\leq j\leq j_k^{\max}\end{array}\right\}\right.
\end{eqnarray*}
\begin{figure}[h!]
\centering
\includegraphics[width=7cm]{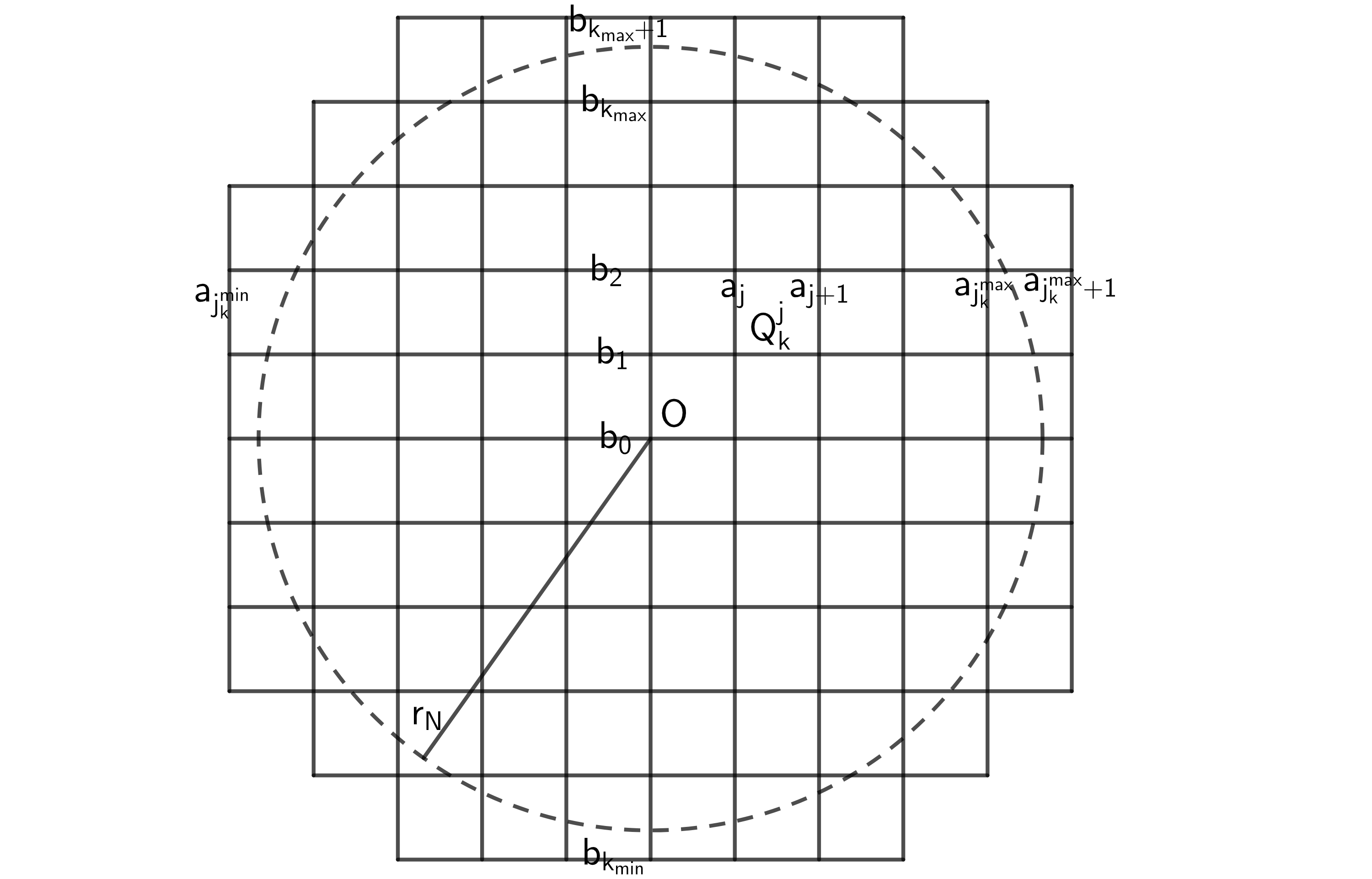}
\caption{The set of squares $Q^j_k$ where the cut-off is applied.}
\end{figure}
Before going on, here we can notice that, thanks to the choice of the squares, if $N^j_k$ is the (random) number of points in the square $Q^j_k$ when the distribution of the particles is Gaussian (after having applied the cut-off, the expectation of this number can only increase) we have
\begin{eqnarray*}
\mathbb{E}(N^j_k)=N\int_{Q^j_k}dx\rho(x)\geq N\left(\frac{\epsilon}{r_N}\right)^2\frac{e^{-\frac{\left(r_N+\frac{\epsilon\sqrt{2}}{r_N}\right)^2}{2}}}{\sqrt{2\pi}}\geq c\epsilon^2(\log N)^{\alpha-1}\xrightarrow[N\to\infty]{}\infty
\end{eqnarray*}
Let also ${\cal R}$ be set of horizontal rectangles that covers $\{|x|\leq r_N\}$
$$
{\cal R}:=\left\{R_k:=\bigcup_{j_k^{\min}\leq j\leq j_k^{\max}} Q^j_k\right\}
$$
with projections $J^k$ on the axis $x_1$, where each $J^k$ is defined by
$$
J^k:=\bigcup_{j_k^{\min}\leq j\leq j_k^{\max}} (a_j,a_{j+1})
$$
\begin{figure}[h!]
\centering
\includegraphics[width=7cm]{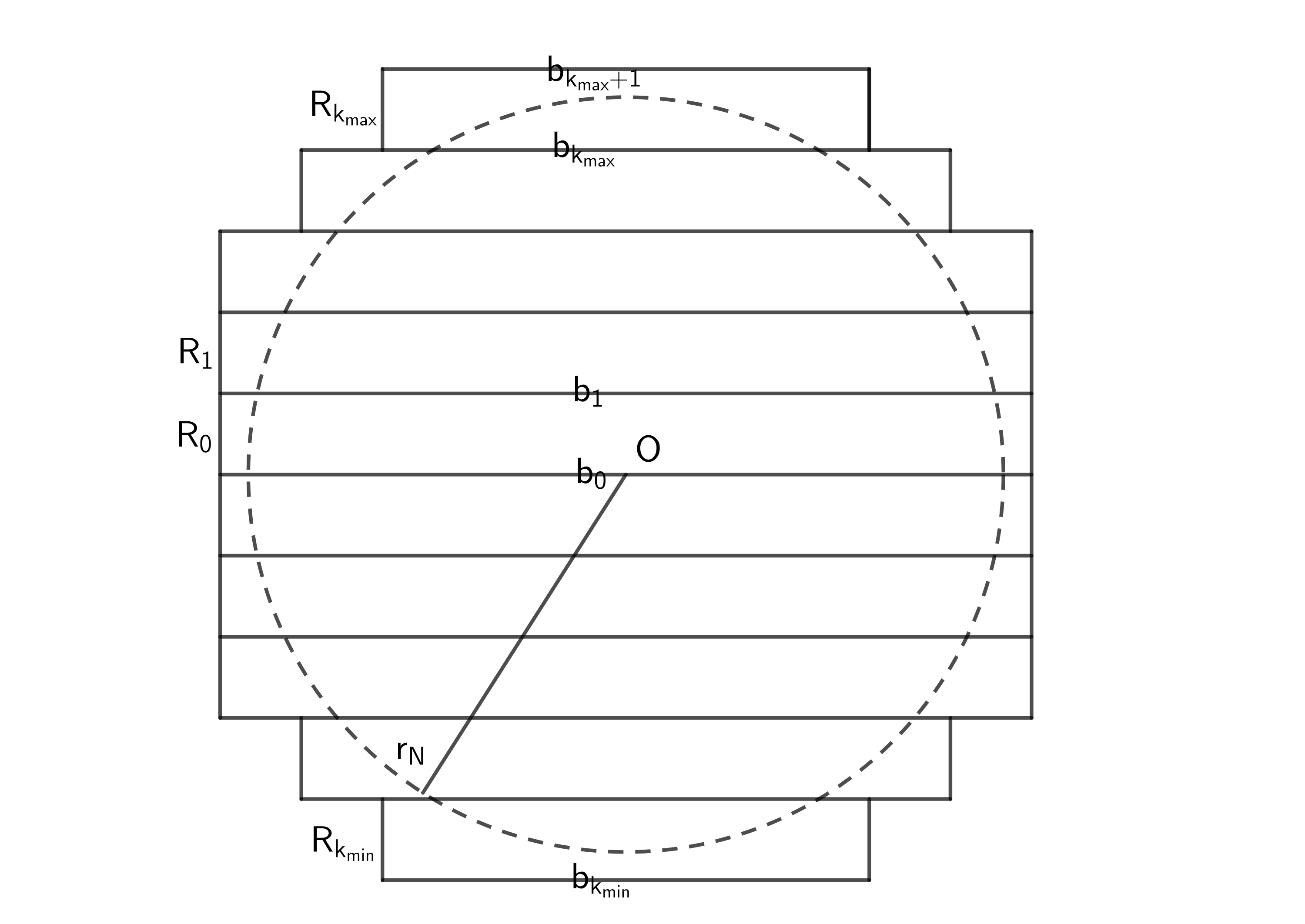}
\caption{The set of rectangles where the cut-off is applied, except for zero measure sets.}
\end{figure}
Finally, we define $E_N$ 
$$
E_N:=\bigcup_{Q^j_k\in{\cal Q}}Q^j_k
$$
and $\rho_N$ is the Gaussian measure restricted to $E_N$
$$
\rho_N:=\frac{\rho\mathbbm{1}_{E_N}}{\rho(E_N)}
$$
Hereafter, if $\tilde X_1,\dots,\tilde X_N$ and $\tilde Y_1,\dots,\tilde Y_N$ are independent and identically distributed with measure $\rho_N,$ and we define
$$
N^j_k:=\sum_{i=1}^N\mathbbm{1}(\tilde X_i\in Q^j_k)\quad;\quad M^j_k:=\sum_{i=1}^N\mathbbm{1}(\tilde Y_i \in Q^j_k)
$$
as the number of points $\tilde X_i$ and $\tilde Y_i$ in the square $Q^j_k,$ respectively

Finally, where not specified, we denote $\sum_{j,k}:=\sum_{k=k_{\min}}^{k_{\max}}\sum_{j=j_k^{\min}}^{j_k^{\max}}$.

In Subsection \ref{subsec6} we prove some bounds that we will need for the proof of Theorem \ref{gauxthm} in Subsection \ref{subsec7}.

\subsection{Preliminary estimates}\label{subsec6}
This first result proves that we can substitute $N$ independent random variables with common distribution $\rho$ with $N$ independent random variables with common distribution $\rho_N$.
\begin{lemma}\label{cutoff}
Let $\rho$ and $\rho_N$ defined as before, $X_1,\dots,X_N$ independent random variables in $\mathbb{R}^2$ with common distribution $\rho$ and $T:\mathbb{R}^2\to\mathbb{R}^2$ the optimal map that transports $\rho$ in $\rho_N$. Then we have
\begin{eqnarray}
&&\label{cutoff1}W_2^2\left(\rho,\rho_N\right)\leq c\frac{(\log N)^{\alpha}}{N}
\\
&&\label{cutoff2}\mathbb{E}\left[W_2^2\left(\frac{1}{N}\sum_{i=1}^N\delta_{X_i},\frac{1}{N}\sum_{i=1}^N\delta_{T(X_i)}\right)\right]\leq c\frac{(\log N)^{\alpha}}{N}
\end{eqnarray}
\end{lemma}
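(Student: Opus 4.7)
The plan is to treat the two bounds in order: \eqref{cutoff1} is a direct application of Talagrand's transport inequality, and \eqref{cutoff2} follows from \eqref{cutoff1} by using the Brenier map $T$ to construct an explicit (non-optimal) coupling between the two empirical measures.

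For \eqref{cutoff1} I would apply Theorem \ref{talfor} with $\mu = \rho_N$. Because $\rho_N = \rho\mathbbm{1}_{E_N}/\rho(E_N)$ is just a rescaling of $\rho$ on its support, the relative entropy collapses to a single scalar:
\begin{equation*}
\int_{\mathbb{R}^2} \rho_N(x)\log\frac{\rho_N(x)}{\rho(x)}\,dx \;=\; \left(\int_{E_N}\frac{\rho(x)}{\rho(E_N)}\,dx\right)\log\frac{1}{\rho(E_N)} \;=\; -\log\rho(E_N),
\end{equation*}
so the problem reduces to controlling $-\log\rho(E_N)$ from above. By construction $E_N \supset \{|x|\leq r_N\}$, and since $|X|^2$ is chi-squared with two degrees of freedom when $X\sim\rho$, one has the clean tail identity $1-\rho(E_N) \leq e^{-r_N^2/2} = (\log N)^\alpha/N$, directly from the definition of $r_N$. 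For large $N$ this quantity is much smaller than $1/2$, so the elementary inequality $-\log(1-t)\leq 2t$ delivers the claimed bound $c(\log N)^\alpha/N$.

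For \eqref{cutoff2} I would use the \emph{synchronous} coupling induced by $T$, namely $\pi_N := \frac{1}{N}\sum_{i=1}^N \delta_{(X_i,T(X_i))}$. Since $T_\#\rho = \rho_N$, the second marginal of $\pi_N$ is automatically $\frac{1}{N}\sum_i \delta_{T(X_i)}$, and $\pi_N$ is therefore a valid transport plan. Hence
\begin{equation*}
W_2^2\!\left(\frac{1}{N}\sum_{i=1}^N\delta_{X_i},\frac{1}{N}\sum_{i=1}^N\delta_{T(X_i)}\right) \;\leq\; \frac{1}{N}\sum_{i=1}^N|X_i-T(X_i)|^2.
\end{equation*}
Taking expectation and using that the $X_i$ are i.i.d.\ with law $\rho$, each summand equals $\int|x-T(x)|^2\rho(x)\,dx = W_2^2(\rho,\rho_N)$ by the optimality of $T$, so \eqref{cutoff1} closes the argument.

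There is no real obstacle here: the existence and measurability of the Brenier map $T$ is immediate from the absolute continuity of $\rho$, the Gaussian tail is computed in closed form, and the remainder is a mechanical application of Theorem \ref{talfor}. The only point worth being careful about is lining up the logarithmic factors so that the single computation $e^{-r_N^2/2} = (\log N)^\alpha/N$ drives both estimates.
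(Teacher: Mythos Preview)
Your proof is correct and follows essentially the same route as the paper: Talagrand's inequality reduces \eqref{cutoff1} to $-\log\rho(E_N)$, which is controlled via $E_N\supset\{|x|\le r_N\}$ and the exact Gaussian tail $e^{-r_N^2/2}=(\log N)^\alpha/N$, and \eqref{cutoff2} follows from \eqref{cutoff1} via the synchronous coupling $(X_i,T(X_i))$. The paper's argument is the same, only presented more tersely and with a harmless extra factor of $2$ in front of the entropy.
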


\begin{proof} For \eqref{cutoff1} we use again Theorem \ref{talfor} to write
\begin{eqnarray*}
W_2^2\left(\rho,\rho_N\right)&\leq&2\int_{E_N}dx\rho(x)\frac{\rho_N(x)}{\rho(x)}\log\left(\frac{\rho_N(x)}{\rho(x)}\right)
\\
&=&2\log\left(\frac{1}{\rho(E_N)}\right)\leq2\log\left(\frac{1}{\rho(\{|x|\leq r_N\})}\right)\leq c\frac{(\log N)^{\alpha}}{N}
\end{eqnarray*}
while as for \eqref{cutoff2}, if $T$ is the optimal map that transports $\rho$ in $\rho_N$, that is
\begin{eqnarray*}
\rho_N(E)&=&\rho(T^{-1}(E))
\\
W_2^2\left(\rho,\rho_N\right)&=&\int_{\mathbb{R}^2}dx\rho(x)|x-T(x)|^2
\end{eqnarray*}
we have
\begin{eqnarray*}
&&\mathbb{E}\left[W_2^2\left(\frac{1}{N}\sum_{i=1}^N\delta_{X_i},\frac{1}{N}\sum_{i=1}^N\delta_{T(X_i)}\right)\right]\leq\mathbb{E}\left[\frac{1}{N}\sum_{i=1}^N|X_i-T(X_i)|^2\right]
\\
&=&\int_{\mathbb{R}^2}dx\rho(x)|x-T(x)|^2=W_2^2(\rho,\rho_N)
\end{eqnarray*}
that is the thesis thanks to \eqref{cutoff1}.

\end{proof}

The following Proposition allows us to compute the total cost of the problem as the sum of the costs of the problems on the squares $Q^j_k$. We have to bound the expectation of the distance between the Gaussian measure and the same Gaussian measure modified on the squares $Q^j_k$ by a factor $\frac{N^j_k}{N\rho(Q^j_k)}$. Therefore the two measures we are considering are $\rho$ and $\sum_{j,k}\frac{N^j_k}{N}\frac{\rho\mathbbm{1}_{Q^j_k}}{\rho(Q^j_k)}$.

The reason why these measures should be similar is that $N^j_k$ is very close to its expectation, that is $\mathbb{E}(N^j_k)=N\rho(Q^j_k)$. To prove it, we proceed in two steps and use the triangle inequality between the two measure involved and a third measure, that is $
\sum_k\frac{N_k}{N}\frac{\rho\mathbbm{1}_{R_k}}{\rho(R_k)}$, where $N_k$ is the number of points $\tilde X_i$ in the rectangle $R_k$.

As for the distance between $\sum_{j,k}\frac{N^j_k}{N}\frac{\rho\mathbbm{1}_{Q^j_k}}{\rho(Q^j_k)}$ and $\sum_k\frac{N_k}{N}\frac{\rho\mathbbm{1}_{R_k}}{\rho(R_k)}$, first we use convexity of Wasserstein distance to restrict the problem to the rectangles, indeed we have
$$
W_2^2\left(\sum_{j,k}\frac{N^j_k}{N}\frac{\rho\mathbbm{1}_{Q^j_k}}{\rho(Q^j_k)},\sum_k\frac{N_k}{N}\frac{\rho\mathbbm{1}_{R_k}}{\rho(R_k)}\right)\leq\sum_{k:N_k>0}\frac{N_k}{N}W_2^2\left(\frac{N^j_k}{N_k}\frac{\rho\mathbbm{1}_{Q^j_k}}{\rho(Q^j_k)},\frac{\rho\mathbbm{1}_{R_k}}{\rho(R_k)}\right)
$$
Then, we argue as in Lemmas \ref{okannuli}: when using Benamou-Brenier formula there is a vanishing density in the denominator, and this causes a divergent term. But this divergent term is completely balanced from the fluctuations of the particles, which are very few.

Then, we use again Talagrand formula to bound the distance between $\sum_k\frac{N_k}{N}\frac{\rho\mathbbm{1}_{R_k}}{\rho(R_k)}$ and $\rho$.
\begin{proposition}\label{oksquares}
There exist a constant $c>0$ such that if $\tilde X_1,\dots,\tilde X_N$ and $\tilde Y_1,\dots,\tilde Y_N$ are independent random variables with common distribution $\rho_N$ and if
\begin{eqnarray*}
A_{\theta}:=\bigcap_{j,k}\left\{\begin{array}{lcr}|N^j_k-N\rho_N(Q^j_k)|&\leq&\theta N\rho_N(Q^j_k)\\|M^j_k-N\rho_N(Q^j_k)|&\leq&\theta N\rho_N(Q^j_k)\end{array}\right\}
\end{eqnarray*}
then
\begin{eqnarray}
\label{oksquares3}&&\mathbb{E}\left[W_2^2\left(\sum_{j,k}\frac{N^j_k}{N}\frac{\rho\mathbbm{1}_{Q^j_k}}{\rho(Q^j_k)},\rho\right)\right]\leq c\frac{(\log N)^{\frac{3}{2}}}{\epsilon N}+c\frac{(\log N)^{\alpha}}{N}
\\
\nonumber&&\mathbb{E}\left[W_2^2\left(\sum_{j,k}\left(\frac{M^j_k-N^j_k}{3\theta N}+\frac{M^j_k}{N}\right)\frac{\rho\mathbbm{1}_{Q^j_k}}{\rho(Q^j_k)},\rho\right)\mathbbm{1}_{A_{\theta}}\right]
\\
\label{oksquares4}&&\leq c\frac{(\log N)^{\frac{3}{2}}}{\epsilon\theta^2N}+c\frac{(\log N)^{\alpha}}{N}
\\
\nonumber&&\mathbb{E}\left[W_2^2\left(\sum_{j,k}\frac{3\theta M^j_k}{N}\frac{\rho\mathbbm{1}_{Q^j_k}}{\rho(Q^j_k)},\sum_{j,k}\frac{M^j_k-N^j_k+3\theta M^j_k}{N}\frac{\rho\mathbbm{1}_{Q^j_k}}{\rho(Q^j_k)}\right)\mathbbm{1}_{A_{\theta}}\right]
\\
\label{oksquares5}&&\leq c\frac{(\log N)^{\frac{3}{2}}}{\epsilon\theta N}+c\theta\frac{(\log N)^{\alpha}}{N}
\end{eqnarray}
\end{proposition}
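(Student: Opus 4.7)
The plan is to prove the three estimates in parallel via a triangle-inequality decomposition that introduces the intermediate \emph{rectangle measure}
\[
\nu := \sum_k \frac{N_k}{N}\frac{\rho\,\mathbbm{1}_{R_k}}{\rho(R_k)}, \qquad N_k := \sum_j N^j_k.
\]
For each of the three left-hand side measures $\mu_{\mathcal Q}$, I would use
\[
W_2^2(\mu_{\mathcal Q},\rho) \le (1+\beta)\,W_2^2(\mu_{\mathcal Q},\nu) + \tfrac{1+\beta}{\beta}\,W_2^2(\nu,\rho),
\]
for an auxiliary $\beta>0$ to be let tend to zero, so that the task splits into a \emph{square-to-rectangle} bound and a \emph{rectangle-to-Gaussian} bound. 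Lemma \ref{cutoff} supplies the $O((\log N)^{\alpha}/N)$ discrepancy between $\rho$ and $\rho_N$ and accounts for the second term in every right-hand side.

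For the square-to-rectangle bound, I would exploit the product structure $\rho(x) = \mu(x_1)\mu(x_2)$: inside each rectangle $R_k$, both $\mu_{\mathcal Q}$ and $\nu$ have the same $x_2$-conditional structure (the $x_2$-marginal of each, restricted to $R_k$, equals $\tfrac{N_k}{N}\mu(x_2)/\mu((b_k,b_{k+1}))$), so transport can be performed with $x_2$ fixed and convexity of $W_2^2$ reduces the problem to a sum of one-dimensional matchings on $J^k$. On each $J^k$ I would apply Benamou-Brenier (Theorem \ref{benbrefor}) with a potential depending only on $x_1$; the denominator $\mu(x_1)$ can be as small as $(\log N)^{\alpha}/N$ near $\partial J^k$, but the numerator is built from squared fluctuations $(N^j_k - N_k\rho(Q^j_k)/\rho(R_k))^2$ whose expectation is bounded by $N\rho(Q^j_k)$, exactly as in Lemma \ref{okannuli}. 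Summing over the $\sim r_N^2/\epsilon$ rectangles with $r_N^2 \sim \log N$ yields the factor $(\log N)^{3/2}/(\epsilon N)$, the extra $\sqrt{\log N}$ coming from the 1D Poincar\'e-type integration on $J^k$.

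For the rectangle-to-Gaussian bound, $\nu$ is absolutely continuous with respect to $\rho$ with piecewise-constant density $\tfrac{N_k}{N\rho(R_k)}$ on $R_k$, so Talagrand's inequality (Theorem \ref{talfor}) gives directly
\[
W_2^2(\rho,\nu) \le \sum_k \frac{N_k}{N}\log\frac{N_k}{N\rho(R_k)}.
\]
A second-order Taylor expansion of $t\log t$ around $t=N\rho(R_k)$, together with the variance bound $\operatorname{Var}(N_k) \le N\rho(R_k)$, makes each term contribute $O(1/N)$ in expectation; the slight bias $\mathbb{E}[N_k] = N\rho_N(R_k) \ne N\rho(R_k)$ produces only an additional $O((\log N)^\alpha/N)$ term, handled as in Lemma \ref{cutoff}. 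Summed over the $O(\log N/\epsilon)$ rectangles, this is $O(\log N/(\epsilon N))$, absorbed in the leading $(\log N)^{3/2}/(\epsilon N)$. The variants \eqref{oksquares4} and \eqref{oksquares5} follow by the same architecture: in \eqref{oksquares4} the extra perturbation $\tfrac{M^j_k - N^j_k}{3\theta N}$ has variance inflated by $\theta^{-2}$, producing the $\theta^{-2}$ factor; in \eqref{oksquares5} the two measures share total mass $3\theta$, so renormalizing to probability measures and applying the triangle inequality through $\rho$ with the previous two bounds gives, after multiplying back by $3\theta$, the $\theta^{-1}$ first term and the $\theta$-multiplied $(\log N)^{\alpha}/N$ second term.

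The main obstacle, as in Lemma \ref{okannuli}, is proving the exact balance between the divergent $1/\mu$ factor in the Benamou-Brenier integrand on cells where $\mu(x_1) \sim (\log N)^{\alpha}/N$ and the Poisson variance of $N^j_k$. The product structure of the Gaussian is essential: it lets one choose the Benamou-Brenier potential depending only on $x_1$, so that only the one-dimensional marginal $\mu$ (not the two-dimensional $\rho$, which is much smaller near the corners of $E_N$) appears in the denominator, and the resulting divergence stays logarithmic.
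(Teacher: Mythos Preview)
Your approach is essentially identical to the paper's: the same triangle-inequality split through the rectangle measure $\nu=\sum_k\frac{N_k}{N}\frac{\rho\mathbbm{1}_{R_k}}{\rho(R_k)}$, the same reduction of the square-to-rectangle term to a one-dimensional Benamou--Brenier estimate via the product structure (Lemma~\ref{productproperty}), the same Talagrand bound for the rectangle-to-Gaussian term, and the same derivation of \eqref{oksquares4} and \eqref{oksquares5} from \eqref{oksquares3}. One slip: you should not let $\beta\to0$ in your triangle inequality, since this would blow up the coefficient $(1+\beta)/\beta$ in front of $W_2^2(\nu,\rho)$; the paper simply takes $\beta=1$, and since both pieces are bounded by quantities of the stated order this is all that is needed.
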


\begin{proof}
We only prove \eqref{oksquares3} and \eqref{oksquares5}, indeed \eqref{oksquares4} is analogue to \eqref{oksquares3}.

We start by proving \eqref{oksquares3}, therefore we define
$$
N_k:=\sum_{j=j_k^{\min}}^{j_k^{\max}}N^j_k
$$
so that $N_k$ is the numbers of particles $X_i$ in the whole rectangle $R_k$ and
$$
P^j_k:=\sum_{i=j_k^{\min}}^{j-1}N^i_k
$$
Therefore $P^j_k$ is the numbers of particles $X_i$ in $R_k$ not in the whole rectangle, but rather only until $a_j$.

As for \eqref{oksquares4}, the only difference in the proof is that where we have summands involving $N^j_k$ we will find the same terms involving $M^j_k+\frac{N^j_k-M^j_k}{\theta}$ (if $M^j_k$ is the number of points $Y_i$ in the square $Q^j_k$).

We proceed in two steps. First, we focus on the distance between the density modified on all the squares $Q^j_k$ and the one modified only on the rectangles $R_k$; then, we study the distance between the measure modified on the rectangles $R_k$ and the Gaussian measure itself.

Using first the triangle inequality and then the convexity of quadratic Wasserstein distance we get
\begin{eqnarray}
\label{oksquares1}W_2^2\left(\sum_{j,k}\frac{N^j_k}{N}\frac{\rho\mathbbm{1}_{Q^j_k}}{\rho(Q^j_k)},\rho\right)&\leq&2\sum_{k:N_k>0}\frac{N_k}{N}W_2^2\left(\sum_{j=j_k^{\min}}^{j_k^{\max}}\frac{N^j_k}{N_k}\frac{\rho\mathbbm{1}_{Q^j_k}}{\rho(Q^j_k)},\frac{\rho\mathbbm{1}_{R_k}}{\rho(R_k)}\right)
\\
\label{oksquares2}&+&2W_2^2\left(\sum_k\frac{N_k}{N}\frac{\rho\mathbbm{1}_{R_k}}{\rho(R_k)},\rho\right)
\end{eqnarray}
As for the term in \eqref{oksquares1}, we observe that we are considering again product measures in the rectangle $R_k$ whose marginals coincide in the direction $x_2$, indeed we have
\begin{eqnarray*}
\sum_{j=j_k^{\min}}^{j_k^{\max}}\frac{N^j_k}{N_k}\frac{\rho(x)\mathbbm{1}_{Q^j_k}(x)}{\rho(Q^j_k)}&=&\sum_{j=j_k^{\min}}^{j_k^{\max}}\frac{N^j_k}{N_k}\frac{\mu(x_1)\mathbbm{1}_{(a_j,a_{j+1})}(x_1)}{\mu(a_j,a_{j+1})}\frac{\mu(x_2)\mathbbm{1}_{(b_k,b_{k+1})}(x_2)}{\mu(b_k,b_{k+1})}
\\
\frac{\rho(x)\mathbbm{1}_{R_k}(x)}{\rho(R_k)}&=&\frac{\mu(x_1)\mathbbm{1}_{J^k}(x_1)}{\mu(J^k)}\frac{\mu(x_2)\mathbbm{1}_{(b_k,b_{k+1})}(x_2)}{\mu(b_k,b_{k+1})}
\end{eqnarray*}
therefore we just have a one dimensional problem: thanks to Lemma \ref{productproperty} we get
\begin{eqnarray}
\label{oksquares6}W_2^2\left(\sum_{j=j_k^{\min}}^{j_k^{\max}}\frac{N^j_k}{N_k}\frac{\rho\mathbbm{1}_{Q^j_k}}{\rho(Q^j_k)},\frac{\rho\mathbbm{1}_{R_k}}{\rho(R_k)}\right)\leq W_2^2\left(\sum_{j=j_k^{\min}}^{j_k^{\max}}\frac{N^j_k}{N_k}\frac{\mu\mathbbm{1}_{(a_j,a_{j+1})}}{\mu(a_j,a_{j+1})},\frac{\mu\mathbbm{1}_{J^k}}{\mu(J^k)}\right)
\end{eqnarray}
To bound this term we argue as in Lemma \ref{okannuli}: we define $f:J^k\to\mathbb{R}$ such that if $x_1\in (a_j,a_{j+1})$
$$
f'(x_1)=\frac{P^j_k-\mathbb{E}_{N_k}(P^j_k)}{N_k}+\frac{N^j_k-\mathbb{E}_{N_k}(N^j_k)}{N_k}\frac{\mu(a_j,x_1)}{\mu(a_j,a_{j+1})}
$$
so that, thanks to $\mathbb{E}_{N_k}(N^j_k)=N_k\frac{\mu(a_j,a_{j+1})}{\mu(J^k)}$, $f$ is the weak solution of
$$
f''(x_1)=\sum_{j=j_k^{\min}}^{j_k^{\max}}\left(\frac{N^j_k}{N_k}\frac{\mu(x_1)}{\mu(a_j,a_{j+1})}-\frac{\mu(x_1)}{\mu(J^k)}\right)\mathbbm{1}_{(a_j,a_{j+1})}(x_1)
$$
that is the difference between the densities we are considering.

Then, thanks to Theorem \ref{benbrefor} we have
\begin{eqnarray}
\nonumber&&W_2^2\left(\sum_{j=j_k^{\min}}^{j_k^{\max}}\frac{N^j_k}{N_k}\frac{\mu\mathbbm{1}_{(a_j,a_{j+1})}}{\mu(a_j,a_{j+1})},\frac{\mu\mathbbm{1}_{J^k}}{\mu(J^k)}\right)\leq4\int_{J^k}dx_1\frac{(f'(x_1))^2}{\mu(x_1)}\mu(J^k)
\\
\nonumber&\leq& c\sum_{j=j_k^{\min}}^{j_k^{\max}}\left[\left(\frac{P^j_k-\mathbb{E}_{N_k}(P^j_k)}{N_k}\right)^2+\left(\frac{N^j_k-\mathbb{E}_{N_k}(N^j_k)}{N_k}\right)^2\right]\mu(J^k)\int_{a_j}^{a_{j+1}}dx_1\frac{e^{\frac{x_1^2}{2}}}{\sqrt{2\pi}}
\\
\label{oksquares7}&\leq& c\sum_{j=j_k^{\min}}^{j_k^{\max}}\left[\left(\frac{P^j_k-\mathbb{E}_{N_k}(P^j_k)}{N_k}\right)^2\right]\mu(J^k)(a_{j+1}-a_j)e^{\frac{a_j^2}{2}}
\\
\label{oksquares14}&+&c\sum_{j=j_k^{\min}}^{j_k^{\max}}\left[\left(\frac{N^j_k-\mathbb{E}_{N_k}(N^j_k)}{N_k}\right)^2\right]\mu(J^k)(a_{j+1}-a_j)e^{\frac{a_j^2}{2}}
\end{eqnarray}
where in the last inequality we have used that, thanks to the choice of the points $a_j$, we have $|a_{j+1}^2-a_j^2|\leq c$.

We are going to estimate these two terms using the fact that where the density is small there are also small fluctuations of the particles, so that there will be a balance between the fluctuations and the divergent terms. In particular, our aim is to show that all the terms in the last two sums perfectly balance, and only $(a_{j+1}-a_j)$ remains.

Thus, to bound \eqref{oksquares7} and \eqref{oksquares14} we observe that we can condition to the number of particles $X_i$ in $R_k$ (that is $N_k$) to obtain
\begin{eqnarray}
\label{oksquares9}\mathbbm{E}_{N_k}\left[\left(\frac{P^j_k-\mathbb{E}_{N_k}(P^j_k)}{N_k}\right)^2\right]&=&\frac{1}{N_k}\frac{\mu(a_{j_k^{\min}},a_j)}{\mu(J^k)}\frac{\mu(a_j,a_{j_k^{\max}+1})}{\mu(J^k)}
\\
\nonumber\mathbbm{E}_{N_k}\left[\left(\frac{N^j_k-\mathbb{E}_{N_k}(N^j_k)}{N_k}\right)^2\right]&=&\frac{1}{N_k}\frac{\mu(a_j,a_{j+1})}{\mu(J^k)}\left(1-\frac{\mu(a_j,a_{j+1})}{\mu(J^k)}\right)
\\
\label{oksquares12}&\leq&\frac{1}{N_k}\frac{\mu(a_j,a_{j+1})}{\mu(J^k)}
\end{eqnarray}
Before going on, here we observe that, when proving \eqref{oksquares4}, to bound \eqref{oksquares7} and \eqref{oksquares14} at this point we should condition both to $N_k$ and to $M_k$ (and not only to $N_k$). In this way instead of $\frac{1}{N_k}$ in \eqref{oksquares9} and \eqref{oksquares12}, up for a constant we would obtain
$$
\frac{1}{M_k+\frac{M_k-N_k}{3\theta}}+\frac{M_k+N_k}{\theta^2\left(M_k+\frac{M_k-N_k}{3\theta}\right)^2}
$$
and these term can be estimated (in $A_{\theta}$, and but for multiplicative constants) by
$$
\frac{1}{\theta^2\left(M_k+\frac{M_k-N_k}{3\theta}\right)}
$$
Finally, combining \eqref{oksquares6} with \eqref{oksquares7} and \eqref{oksquares14} we get
\begin{eqnarray}
\nonumber&&\mathbb{E}_{N_k}\left[W_2^2\left(\sum_{j=j_k^{\min}}^{j_k^{\max}}\frac{N^j_k}{N_k}\frac{\mu\mathbbm{1}_{(a_j,a_{j+1})}}{\mu(a_j,a_{j+1})},\frac{\mu\mathbbm{1}_{J^k}}{\mu(J^k)}\right)\right]
\\
\nonumber&\leq&\frac{c}{N_k}\sum_{j=j_k^{\min}}^{j_k^{\max}}\left[\mu(a_j,a_{j+1})+\frac{\mu(a_{j_k^{\min}},a_j)\mu(a_j,a_{j_k^{\max}+1})}{\mu(J^k)}\right]e^{\frac{a_j^2}{2}}(a_{j+1}-a_j)
\\
\label{oksquares8}
\end{eqnarray}
\begin{figure}[h!]
\centering
\includegraphics[width=7cm]{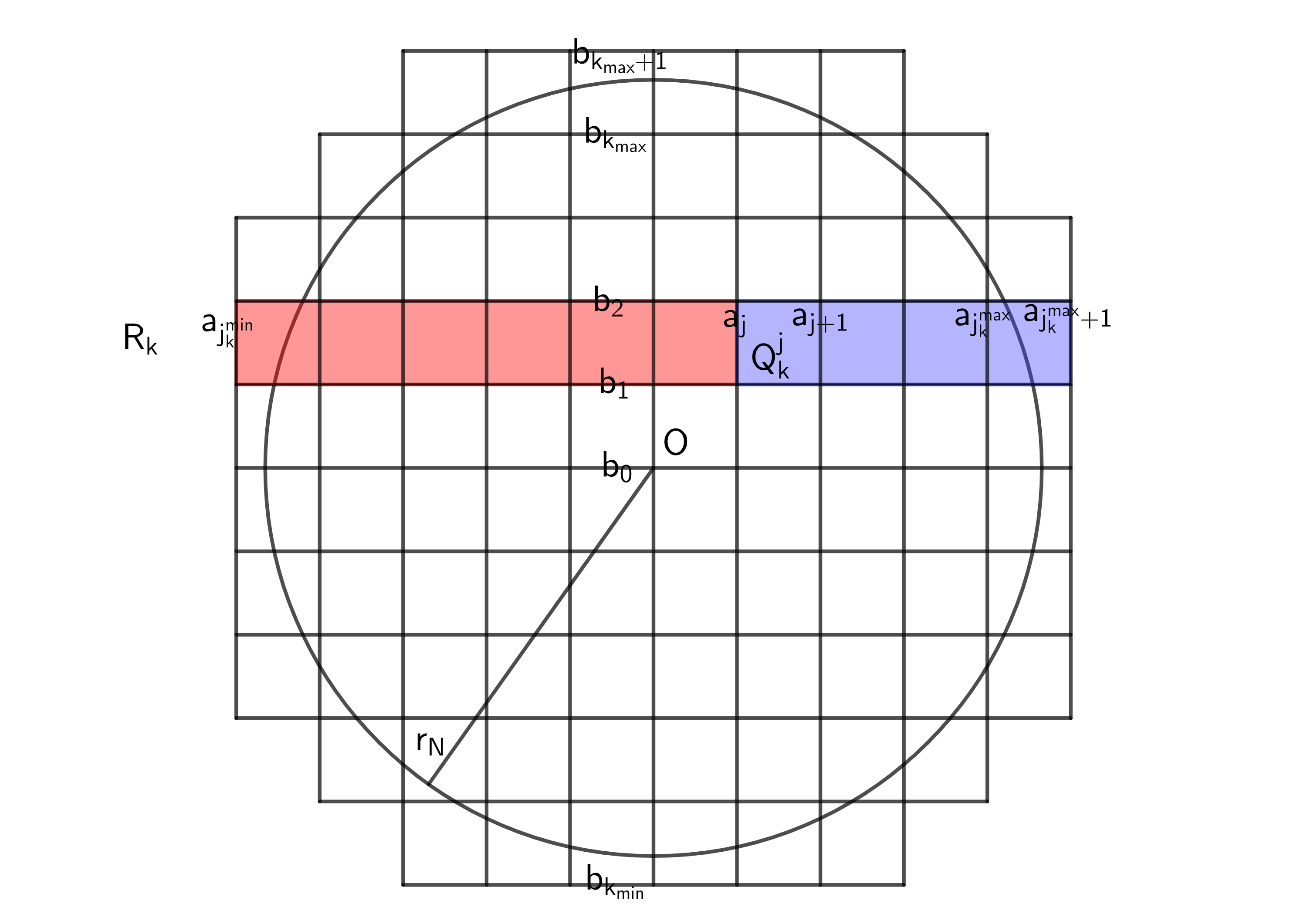}
\caption{A graphical representation of the proof. $N^j_k$ is the number of particles in $Q^j_k$ (the first square in blue), while $P^j_k$ is the number of particles in the red rectangle. Once fixed $N_k$, that is the number of particles in $R_k$, the fluctuations of the particles in the red rectangle are exactly the fluctuations of the particles in the blue one.}
\end{figure}
Now for \eqref{oksquares8} we claim that there exist  a constant $c>0$ such that $\forall j$
\begin{eqnarray*}
&&\mu(a_j,a_{j+1})e^{\frac{a_j^2}{2}}\leq c
\\
&&\frac{\mu(a_{j_k^{\min}},a_j)\mu(a_j,a_{j_k^{\max}+1})}{\mu(J^k)}e^{\frac{a_j^2}{2}}\leq c
\end{eqnarray*}
The first inequality follows from
\begin{eqnarray*}
\mu(a_j,a_{j+1})e^{\frac{a_j^2}{2}}=\int_{a_j}^{a_{j+1}}dx_1\frac{e^{-\frac{x_1^2}{2}}}{\sqrt{2\pi}}e^{\frac{a_j^2}{2}}\leq c(a_{j+1}-a_j)e^{-\frac{a_j^2}{2}}e^{\frac{a_j^2}{2}}=c(a_{j+1}-a_j)
\end{eqnarray*}
while the second one is a consequence of
\begin{eqnarray*}
\textrm { if }j\geq0&&\quad\frac{\mu(a_{j_k^{\min}},a_j)\mu(a_j,a_{j_k^{\max}+1})}{\mu(J^k)}e^{\frac{a_j^2}{2}}\leq\mu(a_j,a_{j_k^{\max}+1})e^{\frac{a_j^2}{2}}
\\
&\leq&\int_{a_j}^{+\infty}dx_1e^{-\frac{x_1^2}{2}}e^{\frac{a_j^2}{2}}\leq\max_{x\geq 0}e^{\frac{x^2}{2}}\int_x^{+\infty}dye^{-\frac{y^2}{2}}
\\
\textrm{ if }j<0&&\quad\frac{\mu(a_{j_k^{\min}},a_j)\mu(a_j,a_{j_k^{\max}+1})}{\mu(J^k)}e^{\frac{a_j^2}{2}}\leq\mu(a_{j_k^{\min}},a_j)e^{\frac{a_j^2}{2}}
\\
&\leq&\int_{-\infty}^{a_j}dx_1e^{-\frac{x_1^2}{2}}e^{\frac{a_j^2}{2}}\leq\max_{x\leq 0}e^{\frac{x^2}{2}}\int_{-\infty}^xdye^{-\frac{y^2}{2}}
\end{eqnarray*}
Thus the claim is proved and we have
\begin{eqnarray}
\nonumber&&\sum_{j=j_k^{\min}}^{j_k^{\max}}\left[\mu(a_j,a_{j+1})+\frac{\mu(a_{j_k^{\min}},a_j)\mu(a_j,a_{j_k^{\max}+1})}{\mu(J^k)}\right]e^{\frac{a_j^2}{2}}(a_{j+1}-a_j)
\\
\label{oksquares10}&\leq& c\sum_{j=j_k^{\min}}^{j_k^{\max}}(a_{j+1}-a_j)=c|J^k|\leq c\sqrt{\log N}
\end{eqnarray}
Applying \eqref{oksquares10} to \eqref{oksquares8} and using the properties of the conditioned expected value we get
\begin{eqnarray}
\nonumber&&\mathbb{E}\left[\sum_{k:N_k>0}\frac{N_k}{N}W_2^2\left(\sum_{j=j_k^{\min}}^{j_k^{\max}}\frac{N^j_k}{N_k}\frac{\rho\mathbbm{1}_{Q^j_k}}{\rho(Q^j_k)},\frac{\rho\mathbbm{1}_{R_k}}{\rho(R_k)}\right)\right]
\\
\nonumber&\leq&\mathbb{E}\left[\sum_{k:N_k>0}\frac{N_k}{N}\mathbb{E}_{N_k}\left[W_2^2\left(\sum_{j=j_k^{\min}}^{j_k^{\max}}\frac{N^j_k}{N_k}\frac{\mu\mathbbm{1}_{(a_j,a_{j+1})}}{\mu(a_j,a_{j+1})},\frac{\mu\mathbbm{1}_{J^k}}{\mu(J^k)}\right)\right]\right]
\\
\label{oksquares13}&\leq&\mathbb{E}\left[\sum_{k:N_k>0}\frac{N_k}{N}\frac{c\sqrt{\log N}}{N_k}\right]=c\frac{|{\cal R}|\sqrt{\log N}}{N}\leq c\frac{(\log N)^{\frac{3}{2}}}{\epsilon N}
\end{eqnarray}
Now we have bounded the expectation of \eqref{oksquares1}.

To estimate \eqref{oksquares2} we argue in the following way: thanks to Theorem \ref{talfor} and using $\log x\leq x-1$ and $\mathbb{E}(N_k)=N\rho_N(R_k)=N\frac{\rho(R_k)}{\rho(E_N)},$ we have
\begin{eqnarray*}
&&W_2^2\left(\sum_k\frac{N_k}{N}\frac{\rho\mathbbm{1}_{R_k}}{\rho(R_k)},\rho\right)\leq2\sum_k\frac{N_k}{N\rho(R_k)}\int_{R_k}dx\rho(x)\log\left(\frac{N_k}{N\rho(R_k)}\right)
\\
&\leq&2\sum_k\frac{\mathbb{E}(N_k)}{N}\frac{N_k}{\mathbb{E}(N_k)}\left(\frac{N_k}{\mathbb{E}(N_k)}-1\right)+2\log\left(\frac{1}{\rho(E_N)}\right)
\end{eqnarray*}
Therefore
\begin{eqnarray}
\nonumber&&\mathbb{E}\left[W_2^2\left(\sum_k\frac{N_k}{N}\frac{\rho\mathbbm{1}_{R_k}}{\rho(R_k)},\rho\right)\right]
\\
\nonumber&\leq&2\sum_k\frac{N\rho_N(R_k)(1-\rho_N(R_k))}{N^2\rho_N(R_k)}+2\log\left(\frac{1}{\rho(E_N)}\right)
\\
\label{oksquares11}&\leq&c\frac{\log N}{\epsilon N}+c\frac{(\log N)^{\alpha}}{N}
\end{eqnarray}
Before concluding the proof we can observe that for \eqref{oksquares4} at this point we would obtain the following term 
\begin{eqnarray*}
\sum_k\frac{\mathbb{E}\left(\frac{M_k-N_k}{3\theta}+M_k\right)}{N}\frac{\frac{M_k-N_k}{3\theta}+M_k}{\mathbb{E}\left(\frac{M_k-N_k}{3\theta}+M_k\right)}\left(\frac{\frac{M_k-N_k}{3\theta}+M_k}{\mathbb{E}\left(\frac{M_k-N_k}{3\theta}+M_k\right)}-1\right)
\end{eqnarray*}
that is analogue to the previous one but with a restriction to the set $A_{\theta}$: we can bound the expectation computed in $A_{\theta}$ with the expectation computed everywhere because this term is everywhere positive (indeed the function $f(x)=x^2-x$ is convex and we are considering a convex combination of the summands).

Finally, combining \eqref{oksquares1} with \eqref{oksquares13} and \eqref{oksquares2} with \eqref{oksquares11} we obtain \eqref{oksquares3}.

To prove \eqref{oksquares5} we observe that in $A_{\theta}$
\begin{eqnarray*}
&&W_2^2\left(\sum_{j,k}\frac{3\theta M^j_k}{N}\frac{\rho\mathbbm{1}_{Q^j_k}}{\rho(Q^j_k)},\sum_{j,k}\frac{M^j_k-N^j_k+3\theta M^j_k}{N}\frac{\rho\mathbbm{1}_{Q^j_k}}{\rho(Q^j_k)}\right)
\\
&\leq&6\theta W_2^2\left(\sum_{j,k}\frac{M^j_k}{N}\frac{\rho\mathbbm{1}_{Q^j_k}}{\rho(Q^j_k)},\rho\right)+6\theta W_2^2\left(\sum_{j,k}\frac{M^j_k+\frac{M^j_k-N^j_k}{3\theta}}{N}\frac{\rho\mathbbm{1}_{Q^j_k}}{\rho(Q^j_k)},\rho\right) 
\end{eqnarray*}
and this implies the thesis thanks to \eqref{oksquares3} and \eqref{oksquares4}.
\end{proof}
With the following Lemma we prove that thanks to the choice of the squares $Q^j_k$ we can transport $\frac{\rho\mathbbm{1}_{Q^j_k}}{\rho(Q^j_k)}$ in the uniform measure. This implies that the problem on the square $Q^j_k$ is (approximately) a random matching problem in the square with the uniform measure, and thus solved in \cite{AST}. 
\begin{lemma}\label{touniform}
There exist a function $\epsilon(N)\xrightarrow[N\to\infty]{}2\epsilon$ such that if $\tilde X_1,\dots,\tilde X_{N^j_k}$ and $\tilde Y_1,\dots,\tilde Y_{M^j_k}$ are independent random variables in $Q^j_k$ with common distribution $\frac{\rho_N\mathbbm{1}_{Q^j_k}}{\rho_N(Q^j_k)}$ and $Z_1,\dots,Z_{N^j_k}$ and $W_1,\dots,W_{M^j_k}$ are independent random variables in $Q^j_k$ with common distribution $\frac{\mathbbm{1}_{Q^j_k}}{|Q^j_k|}$, then
\begin{eqnarray}
\nonumber&&\mathbb{E}\left[W_2^2\left(\frac{1}{N^j_k}\sum_{i=1}^{N^j_k}\delta_{\tilde X_i},\frac{\rho_N\mathbbm{1}_{Q^j_k}}{\rho_N(Q^j_k)}\right)\right]
\\
\label{touniform1}&\leq&e^{\epsilon(N)}\mathbb{E}\left[W_2^2\left(\frac{1}{N^j_k}\sum_{i=1}^{N^j_k}\delta_{Z_i},\frac{\mathbbm{1}_{Q^j_k}}{|Q^j_k|}\right)\right]
\\
\nonumber&&\mathbb{E}\left[Wb_2^2\left(\frac{1}{N^j_k}\sum_{i=1}^{N^j_k}\delta_{\tilde X_i},\frac{1}{M^j_k}\sum_{i=1}^{M^j_k}\delta_{\tilde Y_i}\right)\right]
\\
\label{touniform2}&\geq&e^{-\epsilon(N)}\mathbb{E}\left[Wb_2^2\left(\frac{1}{N^j_k}\sum_{i=1}^{N^j_k}\delta_{Z_i},\frac{1}{M^j_k}\sum_{i=1}^{M^j_k}\delta_{W_i}\right)\right]
\end{eqnarray}
\end{lemma}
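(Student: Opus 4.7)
The plan is to exploit the fact that both measures factor as products of one-dimensional densities on $Q^j_k$, and that the Gaussian weight $\rho$ varies very little on a square of side $\epsilon/r_N$ located within a ball of radius $\sim r_N$. Writing $\sigma_k:=\mathbbm{1}_{Q^j_k}/|Q^j_k|$ and $\tilde\sigma_k:=\rho_N\mathbbm{1}_{Q^j_k}/\rho_N(Q^j_k)$, let $T_1:(a_j,a_{j+1})\to(a_j,a_{j+1})$ and $T_2:(b_k,b_{k+1})\to(b_k,b_{k+1})$ be the monotone one-dimensional transport maps pushing the uniform density on the respective interval onto the truncated Gaussian density, and set $T(x_1,x_2):=(T_1(x_1),T_2(x_2))$. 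Then $T_\#\sigma_k=\tilde\sigma_k$, and $T$ sends $\bar Q^j_k$ to $\bar Q^j_k$ homeomorphically, mapping the interior to the interior and the boundary to the boundary.

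The main computation is a uniform Lipschitz estimate on $T$. From $T_1'(x_1)=\mu((a_j,a_{j+1}))/\bigl((a_{j+1}-a_j)\mu(T_1(x_1))\bigr)$, both the numerator (an average of $\mu$) and the denominator lie in $[\min_{(a_j,a_{j+1})}\mu,\max_{(a_j,a_{j+1})}\mu]$, so
\[
\bigl|\log T_1'(x_1)\bigr|\le \log\frac{\max\mu}{\min\mu}=\tfrac12\bigl|a_j^2-a_{j+1}^2\bigr|
=\tfrac12(a_{j+1}-a_j)\bigl|a_j+a_{j+1}\bigr|.
\]
Because $Q^j_k\in\mathcal Q$ forces $\max(|a_j|,|a_{j+1}|)\le r_N+\epsilon\sqrt2/r_N$ and $a_{j+1}-a_j=\epsilon/r_N$, this bound is $\epsilon+O(\epsilon^2/r_N^2)=:\Delta_N\to\epsilon$. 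The same estimate holds for $T_2$, and since the two maps act on orthogonal coordinates, $|T(x)-T(y)|^2\le e^{2\Delta_N}|x-y|^2$; hence both $T$ and $T^{-1}$ are $e^{\Delta_N}$-Lipschitz on $\bar Q^j_k$.

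For \eqref{touniform1}, the i.i.d.\ samples $T(Z_1),\dots,T(Z_{N^j_k})$ from $\tilde\sigma_k$ agree in distribution with $\tilde X_1,\dots,\tilde X_{N^j_k}$, and $\frac1{N^j_k}\sum_i\delta_{T(Z_i)}=T_\#\bigl(\frac1{N^j_k}\sum_i\delta_{Z_i}\bigr)$, $\tilde\sigma_k=T_\#\sigma_k$. Applying the standard Lipschitz pushforward property of $W_2^2$ yields
\[
W_2^2\Bigl(\tfrac1{N^j_k}\sum\delta_{T(Z_i)},\tilde\sigma_k\Bigr)\le e^{2\Delta_N}W_2^2\Bigl(\tfrac1{N^j_k}\sum\delta_{Z_i},\sigma_k\Bigr),
\]
and taking expectations gives \eqref{touniform1} with $\epsilon(N):=2\Delta_N\to 2\epsilon$. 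For \eqref{touniform2}, any admissible coupling $J$ on $\bar Q^j_k\times\bar Q^j_k$ in the definition of $Wb_2^2$ pushes forward under $T\times T$ to an admissible coupling for the transformed marginals, with cost multiplied by at most $e^{2\Delta_N}$; applying the same argument with $T^{-1}$ gives the reverse inequality $Wb_2^2(\mu,\nu)\le e^{2\Delta_N}Wb_2^2(T_\#\mu,T_\#\nu)$, which is \eqref{touniform2} after exchanging the roles of $(\tilde X_i,\tilde Y_i)$ and $(Z_i,W_i)$.

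The delicate point is the uniformity of $\Delta_N\to\epsilon$ over all squares in $\mathcal Q$, including the boundary squares that may stick slightly outside $\{|x|\le r_N\}$; the choice $r_N=\sqrt{2\log(N/(\log N)^\alpha)}$ with the extra $(\log N)^\alpha$ factor is precisely what makes the correction $\epsilon^2/r_N^2$ vanish. Everything else is routine: checking that the one-dimensional Lipschitz constants combine into the correct Euclidean constant without extra factors, and that the homeomorphism property of $T$ on $\bar Q^j_k$ lets the pushforward argument go through for the boundary version $Wb_2^2$.
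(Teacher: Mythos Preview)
Your proof is correct and follows essentially the same approach as the paper: both construct the product map $T=(T_1,T_2)$ of the one-dimensional monotone transports between the uniform and the Gaussian-restricted densities on the coordinate intervals, bound $|\log T_i'|$ by $\tfrac12|a_{j+1}^2-a_j^2|\le\epsilon+O(\epsilon^2/r_N^2)$, and deduce the bi-Lipschitz inequality $e^{-\epsilon(N)}|x-y|^2\le|T(x)-T(y)|^2\le e^{\epsilon(N)}|x-y|^2$ with $\epsilon(N)\to2\epsilon$. Your write-up is in fact slightly more careful than the paper's in spelling out why the bi-Lipschitz pushforward argument applies to $Wb_2^2$ (namely, $T$ is a homeomorphism of $\bar Q^j_k$ preserving the boundary, so admissible couplings push forward to admissible couplings), a point the paper leaves implicit.
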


\begin{proof} To prove the statement, arguing as in \cite{BC}, we can reduce to find a suitable map $S:Q^j_k\to Q^j_k$ which transports $\frac{\rho_N\mathbbm{1}_{Q^j_k}}{\rho_N(Q^j_k)}$ in $\frac{1}{|Q^j_k|}$.
Let $S_j:(a_j,a_{j+1})\to (a_j,a_{j+1})$ and $S^k:(b_k,b_{k+1})\to(b_k,b_{k+1})$ be defined by
\begin{eqnarray*}
\int_{a_j}^{S_j(x_1)}dy_1e^{-\frac{y_1^2}{2}}=\int_{a_j}^{a_{j+1}}dy_1e^{-\frac{y_1^2}{2}}\frac{x_1-a_j}{a_{j+1}-a_j}
\\
\int_{b_k}^{S^k(x_2)}dy_2e^{-\frac{y_2^2}{2}}=\int_{b_k}^{b_{k+1}}dy_2e^{-\frac{y_2^2}{2}}\frac{x_2-b_k}{b_{k+1}-b_k}
\end{eqnarray*}
then the map $S:Q^j_k\ni x\mapsto (S_j(x_1),S^k(x_2))\in Q^j_k$ and its inverse switch the measures we are considering and fix the boundary of $Q^j_k$, and since
\begin{eqnarray*}
S_j'(x_1)&=&e^{\frac{S_j^2(x_1)}{2}}\frac{\int_{a_j}^{a_{j+1}}dy_1e^{-\frac{y_1^2}{2}}}{a_{j+1}-a_j}\in\left[e^{-\frac{|a_{j+1}^2-a_j^2|}{2}},e^{\frac{|a_{j+1}^2-a_j^2|}{2}}\right]
\\
{S^k}'(x_2)&=&e^{\frac{{S^k}^2(x_2)}{2}}\frac{\int_{b_k}^{b_{k+1}}dy_2e^{-\frac{y_2^2}{2}}}{b_{k+1}-b_k}\in\left[e^{-\frac{|b_{k+1}^2-b_k^2|}{2}},e^{\frac{|b_{k+1}^2-b_k^2|}{2}}\right]
\end{eqnarray*}
if $\epsilon(N):=2\epsilon+\frac{\epsilon^2}{r_N^2}$, we find
\begin{eqnarray*}
|a_{j+1}^2-a_j^2|\leq2\frac{|j|\epsilon^2}{r_N^2}+\frac{\epsilon^2}{r_N^2}\leq2\epsilon+\frac{\epsilon^2}{r_N^2}=\epsilon(N)
\\
|b_{k+1}^2-b_k^2|\leq2\frac{|k|\epsilon^2}{r_N^2}+\frac{\epsilon^2}{r_N^2}\leq2\epsilon+\frac{\epsilon^2}{r_N^2}=\epsilon(N)
\end{eqnarray*}
therefore 
\begin{eqnarray*}
e^{-\epsilon(N)}|x-y|^2\leq|S(x)-S(y)|^2\leq e^{\epsilon(N)}|x-y|^2
\end{eqnarray*}
and this concludes the proof.
\end{proof}

The following Lemma allows us to restrict to a good event in the bound from below for bipartite matching. It only uses Chernoff bound, as in \cite{AGT}.
\begin{lemma}\label{atheta}
Let $\tilde X_1,\dots,\tilde X_N$ and $\tilde Y_1,\dots,\tilde Y_N$ be independent random variables with common distribution $\rho_N$. Then if $\theta=\theta(N):=\frac{1}{(\log N)^{\xi}}$, $0<\xi<\frac{\alpha-1}{2}<\frac{1}{2},$ and
$$
A_{\theta}:=\bigcap_{j,k}\left\{\begin{array}{lcr}|N^j_k-N\rho_N(Q^j_k)|&\leq&\theta N\rho_N(Q^j_k)\\|M^j_k-N\rho_N(Q^j_k)|&\leq&\theta N\rho_N(Q^j_k)\end{array}\right\}
$$
it holds
$$
\mathbb{P}(A^c_{\theta})\xrightarrow[N\to\infty]{}0
$$
\end{lemma}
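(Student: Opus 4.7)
The plan is to reduce the event $A_\theta^c$ to a binomial deviation and then combine the multiplicative Chernoff bound with a union bound over the family $\mathcal{Q}$. For each fixed square $Q^j_k\in\mathcal{Q}$ the random variable $N^j_k=\sum_{i=1}^N\mathbbm{1}(\tilde X_i\in Q^j_k)$ is a sum of $N$ i.i.d.\ Bernoulli$(\rho_N(Q^j_k))$ variables with mean $\mathbb{E}(N^j_k)=N\rho_N(Q^j_k)$, and the same is true for $M^j_k$. The standard multiplicative Chernoff estimate yields, for $0<\theta<1$,
\begin{equation*}
\mathbb{P}\bigl(|N^j_k-N\rho_N(Q^j_k)|>\theta N\rho_N(Q^j_k)\bigr)\leq 2\exp\bigl(-c\,\theta^2 N\rho_N(Q^j_k)\bigr)
\end{equation*}
for some universal $c>0$, and the identical bound for $M^j_k$.

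Next I would plug in the uniform lower bound on the expected count that was already recorded in the paragraph introducing $\mathcal{Q}$: since $\rho_N\geq\rho$ on $E_N$, the estimate displayed there gives
\begin{equation*}
N\rho_N(Q^j_k)\;\geq\;N\rho(Q^j_k)\;\geq\;c\,\epsilon^2(\log N)^{\alpha-1}
\end{equation*}
uniformly in $j,k$. Combined with the choice $\theta=(\log N)^{-\xi}$ this turns the Chernoff bound into
\begin{equation*}
\mathbb{P}\bigl(|N^j_k-N\rho_N(Q^j_k)|>\theta N\rho_N(Q^j_k)\bigr)\leq 2\exp\bigl(-c\,\epsilon^2(\log N)^{\alpha-1-2\xi}\bigr),
\end{equation*}
and likewise for the analogous event for $M^j_k$.

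Finally I would take the union bound over all squares in $\mathcal{Q}$. Each square has side $\epsilon/r_N$ and they cover the disk of radius $r_N\sim\sqrt{2\log N}$, so $|\mathcal{Q}|$ is of order $r_N^4/\epsilon^2=O((\log N)^2/\epsilon^2)$. Consequently
\begin{equation*}
\mathbb{P}(A_\theta^c)\;\leq\;\frac{C(\log N)^2}{\epsilon^2}\,\exp\bigl(-c\,\epsilon^2(\log N)^{\alpha-1-2\xi}\bigr),
\end{equation*}
and the hypothesis $0<\xi<(\alpha-1)/2$ makes the exponent $\alpha-1-2\xi$ strictly positive, so a stretched-exponential factor crushes the polynomial $(\log N)^2$ from the union bound and the right-hand side tends to $0$. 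There is no real obstacle here; the only point to verify carefully is the uniformity of the multiplicative Chernoff bound when the success probability $\rho_N(Q^j_k)$ is tiny, which is standard. The argument is exactly the Chernoff reduction used in \cite{AGT}, the role of the restriction $\xi<(\alpha-1)/2$ being precisely to leave enough concentration after the cut-off has thinned the density near the boundary of $\{|x|\leq r_N\}$.
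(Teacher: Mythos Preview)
Your proposal is correct and follows essentially the same approach as the paper: Chernoff's bound for each binomial $N^j_k$ (and $M^j_k$), the uniform lower bound $N\rho_N(Q^j_k)\geq c\epsilon^2(\log N)^{\alpha-1}$, and a union bound using $|\mathcal{Q}|\leq c(\log N)^2/\epsilon^2$. The paper's argument is identical in structure and detail.
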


\begin{proof} Thanks to Chernoff bound we have
\begin{eqnarray*}
\mathbb{P}\left(|N^j_k-N\rho_N(Q^j_k)|\geq \theta N\rho_N(Q^j_k)\right)&\leq&\exp\left\{-\frac{\theta^2}{2}N\rho_N(Q^j_k)\right\}
\\
&\leq&\exp\left\{-c\epsilon^2(\log N)^{\alpha-1-2\xi}\right\}
\end{eqnarray*}
therefore
\begin{eqnarray*}
\mathbb{P}(A^c_{\theta})&\leq&2\sum_{j,k}\exp\left\{-c\epsilon^2(\log N)^{\alpha-1-2\xi}\right\}
\\
&=&2|{\cal Q}|\exp\left\{-c\epsilon^2(\log N)^{\alpha-1-2\xi}\right\}
\\
&\leq&c\frac{(\log N)^2}{\epsilon^2}\exp\left\{-c\epsilon^2(\log N)^{\alpha-1-2\xi}\right\}
\end{eqnarray*}
that implies the thesis.

\end{proof}

Finally, this last Proposition collects all the contributions to the total cost given from each square. It makes rigorous the idea explained at the beginning of this Section.
\begin{proposition}\label{totalcost}
Let $\tilde X_1,\dots,\tilde X_N$ be independent random variables with common distribution $\rho_N$. Then we have
$$
\frac{\sum_{j,k}|Q^j_k|\log\mathbb{E}(N^j_k)}{(\log N)^2}\xrightarrow[N\to\infty]{}\pi
$$
\end{proposition}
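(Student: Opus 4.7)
The plan is to split $\log\mathbb{E}(N^j_k)$ into a dominant piece and negligible pieces, and then recognize the surviving sum as a Riemann approximation of a Gaussian integral that is elementary to evaluate.

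First, I would write
\begin{equation*}
\mathbb{E}(N^j_k)=N\rho_N(Q^j_k)=\frac{N}{\rho(E_N)}\rho(Q^j_k),
\end{equation*}
so that
\begin{equation*}
\log\mathbb{E}(N^j_k)=\log N-\log\rho(E_N)+\log\rho(Q^j_k).
\end{equation*}
Since $E_N\supseteq\{|x|\leq r_N\}$, the Gaussian tail bound gives $\rho(E_N)\geq 1-(\log N)^\alpha/N$, hence $\log\rho(E_N)=o(1)$. Next, on each square $Q^j_k$, because $|Q^j_k|=(\epsilon/r_N)^2$ is small and $\rho$ is smooth, the mean value theorem supplies a point $\xi_{j,k}\in Q^j_k$ with $\rho(Q^j_k)=|Q^j_k|\rho(\xi_{j,k})$, so
\begin{equation*}
\log\rho(Q^j_k)=2\log(\epsilon/r_N)-\log(2\pi)-\tfrac{1}{2}|\xi_{j,k}|^2.
\end{equation*}

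Multiplying by $|Q^j_k|$ and summing, I would obtain
\begin{equation*}
\sum_{j,k}|Q^j_k|\log\mathbb{E}(N^j_k)=|E_N|\bigl[\log N+2\log(\epsilon/r_N)-\log(2\pi)-\log\rho(E_N)\bigr]-\tfrac{1}{2}\sum_{j,k}|Q^j_k|\,|\xi_{j,k}|^2,
\end{equation*}
where $|E_N|=\pi r_N^2+O(\epsilon)$ since the boundary squares contribute only a perimeter-thick shell of width $O(\epsilon/r_N)$. Recalling $r_N^2=2\log N-2\alpha\log\log N$, the leading contribution of $|E_N|\log N$ is $2\pi(\log N)^2$, while $|E_N|\log(\epsilon/r_N)=O((\log N)(\log\log N))$ and the other corrections are $O(\log N)$, all of order $o((\log N)^2)$.

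For the remaining sum $\sum_{j,k}|Q^j_k|\,|\xi_{j,k}|^2$, I would view it as a Riemann sum: since $|x|^2$ is Lipschitz on $E_N$ with Lipschitz constant $O(r_N)$ and the mesh size is $O(\epsilon/r_N)$, the local approximation error per square is $O(|Q^j_k|\epsilon)$, summing to $O(\epsilon\,r_N^2)=O(\epsilon\log N)$. Hence
\begin{equation*}
\sum_{j,k}|Q^j_k|\,|\xi_{j,k}|^2=\int_{|x|\leq r_N}|x|^2\,dx+o((\log N)^2)=2\pi\int_0^{r_N}r^3\,dr+o((\log N)^2)=\frac{\pi r_N^4}{2}+o((\log N)^2),
\end{equation*}
and $\frac{\pi r_N^4}{2}\sim 2\pi(\log N)^2$. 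Putting everything together,
\begin{equation*}
\sum_{j,k}|Q^j_k|\log\mathbb{E}(N^j_k)\sim 2\pi(\log N)^2-\tfrac{1}{2}\cdot 2\pi(\log N)^2\cdot 2=\ldots=\pi(\log N)^2,
\end{equation*}
which yields the claim after division by $(\log N)^2$.

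The main obstacle, though mild, is the careful bookkeeping of lower-order terms: the $|E_N|\log(\epsilon/r_N)$ correction, the boundary squares that stick out of $\{|x|\leq r_N\}$, and the Riemann error must all be shown to be $o((\log N)^2)$. Each of these is easy separately, and the key identity driving the final constant is the cancellation $2\pi(\log N)^2-\pi(\log N)^2=\pi(\log N)^2$ between the bulk $\log N$ contribution and the Gaussian-weight contribution $\tfrac12|x|^2$.
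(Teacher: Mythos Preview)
Your argument is correct and in fact more direct than the paper's own proof. The paper treats the upper and lower bounds separately: for the $\limsup$ it partitions the disk into annuli $C_l=\{\sqrt{\alpha_l}\,r_N\le |x|\le \sqrt{\alpha_{l+1}}\,r_N\}$, bounds $\log\mathbb{E}(N^j_k)$ on each annulus by the coarser quantity $\log\mathbb{E}(N_l)\sim (1-\alpha_l)\log N$, and recognizes $\sum_l(1-\alpha_l)(\alpha_{l+1}-\alpha_l)$ as a Riemann sum for $\int_0^1(1-x)\,dx=\tfrac12$; for the $\liminf$ it invokes the concavity of $\log$ (Jensen) to pass from $\sum|Q^j_k|\log\bigl(\tfrac{1}{|Q^j_k|}\int_{Q^j_k}\rho_N\bigr)$ down to $\int_{E_N}\log\rho_N$. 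Your route bypasses both devices by a single expansion: you use the mean-value theorem to write $\log\rho(Q^j_k)=\log|Q^j_k|-\log(2\pi)-\tfrac12|\xi_{j,k}|^2$ exactly, and then the only analytic step is the Riemann approximation $\sum_{j,k}|Q^j_k|\,|\xi_{j,k}|^2=\int_{E_N}|x|^2\,dx+O(\epsilon\log N)$, which handles both directions simultaneously. This is cleaner and makes the origin of the constant~$\pi$ transparent as the cancellation $2\pi(\log N)^2-\pi(\log N)^2$. The only blemish is the displayed line $2\pi(\log N)^2-\tfrac12\cdot 2\pi(\log N)^2\cdot 2=\ldots=\pi(\log N)^2$, where the trailing ``$\cdot\,2$'' is spurious: since $\tfrac{\pi r_N^4}{2}\sim 2\pi(\log N)^2$, the subtracted term is $-\tfrac12\cdot 2\pi(\log N)^2=-\pi(\log N)^2$, and the identity then reads $2\pi(\log N)^2-\pi(\log N)^2=\pi(\log N)^2$ as your closing sentence correctly states.
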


\begin{proof} First, we focus on the estimate from above, therefore we choose $0=\alpha_0<\alpha_1<\dots<\alpha_L=1$ and we define
\begin{eqnarray*}
C_l&:=&\{x\in\mathbb{R}^2|\sqrt{\alpha_l}r_N\leq|x|\leq\sqrt{\alpha_{l+1}}r_N\}
\\
W_l&:=&\bigcup_{j,k:Q^j_k\cap C_l\neq\emptyset}Q^j_k
\\
N_l&:=&\sum_{i=1}^N\mathbbm{1}(X_i\in W_l)=\sum_{j,k:Q^j_k\cap C_l\neq\emptyset}N^j_k
\end{eqnarray*}
so that
\begin{eqnarray*}
\textrm{ if $l=0$ }\quad\log\mathbb{E}(N_l)&\leq&\log N
\\
\textrm{ if $l>0$ }\quad\log\mathbb{E}(N_l)&\leq&\log\left[N\int_{\sqrt{\alpha_l}r_N-\frac{\epsilon\sqrt{2}}{r_N}}^{\sqrt{\alpha_{l+1}}r_N+\frac{\epsilon\sqrt{2}}{r_N}}dr2\pi r\frac{e^{-\frac{r^2}{2}}}{2\pi\rho(E_N)}\right]
\\
&\leq&\log\left[N\int_{\sqrt{\alpha_l}r_N-\frac{\epsilon\sqrt{2}}{r_N}}^{+\infty}dr re^{-\frac{r^2}{2}}\right]-\log\rho(E_N)
\\
&\leq&\log N(1-\alpha_l)+c\log\log N
\end{eqnarray*}
and
\begin{eqnarray*}
\sum_{j,k:Q^j_k\cap C_l\neq\emptyset}|Q^j_k|\leq2\pi(\alpha_{l+1}-\alpha_l)\log N+c
\end{eqnarray*}
that implies
\begin{eqnarray*}
\sum_{j,k}|Q^j_k|\log\mathbb{E}(N^j_k)&\leq&\sum_{l=0}^{L-1}\sum_{j,k:Q^j_k\cap C_l\neq\emptyset}|Q^j_k|\log\mathbb{E}(N^j_k)
\\
&\leq&\sum_{l=0}^{L-1}\log\mathbb{E}(N_l)\sum_{j,k:Q^j_k\cap C_l\neq\emptyset}|Q^j_k|
\\
&\leq&2\pi\sum_{l=0}^{L-1}(1-\alpha_l)(\alpha_{l+1}-\alpha_l)(\log N)^2+c\log N\log\log N
\end{eqnarray*}
therefore
\begin{eqnarray*}
\limsup_{N\to\infty}\frac{\sum_{j,k}|Q^j_k|\log\mathbb{E}(N^j_k)}{(\log N)^2}\leq2\pi\sum_{l=0}^{L-1}(1-\alpha_l)(\alpha_{l+1}-\alpha_l)
\end{eqnarray*}
Now we recognize in the right hand side of this inequality a Riemann sum for the function $f(x)=1-x$ which verifies
$$
\int_0^1dx(1-x)=\frac{1}{2}
$$
and since our choice of $\{\alpha_l\}_{l=0}^{L}$ was arbitrary in $[0,1]$ we have
\begin{eqnarray*}
\limsup_{N\to\infty}\frac{\sum_{j,k}|Q^j_k|\log\mathbb{E}(N^j_k)}{(\log N)^2}\leq\pi
\end{eqnarray*}
As for the estimate from below, since function $\log x$ is concave, for a suitable function $\zeta(N)\xrightarrow[N\to\infty]{}0$ we have
\begin{eqnarray*}
&&\sum_{j,k}|Q^j_k|\log\mathbb{E}(N^j_k)
\\
&=&\log N\sum_{j,k}|Q^j_k|+\sum_{j,k}|Q^j_k|\log\left(\frac{1}{|Q^j_k|}\int_{Q^j_k}dx\rho_N(x)\right)+\sum_{j,k}|Q^j_k|\log|Q^j_k|
\\
&\geq&2\pi(\log N)^2(1-\zeta(N))+\int_{E_N}dx\log\rho_N(x)-\log\left(\frac{2\log N}{\epsilon^2}\right)\sum_{j,k}|Q^j_k|
\\
&\geq&\pi(\log N)^2(1-\zeta(N))-c\log N\log\log N
\end{eqnarray*}
therefore
\begin{eqnarray*}
\liminf_{N\to\infty}\frac{\sum_{j,k}|Q^j_k|\log\mathbb{E}(N^j_k)}{(\log N)^2}\geq\pi
\end{eqnarray*}

\end{proof}

\subsection{Convergence Theorems}\label{subsec7}

In this Subsection we prove Theorem \ref{gauxthm}.

Thanks to Lemma \ref{semibip} it is sufficient to prove the upper bound for semidiscrete matching, in Theorem \ref{uppergaux}, and the lower bound for bipartite matching, in Theorem \ref{lowergaux}. The structure of the proof is the same as Theorem 1 in \cite{BC} and Theorem 1.1 and 1.2 in \cite{AGT}.

Both in Theorem \ref{uppergaux} and in Theorem \ref{lowergaux}, the first step consists in substituting $N$ independent random variables with common distribution $\rho$ with $N$ independent random variables with common distribution $\rho_N$. This is possible thanks to Lemma \ref{cutoff}.

Then we have to bound the distance between two measures, one of which is $\rho$ (in semidiscrete case, the first one) or the empirical measure on $N$ independent random variables with distribution $\rho_N$ (in bipartite case, the second one) while the other is the same measure as the first , multiplied in each square $Q^j_k$ by $\frac{N^j_k}{N\rho(Q^j_k)}$. This factor is expected to be very close to one, and this is the reason why the two measures involved are similar. This is possible thanks to Proposition \ref{oksquares}.

At this point, we are allowed to compute the total cost of the problem as the sum of the costs on the squares. When proving the upper bound we use a  subadditivity argument while for the lower bound we use, as in \cite{AGT}, the distance introduced in \cite{FG}, that is \eqref{figalligigli}. This distance is superadditive.

Then, since we need that in each square $Q^j_k$ there is an increasing (with $N$) number of particles, in Theorem \ref{uppergaux} we consider separately two events: the event in which the number of particles in the square is close to its expected value, and its complementary, and we show that the contribution of the second event is negligible. In Theorem \ref{lowergaux} we simply restrict to a good event (that is $A_{\theta}$) and thanks to Lemma \ref{atheta} we are sure its probability to be close to $1$.

Once made these assumptions, thanks to Lemma \ref{touniform} we can approximate the probability measure on the square $Q^j_k$ whose density is $\frac{\rho_N\mathbbm{1}_{Q^j_k}}{\rho_N(Q^j_k)}$ with the uniform measure on the square itself. Therefore, using the results obtained in \cite{AGT} and \cite{AST} except for a factor $4\pi$ or $2\pi$ the cost with uniform measure (on the square $Q^j_k$) is bounded from above and below by a term close to
$$
|Q^j_k|\frac{\log N^j_k}{N^j_k}\approx|Q^j_k|\frac{\log \mathbb{E}(N^j_k)}{N^j_k}
$$
Finally, the total cost is a convex combination of all these contributions and the main term in the estimate turns out to be
$$
\frac{\sum_{j,k}|Q^j_k|\log\mathbb{E}(N^j_k)}{N}
$$
divided by $4\pi$ in Theorem \ref{uppergaux} and $2\pi$ in Theorem \ref{lowergaux}. We have already examined it in Proposition \ref{totalcost}, and this concludes both the proofs.
\begin{theorem}\label{uppergaux}
If $X_1,\dots,X_N$ are independent random variables with common distribution $\rho$, it holds
$$
\limsup_{N\to\infty}\frac{N}{(\log N)^2}\mathbb{E}\left[W_2^2\left(\frac{1}{N}\sum_{i=1}^N\delta_{X_i},\rho\right)\right]\leq\frac{1}{4}
$$
\end{theorem}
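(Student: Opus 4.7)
The plan is to follow the same four-stage scheme as Proposition \ref{uppermulti} but with the annuli $C_l$ replaced by the small squares $Q^j_k$, and with Lemma \ref{cutoff} prepended so that we may work against $\rho_N$ rather than $\rho$. First I would use \eqref{cutoff2} of Lemma \ref{cutoff} together with the triangle inequality and convexity (with an auxiliary parameter $\beta>0$, eventually sent to $0$) to pass from the original empirical measure built on the $X_i$'s to the empirical measure built on $\tilde X_i:=T(X_i)$, which is i.i.d.\ with law $\rho_N$. The extra cost incurred is $O((\log N)^\alpha/N)$, which is negligible compared with $(\log N)^2/N$.

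Next, writing $\frac{1}{N}\sum_i\delta_{\tilde X_i}=\sum_{j,k: N^j_k>0}\frac{N^j_k}{N}\mu^{N^j_k}$, where $\mu^{N^j_k}$ is the empirical measure of the points falling in $Q^j_k$, I would apply the triangle inequality and convexity of $W_2^2$ with another parameter $\beta>0$ to split
\begin{align*}
W_2^2\!\left(\tfrac{1}{N}\textstyle\sum_i\delta_{\tilde X_i},\rho_N\right)
&\leq (1+\beta)\!\!\sum_{j,k:N^j_k>0}\!\!\tfrac{N^j_k}{N}W_2^2\!\left(\mu^{N^j_k},\tfrac{\rho_N\mathbbm{1}_{Q^j_k}}{\rho_N(Q^j_k)}\right) \\
&\quad +\tfrac{1+\beta}{\beta}W_2^2\!\left(\textstyle\sum_{j,k}\tfrac{N^j_k}{N}\tfrac{\rho_N\mathbbm{1}_{Q^j_k}}{\rho_N(Q^j_k)},\rho_N\right).
\end{align*}
The second summand is handled directly by \eqref{oksquares3} of Proposition \ref{oksquares}, giving a contribution of order $(\log N)^{3/2}/(\epsilon N)+(\log N)^\alpha/N=o((\log N)^2/N)$.

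For the principal term, following the $A_l$ argument of Proposition \ref{uppermulti} I would split according to $\{N^j_k\geq \tfrac12 N\rho_N(Q^j_k)\}$ and its complement; on the complement Chebyshev shows the total contribution is $O(1/N)$ since $\mathrm{diam}(Q^j_k)^2$ is uniformly small and the probabilities are bounded by $4/(N\rho_N(Q^j_k))$. On the good event I would condition on $N^j_k$, apply Lemma \ref{touniform} to transfer the problem on $Q^j_k$ to a uniform matching problem on the same square, and then use \eqref{teoremone1} of Theorem \ref{teoremone}. Since the minimum expected square count $c\epsilon^2(\log N)^{\alpha-1}\to\infty$ uniformly in $(j,k)$, the convergence in \eqref{teoremone1} is uniform in the sense that
\begin{equation*}
\mathbb{E}_{N^j_k}\!\left[W_2^2\!\left(\mu^{N^j_k},\tfrac{\rho_N\mathbbm{1}_{Q^j_k}}{\rho_N(Q^j_k)}\right)\right]\leq \tfrac{|Q^j_k|}{4\pi N^j_k}\log N^j_k\,(1+\omega(N))
\end{equation*}
for a common $\omega(N)\to 0$, the rescaling factor from Lemma \ref{touniform} being $e^{\epsilon(N)}=1+o(1)$ for fixed small $\epsilon$.

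Multiplying by $N^j_k/N$, summing, and using the concavity of $\log$ (Jensen) gives
\begin{equation*}
\mathbb{E}\!\left[\sum_{j,k:N^j_k>0}\tfrac{N^j_k}{N}W_2^2\!\left(\mu^{N^j_k},\tfrac{\rho_N\mathbbm{1}_{Q^j_k}}{\rho_N(Q^j_k)}\right)\right]\leq \tfrac{1+\omega(N)}{4\pi N}\sum_{j,k}|Q^j_k|\log\mathbb{E}(N^j_k)+\tfrac{c}{N},
\end{equation*}
and Proposition \ref{totalcost} identifies the right-hand side as $\tfrac{(\log N)^2}{4N}(1+o(1))$ up to the factor $e^{\epsilon(N)}\to e^{2\epsilon}$; letting first $\beta\to 0$ and then $\epsilon\to 0$ yields the stated bound $1/4$. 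The main obstacle I anticipate is the bookkeeping needed to guarantee that the small-probability event $\{N^j_k<\tfrac12 N\rho_N(Q^j_k)\}$ contributes negligibly uniformly over the $O((\log N)^2/\epsilon^2)$ squares, and to control the two multiplicative distortions $1+\beta$ and $e^{\epsilon(N)}$ simultaneously so that they can be pushed to $1$ in the correct order; the rest reduces to the estimates of the preceding subsection.
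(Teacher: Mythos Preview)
Your proposal is correct and follows essentially the same route as the paper's proof: cut off via Lemma \ref{cutoff}, decompose over the squares $Q^j_k$ using triangle inequality/convexity with an auxiliary parameter, control the redistribution term by Proposition \ref{oksquares}, handle the bad event $\{N^j_k<\tfrac12\mathbb{E}(N^j_k)\}$ by Chebyshev, and on the good event invoke Lemma \ref{touniform} plus \eqref{teoremone1} and then Proposition \ref{totalcost}, finally sending the auxiliary parameters to zero. The only slip is your claim that the bad-event contribution is $O(1/N)$: summing $c|Q^j_k|/N$ over all squares gives $O(\log N/N)$, as in the paper's \eqref{uppergaux2}, but this is still negligible against $(\log N)^2/N$.
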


\begin{proof} As explained before, first we substitute $X_1,\dots,X_N$ with $N$ independent random variables distributed with the probability measure whose density is $\rho_N$. If $\gamma>0$ and if $T:\mathbb{R}^2\to\mathbb{R}^2$ is the map that transports $\rho$ in $\rho_N$, we denote $\tilde X_i:=T(X_i)$. Using the triangle inequality we have
\begin{eqnarray}
\nonumber\mathbb{E}\left[W_2^2\left(\frac{1}{N}\sum_{i=1}^N\delta_{X_i},\rho\right)\right]&\leq&(1+\gamma)\mathbb{E}\left[\sum_{j,k:N^j_k>0}\frac{N^j_k}{N}W_2^2\left(\frac{1}{N^j_k}\sum_{i:\tilde X_i\in Q^j_k}\delta_{\tilde X_i},\frac{\rho\mathbbm{1}_{Q^j_k}}{\rho(Q^j_k)}\right)\right]
\\
\nonumber&+&2\frac{1+\gamma}{\gamma}\mathbb{E}\left[W_2^2\left(\frac{1}{N}\sum_{i=1}^N\delta_{X_i},\frac{1}{N}\sum_{i=1}^N\delta_{\tilde X_i}\right)\right]
\\
\label{uppergaux1}&+&2\frac{1+\gamma}{\gamma}\mathbb{E}\left[W_2^2\left(\sum_{j,k}\frac{N^j_k}{N}\frac{\rho\mathbbm{1}_{Q^j_k}}{\rho(Q^j_k)},\rho\right)\right]
\end{eqnarray}
The first term in the sum in \eqref{uppergaux1} gives the main contribution, while we have a bound for the second and the third one thanks to \eqref{cutoff2} of Lemma \ref{cutoff} and \eqref{oksquares3} of Proposition \ref{oksquares}. So we only focus on the first term.

To estimate it first we exclude the events with few particles in any square $Q^j_k$, therefore we define
\begin{eqnarray*}
A^j_k:=\left\{N^j_k\geq\frac{\mathbb{E}(N^j_k)}{2}\right\}=\left\{N^j_k\geq\frac{N\rho_N(Q^j_k)}{2}\right\}
\end{eqnarray*}
and we observe that the contributions in the events ${A^j_k}^c$ are negligible, indeed
\begin{eqnarray*}
&&\mathbb{E}\left[\frac{N^j_k}{N}W_2^2\left(\frac{1}{N^j_k}\sum_{i:\tilde X_i\in Q^j_k}\delta_{\tilde X_i},\frac{\rho\mathbbm{1}_{Q^j_k}}{\rho(Q^j_k)}\right)\mathbbm{1}_{{A^j_k}^c}\mathbbm{1}(N^j_k>0)\right]
\\
&\leq&\frac{\rho_N(Q^j_k)}{2}\frac{2\epsilon^2}{r_N^2}\mathbb{P}({A^j_k}^c)\leq\frac{\rho_N(Q^j_k)}{2}\frac{2\epsilon^2}{r_N^2}\frac{4}{N\rho_N(Q^j_k)}=\frac{4|Q^j_k|}{N}
\end{eqnarray*}
and therefore
\begin{eqnarray}
\nonumber&&\mathbb{E}\left[\sum_{j,k:N^j_k>0}\frac{N^j_k}{N}W_2^2\left(\frac{1}{N^j_k}\sum_{i:\tilde X_i\in Q^j_k}\delta_{\tilde X_i},\frac{\rho\mathbbm{1}_{Q^j_k}}{\rho(Q^j_k)}\right)\right]
\\
\nonumber&\leq&\mathbb{E}\left[\sum_{j,k}\frac{N^j_k}{N}W_2^2\left(\frac{1}{N^j_k}\sum_{i:\tilde X_i\in Q^j_k}\delta_{\tilde X_i},\frac{\rho\mathbbm{1}_{Q^j_k}}{\rho(Q^j_k)}\right)\mathbbm{1}_{A^j_k}\right]+4\frac{\sum_{j,k}|Q^j_k|}{N}
\\
\label{uppergaux2}&\leq&\sum_{j,k}\mathbb{E}\left[\frac{N^j_k}{N}\mathbbm{1}_{A^j_k}\mathbb{E}_{N^j_k}\left[W_2^2\left(\sum_{i:\tilde X_i\in Q^j_k}\delta_{\tilde X_i},\frac{\rho\mathbbm{1}_{Q^j_k}}{\rho(Q^j_k)}\right)\right]\right]+c\frac{\log N}{N}
\end{eqnarray}
Then, we use \eqref{touniform1} of Lemma \ref{touniform}, therefore if $Z_i:=S(\tilde X_i)$ where $S$ is the map that transports $\frac{\rho\mathbbm{1}_{Q^j_k}}{\rho(Q^j_k)}$ in the uniform measure on the square $Q^j_k$, we have
\begin{eqnarray*}
\mathbb{E}_{N^j_k}\left[W_2^2\left(\frac{1}{N^j_k}\sum_{i:X_i\in Q^j_k}\delta_{X_i},\frac{\rho\mathbbm{1}_{Q^j_k}}{\rho(Q^j_k)}\right)\right]\leq e^{\epsilon(N)}\mathbb{E}_{N^j_k}\left[W_2^2\left(\frac{1}{N^j_k}\sum_{i:\tilde X_i\in Q^j_k}\delta_{Z_i},\frac{\mathbbm{1}_{Q^j_k}}{|Q^j_k|}\right)\right]
\end{eqnarray*}
for a suitable function $\epsilon(N)\xrightarrow[N\to\infty]{}2\epsilon$.

Moreover, since in the event $A^j_k$, $N^j_k\geq\frac{\mathbb{E}(N^j_k)}{2}\geq\frac{\epsilon^2}{r_N^2}\frac{e^{-\frac{r_N^2}{2}}}{2\sqrt{2\pi}}\geq c\epsilon^2(\log N)^{\alpha-1}$, we can use \eqref{teoremone1} of Theorem \ref{teoremone} to find a function $\omega(N)\xrightarrow[N\to\infty]{}0$ such that in $A^j_k$
\begin{eqnarray*}
\mathbb{E}_{N^j_k}\left[W_2^2\left(\frac{1}{N^j_k}\sum_{i:\tilde X_i\in Q^j_k}\delta_{\tilde X_i},\frac{\rho\mathbbm{1}_{Q^j_k}}{\rho(Q^j_k)}\right)\right]\leq e^{\epsilon(N)}|Q^j_k|\frac{\log N^j_k}{4\pi N^j_k}\left(1+\omega(N)\right)
\end{eqnarray*}
which leads to
\begin{eqnarray}
\nonumber&&\mathbb{E}\left[\frac{N^j_k}{N}\mathbbm{1}_{A_j}\mathbb{E}_{N^j_k}\left[W_2^2\left(\sum_{i:\tilde X_i\in Q^j_k}\delta_{\tilde X_i},\frac{\rho\mathbbm{1}_{Q^j_k}}{\rho(Q^j_k)}\right)\right]\right]
\\
\nonumber&\leq&\mathbb{E}\left[\frac{N^j_k}{N}\mathbbm{1}_{A_j}e^{\epsilon(N)}|Q^j_k|\frac{\log N^j_k}{4\pi N^j_k}\left[1+\omega(N)\right]\right]
\\
\label{uppergaux3}&\leq&|Q^j_k|\frac{\log \mathbb{E}(N^j_k)}{4\pi N}\left[1+\omega(N)\right]
\end{eqnarray}
Finally, combining \eqref{uppergaux1}, \eqref{uppergaux2} and \eqref{uppergaux3} and using Lemma \ref{cutoff} and Proposition \ref{oksquares}, we have
\begin{eqnarray*}
\mathbb{E}\left[W_2^2\left(\frac{1}{N}\sum_{i=1}^N\delta_{X_i},\rho\right)\right]\leq(1+\gamma)e^{\epsilon(N)}\frac{\sum_{j,k}|Q^j_k|\log\mathbb{E}(N^j_k)}{4\pi N}\left[1+\omega(N)\right]
\\
+(1+\gamma)c\frac{\log N}{N}+2\frac{1+\gamma}{\gamma}c\frac{(\log N)^{\alpha}}{N}+2\frac{1+\gamma}{\gamma}c\left[\frac{(\log N)^{\frac{3}{2}}}{\epsilon N}+\frac{(\log N)^{\alpha}}{N}\right]
\end{eqnarray*}
Using Proposition \ref{totalcost} we find
\begin{eqnarray*}
\limsup_{N\to\infty}\frac{N}{(\log N)^2}\mathbb{E}\left[W_2^2\left(\frac{1}{N}\sum_{i=1}^N\delta_{X_i},\rho\right)\right]\leq\frac{(1+\gamma)e^{2\epsilon}}{4}
\end{eqnarray*}
and letting $\gamma,\epsilon\to0$ we conclude.

\end{proof}

\begin{theorem}\label{lowergaux}
If $X_1,\dots,X_N$ and $Y_1,\dots,Y_N$ are independent random variables with common distribution $\rho$, we have
\begin{eqnarray*}
\liminf_{N\to\infty}\frac{N}{(\log N)^2}\mathbb{E}\left[W_2^2\left(\frac{1}{N}\sum_{i=1}^N\delta_{X_i},\frac{1}{N}\sum_{i=1}^N\delta_{Y_i}\right)\right]\geq\frac{1}{2}
\end{eqnarray*}
\end{theorem}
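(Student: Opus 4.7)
The plan is to mirror the proof of Proposition \ref{lowermulti} but using the square decomposition of Section \ref{sec3} instead of the annular one, exactly as Theorem \ref{uppergaux} mirrors Proposition \ref{uppermulti} from the upper-bound side.

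First I would apply Lemma \ref{cutoff} to replace $X_i, Y_i$ by $\tilde X_i := T(X_i)$ and $\tilde Y_i := T(Y_i)$ with common law $\rho_N$, noting that this substitution costs only $O((\log N)^\alpha/N)$, which is negligible compared with the target $(\log N)^2/N$. Next I would restrict to the good event $A_\theta$ with $\theta = \theta(N) = 1/(\log N)^\xi$ and $0 < \xi < (\alpha-1)/2$, using Lemma \ref{atheta} to ensure $\mathbb{P}(A_\theta^c)\to 0$; since the cost admits a deterministic bound of order $\log N$ (the support of $\rho_N$ lies in a ball of radius $r_N$), this restriction also contributes negligibly.

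The main step is the decomposition over squares. By the triangle inequality with parameter $\beta>0$ and the superadditivity of $Wb_2^2$ on disjoint Borel sets,
\begin{eqnarray*}
\mathbb{E}\!\left[W_2^2\!\left(\tfrac{1}{N}\!\sum_i \delta_{\tilde X_i}, \tfrac{1}{N}\!\sum_i \delta_{\tilde Y_i}\right)\mathbbm{1}_{A_\theta}\right]
&\geq& (1-\beta)\,\mathbb{E}\!\left[\sum_{j,k}\tfrac{N^j_k}{N}\,\mathbb{E}_{N^j_k,M^j_k}\!\left[Wb_2^2(\mu^{N^j_k},\nu^{M^j_k})\right]\mathbbm{1}_{A_\theta}\right]\\
&-& \tfrac{1-\beta}{\beta}\,\mathbb{E}\!\left[W_2^2\!\left(\sum_{j,k}\tfrac{N^j_k}{N}\tfrac{\rho\mathbbm{1}_{Q^j_k}}{\rho(Q^j_k)},\sum_{j,k}\tfrac{M^j_k}{N}\tfrac{\rho\mathbbm{1}_{Q^j_k}}{\rho(Q^j_k)}\right)\mathbbm{1}_{A_\theta}\right],
\end{eqnarray*}
where $\mu^{N^j_k}$ and $\nu^{M^j_k}$ are the empirical measures of the $\tilde X_i$ and $\tilde Y_i$ falling in $Q^j_k$. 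The error term is controlled by \eqref{oksquares5} of Proposition \ref{oksquares} together with \eqref{oksquares3} and \eqref{oksquares4}, which give a bound of order $(\log N)^{3/2}/(\epsilon\theta N) + \theta(\log N)^\alpha/N$; with our choice $\theta=1/(\log N)^\xi$ and $\alpha<2$, this is $o((\log N)^2/N)$ after we let $\beta\to 0$ in the end.

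For the main term, on the event $A_\theta$ each $N^j_k$ and $M^j_k$ is comparable to $\mathbb{E}(N^j_k) = N\rho_N(Q^j_k) \geq c\epsilon^2(\log N)^{\alpha-1}\to\infty$, and their ratio tends to $1$. Applying Lemma \ref{touniform} to transport $\frac{\rho_N\mathbbm{1}_{Q^j_k}}{\rho_N(Q^j_k)}$ to the uniform measure on $Q^j_k$ at the price of a factor $e^{-\epsilon(N)}$, and then invoking \eqref{teoremone2} of Theorem \ref{teoremone} on each square, gives a function $\omega(N)\to 0$ with
$$
\mathbb{E}_{N^j_k,M^j_k}\!\left[Wb_2^2(\mu^{N^j_k},\nu^{M^j_k})\right]\,\mathbbm{1}_{A_\theta} \;\geq\; e^{-\epsilon(N)}\,|Q^j_k|\,\frac{\log N^j_k}{2\pi N^j_k}\,(1-\omega(N))\,\mathbbm{1}_{A_\theta}.
$$
Multiplying by $N^j_k/N$, summing over $(j,k)$, and using $\log N^j_k \geq \log\mathbb{E}(N^j_k) + \log(1-\theta)$ inside $A_\theta$, the leading contribution becomes $\frac{1}{2\pi N}\sum_{j,k}|Q^j_k|\log\mathbb{E}(N^j_k)$ up to multiplicative factors tending to $1$. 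Proposition \ref{totalcost} then identifies the limit of this sum divided by $(\log N)^2$ as $\pi$, producing the constant $\frac{1}{2\pi}\cdot\pi = \frac{1}{2}$. Letting $\beta\to 0$ and $\epsilon\to 0$ concludes. The main technical obstacle is, as in Proposition \ref{lowermulti}, the control of the cross-term in the triangle inequality: one must verify that the estimates in Proposition \ref{oksquares}, which were designed with the parameter $\theta$ already tuned against the vanishing densities near $\partial E_N$, still beat $(\log N)^2/N$ after division by $\beta$, which is why $\xi<(\alpha-1)/2$ is chosen.
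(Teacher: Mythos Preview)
Your overall strategy matches the paper's proof almost exactly: cutoff via Lemma~\ref{cutoff}, restriction to $A_\theta$ via Lemma~\ref{atheta}, superadditivity of $Wb_2^2$ over the squares, Lemma~\ref{touniform} and \eqref{teoremone2} for the main term, and Proposition~\ref{totalcost} to identify the constant. However, the cross-term in your displayed triangle inequality is not the one the argument actually produces. To obtain the bipartite main term $\sum_{j,k}\tfrac{N^j_k}{N}\,Wb_2^2(\mu^{N^j_k},\nu^{M^j_k})$ via superadditivity, the intermediate measure in the triangle inequality must be $\sum_{j,k}\tfrac{N^j_k}{N}\,\nu^{M^j_k}$ (so that the weights on both sides coincide on each $Q^j_k$), and hence the error term is
\[
W_2^2\!\left(\sum_{j,k}\tfrac{N^j_k}{N}\,\nu^{M^j_k},\ \sum_{j,k}\tfrac{M^j_k}{N}\,\nu^{M^j_k}\right),
\]
involving the \emph{empirical} measures $\nu^{M^j_k}$, not the continuous densities $\rho\mathbbm{1}_{Q^j_k}/\rho(Q^j_k)$ you wrote. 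This is exactly the term \eqref{lowergaux2} in the paper, and Proposition~\ref{oksquares} alone does not control it.

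The missing step is Lemma~\ref{forbipartitematching} (Appendix), which converts this empirical cross-term into the continuous one handled by \eqref{oksquares5}, at the price of an additional contribution $c\,\theta\,N^{-1}\sum_{j,k}|Q^j_k|\log\mathbb{E}(N^j_k)$, of order $(\log N)^{2-\xi}/N$. The paper's bound on the cross-term is therefore
\[
\frac{(\log N)^{2-\xi}}{N}+\frac{(\log N)^{3/2+\xi}}{\epsilon N}+c\,\frac{(\log N)^{\alpha-\xi}}{N},
\]
not the expression you wrote; note in particular that the dominant piece $(\log N)^{2-\xi}/N$ is absent from your estimate. It is still $o((\log N)^2/N)$, so once you insert Lemma~\ref{forbipartitematching} your argument is complete and coincides with the paper's.
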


\begin{proof} \textit{(Sketch)} Except for some steps, the proof is very similar to the previous one, therefore we only underline the differences. Once made the substitution of $\{X_i,Y_i\}_{i=1}^N$ with $\{\tilde X_i,\tilde Y_i\}_{i=1}^N$, we restrict to the set
\begin{eqnarray*}
A_{\theta}:=\bigcap_{j,k}\left\{\begin{array}{lcr}|N^j_k-N\rho_N(Q^j_k)|&\leq&\theta N\rho_N(Q^j_k)\\|M^j_k-N\rho_N(Q^j_k)|&\leq&\theta N\rho_N(Q^j_k)\end{array}\right\}
\end{eqnarray*}
where $\theta=\frac{1}{(\log N)^{\xi}}$ and $0<\xi<\frac{\alpha-1}{2}<\frac{1}{2}$.

Then, if we rename
\begin{eqnarray*}
\mu^{N^j_k}:=\frac{1}{N^j_k}\sum_{i:\tilde X_i\in Q^j_k}\delta_{\tilde X_i}&;&\nu^{M^j_k}:=\frac{1}{M^j_k}\sum_{i:\tilde Y_i\in Q^j_k}\delta_{\tilde Y_i}
\end{eqnarray*}
we apply triangle inequality and superadditivity of $Wb_2^2$ to obtain
\begin{eqnarray}
\nonumber&&\mathbb{E}\left[W_2^2\left(\frac{1}{N}\sum_{i=1}^N\delta_{\tilde X_i},\frac{1}{N}\sum_{i=1}^N\delta_{\tilde Y_i}\right)\mathbbm{1}_{A_{\theta}}\right]
\\
\label{lowergaux1}&\geq&(1-\delta)\mathbb{E}\left[\sum_{j,k}\frac{N^j_k}{N}Wb_2^2(\mu^{N^j_k},\nu^{M^j_k})\mathbbm{1}_{A_{\theta}}\right]
\\
\label{lowergaux2}&-&\frac{1-\delta}{\delta}\mathbb{E}\left[W_2^2\left(\sum_{j,k}\frac{N^j_k}{N}\nu^{M^j_k},\sum_{j,k}\frac{M^j_k}{N}\nu^{M^j_k}\right)\mathbbm{1}_{A_{\theta}}\right]
\end{eqnarray}
The main term is \eqref{lowergaux1} and it can be estimated with \eqref{touniform2} and \eqref{teoremone2} of Theorem \ref{teoremone}, as in the previous Theorem.

Then, as proved in Lemma \ref{forbipartitematching} and Proposition \ref{oksquares}, up for a constant, the term in \eqref{lowergaux2} is bounded by
\begin{eqnarray*}
\frac{(\log N)^{2-\xi}}{N}+\frac{(\log N)^{\frac{3}{2}+\xi}}{\epsilon N}+c\frac{(\log N)^{\alpha-\xi}}{N}
\end{eqnarray*}
and, sending $\delta$ and $\epsilon$ (which is hidden in \eqref{lowergaux1}) to 0 we get the proof.

\end{proof}

\section{The Maxwellian density}\label{sec1}

In this Section we briefly focus on the Maxewellian density, that is
$$
\rho(x):=\mathbbm{1}_{[0,1]}(x_1)\mu(x_2)\quad;\quad\mu(z):=\frac{e^{-\frac{z^2}{2}}}{\sqrt{2\pi}}
$$
$\rho$ is uniform in one direction and Gaussian in the other one.

We consider $X_1,\dots,X_N$ and $Y_1,\dots,Y_N$ independent random variables with values in $(0,1)\times\mathbb{R}$ with density $\rho$.

Arguing as in the case of the Gaussian density, we can prove the following result
\begin{theorem}\label{Maxthm}
If $X_1,\dots,X_N$ and $Y_1,\dots,Y_N$ are independent random variables in $\mathbb{R}^2$ with common distribution $\rho$, we have
\begin{eqnarray*}
&&\frac{N}{(\log N)^{\frac{3}{2}}}\mathbb{E}\left[W_2^2\left(\frac{1}{N}\sum_{i=1}^N\delta_{X_i},\rho\right)\right]\xrightarrow[N\to\infty]{}\frac{\sqrt{2}}{3\pi}
\\
&&\frac{N}{(\log N)^{\frac{3}{2}}}\mathbb{E}\left[W_2^2\left(\frac{1}{N}\sum_{i=1}^N\delta_{X_i},\frac{1}{N}\sum_{i=1}^N\delta_{Y_i}\right)\right]\xrightarrow[N\to\infty]{}\frac{2\sqrt{2}}{3\pi}
\end{eqnarray*}
\end{theorem}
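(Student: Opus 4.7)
The plan is to adapt the Gaussian argument of Section \ref{sec3} to the product measure $\rho(x)=\mathbbm{1}_{[0,1]}(x_1)\mu(x_2)$, whose only unbounded direction is $x_2$. First I would cut off the tails at $r_N:=\sqrt{2\log(N/(\log N)^{\alpha})}$, $1<\alpha<2$, replacing $\rho$ by its renormalised restriction $\rho_N$ to the strip $E_N:=[0,1]\times[-r_N,r_N]$. The analog of Lemma \ref{cutoff} follows from Theorem \ref{talfor} applied to the $x_2$-marginal alone: the $x_1$-factor, being compactly supported and bounded, contributes only a finite entropy term. Then I would tile $E_N$ by rectangles $Q^j_k=(a_j,a_{j+1})\times(b_k,b_{k+1})$ of side $\epsilon/r_N$, aligning the $x_1$-mesh with $\{0,1\}$ so that no cell is clipped in the first coordinate.

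All the preliminary estimates of Subsection \ref{subsec6} transfer essentially verbatim. In the analog of Lemma \ref{touniform}, the rearrangement $S:Q^j_k\to Q^j_k$ that sends $\rho_N|_{Q^j_k}/\rho_N(Q^j_k)$ to $\mathbbm{1}_{Q^j_k}/|Q^j_k|$ factors as the identity in $x_1$ and the 1D Gaussian rearrangement in $x_2$, whose Jacobian is controlled by $|b_{k+1}^2-b_k^2|\leq 2\epsilon+\epsilon^2/r_N^2$ exactly as there. In the analog of Proposition \ref{oksquares}, the product structure again reduces each row estimate to a 1D Benamou--Brenier computation in $x_2$ (the uniform $x_1$-marginal cancels), and the Chernoff bound of Lemma \ref{atheta} is formally identical since the smallest expected cell count still satisfies $\mathbb{E}(N^j_k)\gtrsim\epsilon^2(\log N)^{\alpha-1}\to\infty$.

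The genuinely new calculation is the analog of Proposition \ref{totalcost}, where one must show
\begin{equation*}
\frac{1}{(\log N)^{3/2}}\sum_{j,k}|Q^j_k|\log\mathbb{E}(N^j_k)\xrightarrow[N\to\infty]{}\frac{4\sqrt{2}}{3}.
\end{equation*}
This is a Riemann approximation to $\int_0^1 dx_1\int_{-r_N}^{r_N}dx_2\,(\log N-x_2^2/2)=2r_N\log N-r_N^3/3$, which asymptotes to $2\sqrt{2}(\log N)^{3/2}-\tfrac{2\sqrt{2}}{3}(\log N)^{3/2}=\tfrac{4\sqrt{2}}{3}(\log N)^{3/2}$. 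The matching upper and lower bounds are obtained, as in Proposition \ref{totalcost}, by stratifying $E_N$ via the level sets $\{\sqrt{2\alpha_l\log N}\leq|x_2|\leq\sqrt{2\alpha_{l+1}\log N}\}$ for an arbitrary partition $0=\alpha_0<\cdots<\alpha_L=1$; after the change of variables $t=\sqrt{\alpha}$ the sum becomes a Riemann sum for $\int_0^1(1-t^2)\,dt=2/3$, yielding the same constant in the limit $\max_l(\alpha_{l+1}-\alpha_l)\to 0$.

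Feeding this into the semidiscrete upper bound (analog of Theorem \ref{uppergaux}) gives $\tfrac{1}{4\pi}\cdot\tfrac{4\sqrt{2}}{3}=\tfrac{\sqrt{2}}{3\pi}$, and into the bipartite lower bound via $Wb_2^2$ superadditivity (analog of Theorem \ref{lowergaux}) gives $\tfrac{1}{2\pi}\cdot\tfrac{4\sqrt{2}}{3}=\tfrac{2\sqrt{2}}{3\pi}$, matching the statement. The main obstacle I anticipate is purely bookkeeping: verifying that the error terms from the Proposition \ref{oksquares} analog, now of order $(\log N)^{3/2-\eta}/N$ for various $\eta>0$, remain strictly below the new leading rate $(\log N)^{3/2}/N$. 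The exponent margin is tighter than in the Gaussian case (where the rate is $(\log N)^2/N$), so the admissible range of $\alpha\in(1,2)$ and of $\xi$ in the definition $\theta=(\log N)^{-\xi}$ must be chosen slightly more carefully here, but no conceptual difficulty beyond this arises.
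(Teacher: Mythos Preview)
Your proposal is correct and follows essentially the same route as the paper, which likewise cuts off at $r_N=\sqrt{2\log(N/(\log N)^\alpha)}$, tiles the strip by squares of side $\sim 1/r_N$, and declares the remaining steps analogous to the Gaussian case. Two small corrections: in the analog of Proposition~\ref{oksquares} with horizontal rectangles $R_k=\cup_j Q^j_k$, the within-row Benamou--Brenier computation is in $x_1$ (uniform, hence even easier than the Gaussian row), while it is the rectangles-to-$\rho$ step that lives in $x_2$; and the exponent margins are in fact no tighter than in the Gaussian case, since the 1D tail gains a factor $(\log N)^{-1/2}$ and the row length drops from $\sqrt{\log N}$ to $1$, so the same range $1<\alpha<2$, $0<\xi<(\alpha-1)/2$ works without extra care.
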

The idea is again that the number of particles $X_i$ or $Y_i$ close to a point $x\in(0,1)\times\mathbb{R}$ is $Ne^{-\frac{x_2^2}{2}}dx$, that is strictly smaller than 1 when $|x_2|>\sqrt{2\log N}$. Therefore, also if the density $\rho$ has an unbounded support, we expect all the particles to be in the domain $(0,1)\times(-\sqrt{2\log N},\sqrt{2\log N})$. As for the case of the Gaussian density we expect the total cost to be estimated by
$$
\frac{1}{N}\int_0^1dx_1\int_{-\sqrt{2\log N}}^{\sqrt{2\log N}}dx_2\log\left(N\frac{e^{-\frac{x_2^2}{2}}}{\sqrt{2\pi}}\right)=\frac{4\sqrt{2}}{3}\frac{(\log N)^{\frac{3}{2}}}{N}+\frac{{\cal O}(1)}{N}
$$
once omitted the factor $\frac{1}{2\pi}$ or $\frac{1}{4\pi}$.

To make this rigorous we substitute again $\rho$ with a density (that we will call $\rho_N$) whose support is compact but increases with $N$. To define $\rho_N$, first we define $r_N$ and $\tilde r_N$ as
$$
r_N:=\sqrt{2\log\left(\frac{N}{(\log N)^{\alpha}}\right)},\qquad1<\alpha<2,\qquad\tilde r_N:=\frac{\lfloor m\lfloor r_N\rfloor r_N\rfloor+1}{m\lfloor r_N\rfloor}
$$
We define $\tilde r_N$ in this way because when multiplying for $m\lfloor r_N\rfloor$ we obtain an integer number and therefore the set can easily covered by an integer number of squares, while $\frac{\tilde r_N}{r_N}\xrightarrow[N\to\infty]{}1$. Moreover, $m$ will be sent to $\infty$ in the end.

Then we apply the cut-off:
\begin{eqnarray*}
\rho_N:=\frac{\rho\mathbbm{1}_{(0,1)\times(-\tilde r_N,\tilde r_N)}}{\rho((0,1)\times(-\tilde r_N,\tilde r_N))}
\end{eqnarray*}
Finally we cover $(0,1)\times(-\tilde r_N,\tilde r_N)$ with squares $\{Q^j_k\}_{j,k}$ whose side is $\frac{1}{m\lfloor r_N\rfloor}$, and we also define the horizontal rectangles $\{R_k\}_{k}:=\{\cup_j Q^j_k\}_k$, which are aimed to be used in the analogous of the Proposition \ref{oksquares}.
\begin{figure}[h!]
\centering
\includegraphics[width=6cm]{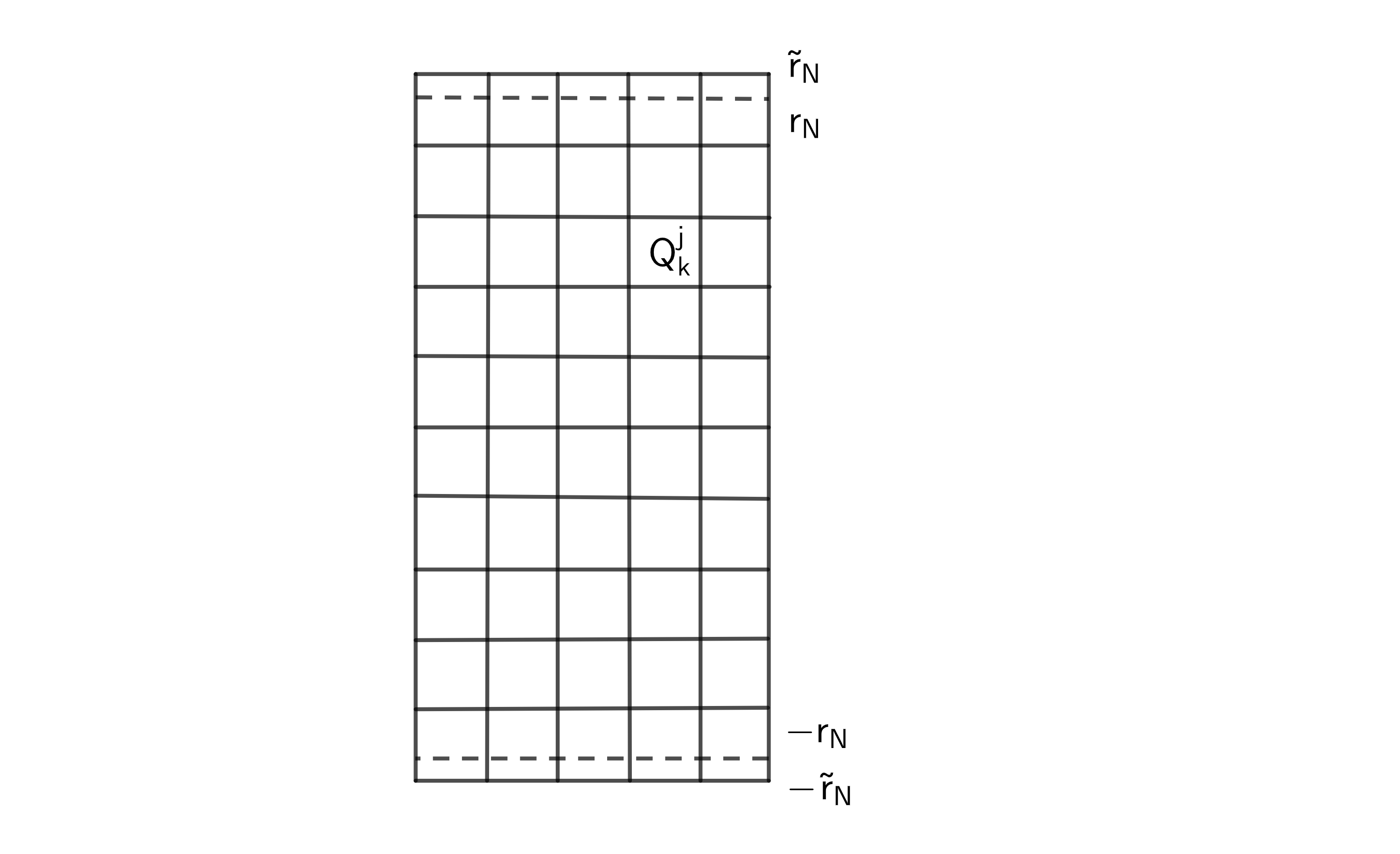}
\includegraphics[width=6cm]{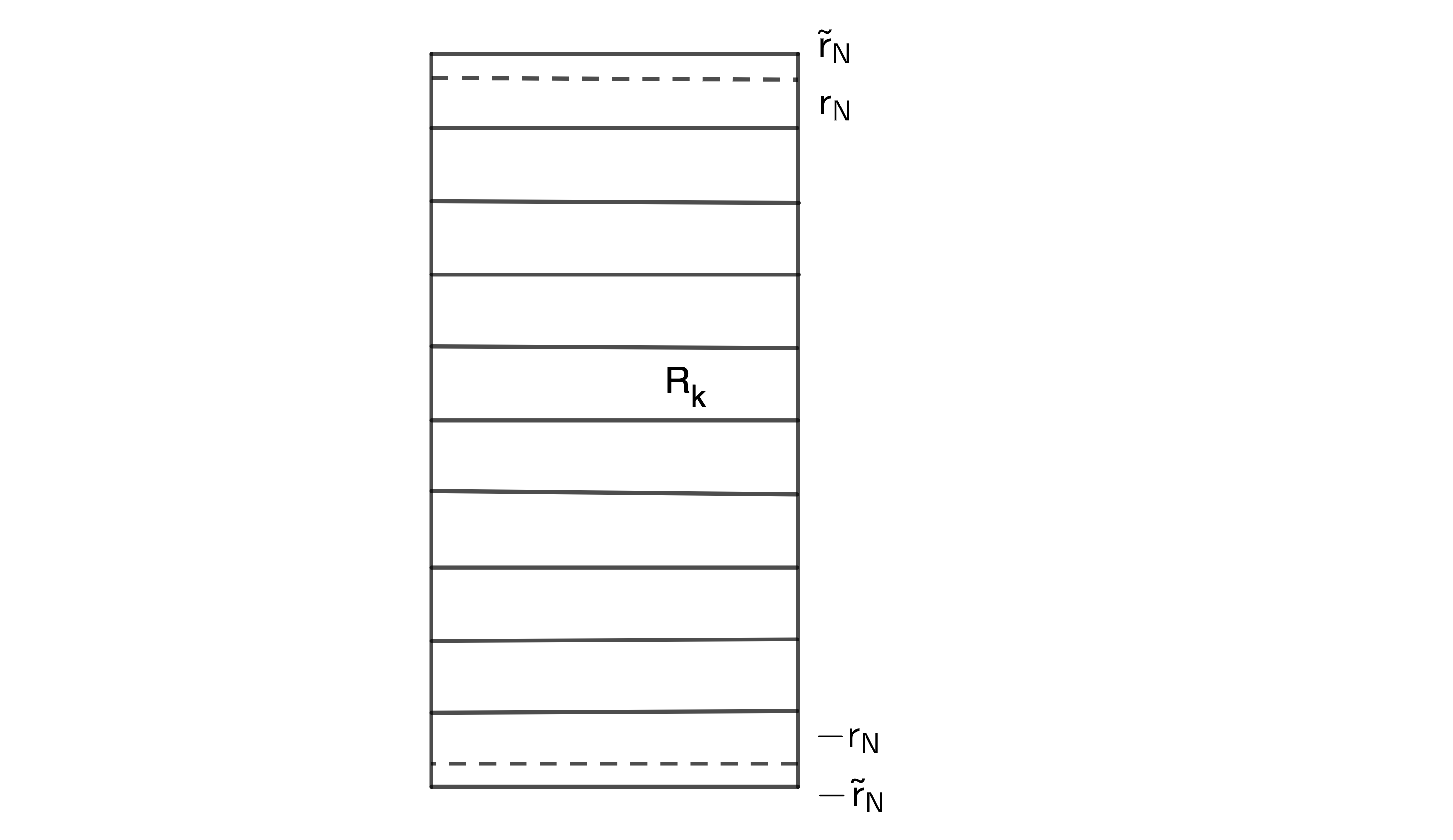}
\caption{On the left: the squares $\{Q^j_k\}_{j,k}$. On the right: the horizontal rectangles $\{R_k\}_k$.}
\end{figure}

Using this partition the arguments used are analogous to the Gaussian case, and in this way we can prove Theorem \ref{Maxthm}.

\section{Appendix}

\begin{lemma}\label{productproperty}
Let $\mu$ and $\lambda$ be two probability measures on $\mathbb{R}$ absolutely continuous with respect to Lebesgue measure, and let $\nu$ be any probability measure on $\mathbb{R}$. Then
$$
W_2^2\left(\mu\otimes\nu,\lambda\otimes\nu\right)\leq W_2^2\left(\mu,\lambda\right)
$$
\end{lemma}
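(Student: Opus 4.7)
The plan is to exhibit an explicit coupling between $\mu\otimes\nu$ and $\lambda\otimes\nu$ whose transport cost equals $W_2^2(\mu,\lambda)$, and then invoke the definition of $W_2^2$ as an infimum over couplings. The idea is to move mass only in the first coordinate, leaving the second coordinate untouched, and use an optimal one-dimensional coupling on the first factor.

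Concretely, I would first pick an optimal coupling $\pi$ between $\mu$ and $\lambda$, i.e.\ a probability measure on $\mathbb{R}\times\mathbb{R}$ with marginals $\mu$ and $\lambda$ such that $\int |x_1-y_1|^2\,d\pi(x_1,y_1)=W_2^2(\mu,\lambda)$; this exists by standard compactness in optimal transport (and since $\mu$ is absolutely continuous one could even take $\pi=(\mathrm{id},T)_\#\mu$ for the Brenier map $T$, but this is not needed). Then I would define a measure $\tilde J$ on $(\mathbb{R}^2)\times(\mathbb{R}^2)$ by
\begin{equation*}
\int \varphi\, d\tilde J \;=\; \int_{\mathbb{R}}\!\int_{\mathbb{R}\times\mathbb{R}} \varphi\bigl((x_1,z),(y_1,z)\bigr)\,d\pi(x_1,y_1)\,d\nu(z)
\end{equation*}
for every bounded continuous $\varphi$. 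Equivalently, $\tilde J$ is the pushforward of $\pi\otimes\nu$ under the map $(x_1,y_1,z)\mapsto\bigl((x_1,z),(y_1,z)\bigr)$.

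Next I would verify that $\tilde J$ is a coupling of $\mu\otimes\nu$ and $\lambda\otimes\nu$: plugging in $\varphi((x_1,z),\cdot)=\psi(x_1,z)$ gives first marginal $\int\psi\,d\mu\,d\nu=\mu\otimes\nu$, and plugging in $\varphi(\cdot,(y_1,z))=\psi(y_1,z)$ gives second marginal $\lambda\otimes\nu$. Then I would compute the cost
\begin{equation*}
\int \bigl|(x_1,z)-(y_1,z)\bigr|^2\,d\tilde J
\;=\;\int_{\mathbb{R}}\!\int_{\mathbb{R}\times\mathbb{R}} |x_1-y_1|^2\,d\pi(x_1,y_1)\,d\nu(z)
\;=\;W_2^2(\mu,\lambda),
\end{equation*}
where the $z$-integration is trivial because $\nu$ is a probability measure.

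Finally, since $W_2^2(\mu\otimes\nu,\lambda\otimes\nu)$ is by definition the infimum of the transport cost over all couplings, and $\tilde J$ is one such coupling, the inequality follows. There is no serious obstacle: the only thing to check carefully is the marginal computation, which is immediate from the construction. The absolute continuity of $\mu$ and $\lambda$ is not actually needed in this argument — it would only be required if one wanted to realize $\pi$ as the graph of a Monge map — so the conclusion in fact holds under the weaker assumption that $\mu$ and $\lambda$ are arbitrary probability measures with finite second moments.
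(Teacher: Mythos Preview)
Your proof is correct and follows essentially the same approach as the paper: both construct a coupling of $\mu\otimes\nu$ and $\lambda\otimes\nu$ that acts as the identity on the second coordinate and uses the optimal one-dimensional transport on the first, then compute the cost to be $W_2^2(\mu,\lambda)$. The paper uses the optimal Monge map $S$ (hence the absolute continuity hypothesis) and sets $T(x_1,x_2)=(S(x_1),x_2)$, whereas you work with the Kantorovich coupling $\pi$ directly; as you observe, this makes the absolute continuity assumption unnecessary.
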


\begin{proof}  If $S:\mathbb{R}\to\mathbb{R}$ is the optimal map that transports $\mu$ in $\lambda$, i.e.
\begin{eqnarray}
\label{productproperty1}\lambda(A)&=&\mu(S^{-1}(A))
\\
\label{productproperty2}W_2^2(\mu,\lambda)&=&\int_{\mathbb{R}}dz\mu(z)|z-S(z)|^2
\end{eqnarray}
the map $T:\mathbb{R}\times\mathbb{R}\ni(x_1,x_2)\mapsto(S(x_1),x_2)\in\mathbb{R}\times\mathbb{R}$ transports $\mu\otimes\nu$ in $\lambda\otimes\nu$, indeed using \eqref{productproperty1} we have
$$
\lambda\otimes\nu(A\times B)=\lambda(A)\nu(B)=\mu(S^{-1}(A))\nu(B)=\mu\otimes\nu(T^{-1}(A\times B))
$$
therefore, using \eqref{productproperty2}
\begin{eqnarray*}
W_2^2\left(\mu\otimes\nu,\lambda\otimes\nu\right)&\leq&\int_{\mathbb{R}^2}d\mu\otimes\nu(x)|x-T(x)|^2
\\
&=&\int_{\mathbb{R}}dx_1\mu(x_1)|x_1-S(x_1)|^2=W_2^2(\mu,\lambda)
\end{eqnarray*}

\end{proof}

\begin{lemma}[\cite{AGT}, Proof of Theorem 1.2, \textit{Step 3}]\label{forbipartitematching}
Let $\rho$ (also depending on $N$) be any of the probability densities we are considering and $\Omega\subseteq\mathbb{R}^2$ its support ($\Omega$ depending on $N$ too). Let $\{X_i\}_{i=1}^N$ and $\{Y_i\}_{i=1}^N$ be iid random variables with distribution $\rho$.

Let $\{\Omega_h\}_h$ be the partition of $\Omega$ we have considered. We denote by $N_h$ and $M_h$ respectively the number of $\{X_i\}_{i=1}^N$ and $\{Y_i\}_{i=1}^N$ in $\Omega_h$ and
$$
\mu^{N_h}:=\frac{1}{N_h}\sum_{i:X_i\in\Omega_h}\delta_{X_i}\qquad;\qquad\nu^{M_h}:=\frac{1}{M_h}\sum_{i:Y_i\in\Omega_h}\delta_{Y_i}
$$
For $\theta\in(0,1)$ let $A_{\theta}$ be the set
\begin{eqnarray*}
A_{\theta}:=\bigcap_h\left\{\begin{array}{lcr}|N_h-\mathbb{E}(N_h)|&\leq&\theta\mathbb{E}(N_h)\\|M_h-\mathbb{E}(M_h)|&\leq&\theta\mathbb{E}(M_h)\end{array}\right\}
\end{eqnarray*}
Then
\begin{eqnarray*}
&&\mathbb{E}\left[W_2^2\left(\sum_h\frac{N_h}{N}\nu^{M_h},\sum_k\frac{M_h}{M}\nu^{M_h}\right)\mathbbm{1}_{A_{\theta}}\right]
\\
&\leq&c\theta\frac{\sum_h|\Omega_h|\log\mathbb{E}(N_h)}{N}
\\
&+&\mathbb{E}\left[W_2^2\left(\sum_h\frac{3\theta M_h}{N}\frac{\rho\mathbbm{1}_{\Omega_h}}{\rho(\Omega_h)},\frac{M_h-N_h+3\theta M_h}{N}\frac{\rho\mathbbm{1}_{\Omega_h}}{\rho(\Omega_h)}\right)\mathbbm{1}_{A_{\theta}}\right]
\end{eqnarray*}
\end{lemma}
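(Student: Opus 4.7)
My plan is to reduce $W_2^2$ between the two empirical-weighted sums to $W_2^2(C,D)$ for their smooth counterparts, at the cost of an additive $O(\theta)$ fraction of the main term. Writing $A := \sum_h \frac{N_h}{N}\nu^{M_h}$ and $B := \sum_h \frac{M_h}{N}\nu^{M_h}$, I will introduce the empirical analogues of the measures appearing in the right-hand side of the lemma,
\[
C' := \sum_h \tfrac{3\theta M_h}{N}\nu^{M_h}, \qquad D' := \sum_h \tfrac{M_h - N_h + 3\theta M_h}{N}\nu^{M_h}.
\]
A direct computation gives $A - C' = B - D' = \sum_h \frac{N_h - 3\theta M_h}{N}\nu^{M_h}$. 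On $A_\theta$ both $N_h$ and $M_h$ lie in $[(1-\theta),(1+\theta)]\,\mathbb{E}(N_h)$, so the common coefficient obeys $N_h - 3\theta M_h \ge (1-4\theta-3\theta^2)\mathbb{E}(N_h) \ge 0$ for $\theta$ small enough, hence this common signed part is a genuine positive measure $E$. Writing $A = E + C'$ and $B = E + D'$, coupling $E$ with itself by the identity and $C'$ with $D'$ by an optimal plan yields the basic inequality
\[
W_2^2(A, B) \le W_2^2(C', D').
\]

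Next I will pass from $W_2^2(C', D')$ to $W_2^2(C, D)$ via a weighted triangle inequality through the smooth reference. For any $\beta>0$, the bound $(a+b+c)^2 \le (1+\beta)b^2 + \frac{2(1+\beta)}{\beta}(a^2+c^2)$ applied to $W_2(C',D') \le W_2(C',C) + W_2(C,D) + W_2(D',D)$ gives
\[
W_2^2(C', D') \le (1+\beta)\,W_2^2(C, D) + \tfrac{2(1+\beta)}{\beta}\bigl(W_2^2(C', C) + W_2^2(D', D)\bigr).
\]
Because the pairs $(C',C)$ and $(D',D)$ have matching cell-weights, convexity of $W_2^2$ yields
\[
W_2^2(C', C) \le \sum_h \tfrac{3\theta M_h}{N} W_2^2\!\Bigl(\nu^{M_h},\tfrac{\rho\mathbbm{1}_{\Omega_h}}{\rho(\Omega_h)}\Bigr), \qquad W_2^2(D', D) \le \sum_h \tfrac{M_h - N_h + 3\theta M_h}{N} W_2^2\!\Bigl(\nu^{M_h},\tfrac{\rho\mathbbm{1}_{\Omega_h}}{\rho(\Omega_h)}\Bigr).
\]
On $A_\theta$ each of these weights is bounded by $c\theta\,\mathbb{E}(N_h)/N$. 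Conditioning on $M_h$ and invoking Theorem \ref{teoremone} to get $\mathbb{E}\bigl[W_2^2(\nu^{M_h},\tfrac{\rho\mathbbm{1}_{\Omega_h}}{\rho(\Omega_h)})\,\big|\,M_h\bigr] \lesssim |\Omega_h|\log M_h/M_h$, followed by Jensen's inequality for $\log$, then produces
\[
\mathbb{E}\bigl[(W_2^2(C',C) + W_2^2(D',D))\,\mathbbm{1}_{A_\theta}\bigr] \le c\theta\,\tfrac{\sum_h |\Omega_h|\log\mathbb{E}(N_h)}{N}.
\]

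Finally, choosing $\beta = \theta$ so that $\tfrac{2(1+\beta)}{\beta}\cdot \theta = O(1)$ and combining the displays I obtain
\[
\mathbb{E}\bigl[W_2^2(A, B)\mathbbm{1}_{A_\theta}\bigr] \le (1+\theta)\,\mathbb{E}\bigl[W_2^2(C, D)\mathbbm{1}_{A_\theta}\bigr] + c\theta\,\tfrac{\sum_h |\Omega_h|\log\mathbb{E}(N_h)}{N},
\]
which is the claimed estimate. The stray $(1+\theta)$ in front of $\mathbb{E}[W_2^2(C,D)\mathbbm{1}_{A_\theta}]$ is harmless: one can absorb it into the additive $c\theta$-term using the a priori control $\mathbb{E}[W_2^2(C,D)\mathbbm{1}_{A_\theta}] = O\bigl(\sum_h|\Omega_h|\log\mathbb{E}(N_h)/N\bigr)$ supplied by Lemma \ref{okannuli} or Proposition \ref{oksquares}, or equivalently into the outer Young parameter used in Proposition \ref{lowermulti}. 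The main technical obstacle is the combination of the positivity check for $E$ on $A_\theta$ and the bookkeeping of $\theta$-factors that makes the smoothing correction genuinely subleading; both steps lean crucially on the two-sided deviation bound on $(N_h,M_h)$ built into the very definition of $A_\theta$.
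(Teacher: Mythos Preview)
Your overall strategy is sound and matches the argument the paper imports from \cite{AGT}: peel off the common positive part $E$ so that $W_2^2(A,B)\le W_2^2(C',D')$, then bridge from the empirical $C',D'$ to the smooth $C,D$ by convexity and the semidiscrete upper bound from Theorem~\ref{teoremone}. The positivity check $N_h-3\theta M_h\ge 0$ on $A_\theta$, the convexity step, and the conditioning on $M_h$ are all carried out correctly.

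There is, however, an arithmetic slip in the final parameter choice that breaks the conclusion you claim. With $\beta=\theta$ you correctly note that $\tfrac{2(1+\beta)}{\beta}\cdot\theta=2(1+\theta)=O(1)$, but then in the last display you write the additive error as $c\theta\,\tfrac{\sum_h|\Omega_h|\log\mathbb{E}(N_h)}{N}$. That does \emph{not} follow: the coefficient you actually obtain is $2c(1+\theta)$, i.e.\ of order $1$, not of order $\theta$. Since $\tfrac{\sum_h|\Omega_h|\log\mathbb{E}(N_h)}{N}$ is precisely the leading order of the matching cost, an $O(1)$ coefficient here makes the bound useless in Proposition~\ref{lowermulti} and Theorem~\ref{lowergaux}, where this whole quantity is afterwards divided by the outer Young parameter and must be $o(1)$ relative to the main term.

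The fix is simply to keep $\beta$ of order $1$ (for instance $\beta=1$). Then the penalty term keeps its factor $c\theta$, at the price of a harmless constant $(1+\beta)$ in front of $\mathbb{E}[W_2^2(C,D)\mathbbm{1}_{A_\theta}]$. This is nominally weaker than the lemma as literally stated (which has coefficient $1$), but since that term is itself controlled by Lemma~\ref{okannuli} or Proposition~\ref{oksquares} with bounds of strictly lower order, the extra constant is irrelevant for every application in the paper. Your closing remark about absorbing a stray $(1+\theta)$ is correct in spirit but aimed at the wrong piece: it is the loss of a $\theta$ in the \emph{additive} term, not the multiplicative factor on $W_2^2(C,D)$, that needs repair.
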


\newpage

\section*{Data availability statement} Data sharing not applicable to this article as no datasets were generated or analysed during the current study.
\section*{Conflict of interest}
The authors have no competing interests to declare that are relevant to the content of this article.

\end{document}